\documentclass{louloupart1}
\usepackage{newtxtext,newtxmath}
\usepackage{pinlabel}
\usepackage{graphicx}
\usepackage[all]{xy}

\title[Parabolic subgroups and word problem in virtual Artin groups]{Parabolic subgroups and word problem in virtual Artin groups}
\author[J. G\'alvez Mateos]{José G\'alvez Mateos}
\givenname{Jos\'e}
\surname{G\'alvez Mateos}
\address{Jos\'e G\'alvez Mateos, Instituto de Matem\'aticas de la Universidad de Sevilla (IMUS) and Departamento de \'Algebra, Universidad de Sevilla, Spain.}
\email{jgalvez7@us.es}
\author[F. Gavazzi]{Federica Gavazzi}
\givenname{Federica}
\surname{Gavazzi}
\address{Federica Gavazzi, Department of Mathematics, Heriot-Watt University and Maxwell Institute for Mathematical Sciences, Edinburgh, UK.}
\email{F.Gavazzi@hw.ac.uk}
\author[L. Paris]{Luis Paris}
\givenname{Luis}
\surname{Paris}
\address{Luis Paris, Universit\'e Bourgogne Europe, CNRS, IMB, UMR 5584, 21000 Dijon, France.}
\email{lparis@u-bourgogne.fr}

\newtheorem{thm}{Theorem}[section]
\newtheorem{lem}[thm]{Lemma}
\newtheorem{prop}[thm]{Proposition}
\newtheorem{corl}[thm]{Corollary}

\theoremstyle{definition}
\newtheorem{rem}[thm]{Remark}
\newtheorem{expl}[thm]{Example}

\theoremstyle{definition2}
\newtheorem*{acknow}{Acknowledgments}
\newtheorem*{defin}{Definition}

\numberwithin{equation}{section}

\makeatletter
\renewcommand{\thefigure}{\ifnum \c@section>\z@ \thesection.\fi
 \@arabic\c@figure}
\@addtoreset{figure}{section}
\makeatother

\begin{document}

\def\N{\mathbb N} \def\VA{{\rm VA}} \def\SS{\mathcal S}
\def\TT{\mathcal T} \def\KVA{{\rm KVA}} \def\PVA{{\rm PVA}}
\def\id{{\rm id}} \def\R{\mathbb R} \def\GL{{\rm GL}}
\def\RR{\mathcal R} \def\supp{{\rm supp}} \def\dpt{{\rm dpt}}
\def\XX{\mathcal X} \def\K{\mathbb K} \def\AA{\mathcal A}
\def\YY{\mathcal Y}


\begin{abstract}
We begin by establishing two fundamental results on standard parabolic subgroups of virtual Artin groups. 
We first show that a standard parabolic subgroup is naturally isomorphic to a virtual Artin group.
Second, we prove that the intersection of two standard parabolic subgroups is a standard parabolic subgroup.
Our main result is that, if all free of infinity standard parabolic subgroups of a given virtual Artin group $\VA [\Gamma]$ have a solvable word problem, then $\VA [\Gamma]$ itself has a solvable word problem.
It follows that virtual Artin groups of FC type and, more generally, of affine-FC type, have a solvable word problem.
We also prove that, if a virtual Artin group $\VA [\Gamma]$ has a solvable word problem, then the strong membership problem for any standard parabolic subgroup in $\VA [\Gamma]$ is solvable.

\smallskip\noindent
{\bf AMS Subject Classification.\ \ } 
Primary: 20F36.

\smallskip\noindent
{\bf Keywords.\ \ } 
Virtual Artin groups, Parabolic subgroups, Word problem, Membership problem.

\end{abstract}

\maketitle


\section{Introduction}\label{sec1}

Let $S$ be a finite set.
A \emph{Coxeter matrix} over $S$ is a symmetric square matrix $M = (m_{s,t})_{s,t \in S}$ indexed by the elements of $S$, with coefficients in $\N_{\ge 1} \cup \{ \infty\}$, such that $m_{s,s}=1$ for all $s \in S$ and $m_{s,t} = m_{t,s} \ge 2$ for all $s, t \in S$ such that $s \neq t$.
Such a matrix is typically represented by a labeled graph $\Gamma$, called a \emph{Coxeter graph}, which is defined as follows.
The set of vertices of $\Gamma$ is $S$, two distinct vertices $s,t \in S$ are connected by an edge if $m_{s,t} \ge 3$, and an edge $\{s,t\}$ is labeled with $m_{s,t}$ if $m_{s,t} \ge 4$.
From now on we fix a Coxeter graph $\Gamma$ and we denote by $M=(m_{s,t})_{s,t \in S}$ its associated Coxeter matrix.

For two letters $a$ and $b$ and an integer $m \ge 2$, we denote by $\Pi(a,b,m)$ the alternating product $aba \cdots$ of length $m$.
In other words, $\Pi(a, b, m) = (ab)^{\frac{m}{2}}$ if $m$ is even, and $\Pi(a,b,m) = (ab)^{\frac{m-1}{2}} a$ if $m$ is odd.
We associate with the Coxeter graph $\Gamma$ an \emph{Artin group}, denoted by $A[\Gamma]$, and a \emph{Coxeter group}, denoted by $W[\Gamma]$.
The Artin group is defined by the following presentation:
\[ 
A [\Gamma] = \langle S \mid \Pi (s, t, m_{s,t}) = \Pi (t, s, m_{s,t}) \text{ for } s,t \in S \text{ such that } s \neq t \text{ and } m_{s,t} \neq \infty \rangle \,.
\]
The Coxeter group $W[\Gamma]$ is the quotient of $A [\Gamma]$ by the relations $s^2=1$, for all $s \in S$.

Coxeter groups were introduced by Tits \cite{Tit13} in a preprint written in the late 1950s but only published in 2013.
This preprint served as the basis for Bourbaki's seminal book on the theory of Coxeter groups \cite{Bou68}.
Despite a good understanding of these groups, they remain an active area of research. 
On the other hand, Artin groups were introduced by Tits in \cite{Tit66} as extensions of Coxeter groups, but their study really began in the 1970s with the work of Brieskorn \cite{Bri71, Bri73}, Brieskorn--Saito \cite{BS72}, and Deligne \cite{Del72}. 
Unlike Coxeter groups, for which many fundamental results are known, Artin groups remain partially understood and they are currently the subject of a flourishing field of study, particularly in geometric group theory.

In \cite{BPT23}, the authors associate with the Coxeter graph $\Gamma$ another group, called the \emph{virtual Artin group} of $\Gamma$ and denoted by $\VA[\Gamma]$. 
Following the model of virtual braid groups introduced by Kauffman in \cite{Kau99}, the underlying idea of its definition is to relate a copy of the Artin group $A[\Gamma]$ to a copy of the Coxeter group $W[\Gamma]$ using mixed relations that mimic the action of $W[\Gamma]$ on its root system.
Its formal definition is as follows.

\begin{defin}
Define the following sets in one-to-one correspondence with $S$:
\[
\SS = \{ \sigma_s \mid s \in S\}\,, \quad \TT = \{ \tau_s \mid s \in S \}\,.
\]
The \emph{virtual Artin group} of $\Gamma$, denoted by $\VA [\Gamma]$, is the group defined by the presentation with generating set $\SS \sqcup \TT$ and the following relations:
\begin{itemize}
\item[(v1)]
$\Pi (\sigma_s, \sigma_t, m_{s,t}) = \Pi (\sigma_t, \sigma_s, m_{s,t})$ for all $s,t \in S$ such that $s \neq t$ and $m_{s,t} \neq \infty$;
\item[(v2)]
$\tau_s^2 = 1$ for all $s \in S$; $\Pi(\tau_s, \tau_t, m_{s,t}) = \Pi(\tau_t, \tau_s, m_{s,t})$ for all $s,t \in S$ such that $s \neq t$ and $m_{s,t} \neq \infty$;
\item[(v3)]
$\sigma_s \, \Pi(\tau_t, \tau_s, m_{s,t}-1) = \Pi(\tau_t, \tau_s, m_{s,t}-1) \, \sigma_r$, where $r=s$ if $m_{s,t}$ is even and $r=t$ if $m_{s,t}$ is odd, for all $s,t \in S$ such that $s \neq t$ and $m_{s,t} \neq \infty$.
\end{itemize}
\end{defin}

Let $X \subseteq S$.
We denote by $\Gamma_X$ the full Coxeter subgraph of $\Gamma$ spanned by $X$, by $W_X [\Gamma]$ the subgroup of $W [\Gamma]$ generated by $X$, and by $A_X [\Gamma]$ the subgroup of $A [\Gamma]$ generated by $X$.
The subgroup $W_X [\Gamma]$ (resp. $A_X [\Gamma]$) is called a \emph{standard parabolic subgroup} of $W [\Gamma]$ (resp. $A [\Gamma]$).
These subgroups play a crucial role in the theory of both Coxeter groups and Artin groups.
It is known from \cite{Bou68} that the natural homomorphism $W[\Gamma_X] \to W_X[\Gamma]$ which maps $x$ to $x$ for all $x \in X$ is an isomorphism.
Similarly, it is known from \cite{vLek83} that the natural homomorphism $A[\Gamma_X] \to A_X[\Gamma]$ which maps $x$ to $x$ for all $x \in X$ is an isomorphism.
The first step in our study is to establish a similar result for virtual Artin groups.

\begin{defin}
Let $X \subseteq S$.
Set $\SS_X = \{ \sigma_x \mid x \in X\}$ and $\TT_X = \{ \tau_x \mid x \in X\}$.
The subgroup of $\VA [\Gamma]$ generated by $\SS_X \sqcup \TT_X$ is denoted by $\VA_X [\Gamma]$ and is called a \emph{standard parabolic subgroup} of $\VA [\Gamma]$.
\end{defin}

\begin{thm}\label{thm1_1}
Let $X \subseteq S$.
The natural homomorphism $\VA [\Gamma_X] \to \VA_X [\Gamma]$ that maps $\sigma_x$ to $\sigma_x$ and $\tau_x$ to $\tau_x$ for all $x \in X$ is an isomorphism.
\end{thm}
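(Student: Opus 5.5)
Surjectivity is immediate, since $\VA_X[\Gamma]$ is by definition generated by $\SS_X \sqcup \TT_X$. The homomorphism is well defined because every defining relation of $\VA[\Gamma_X]$ is one of the relations (v1)--(v3) of $\VA[\Gamma]$. The whole content is therefore injectivity. I would prove it by constructing a retraction $\pi_X \colon \VA[\Gamma] \to \VA[\Gamma_X]$ whose composition with the natural map is the identity on $\VA[\Gamma_X]$.

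A naive retraction that kills $\sigma_s,\tau_s$ for $s \notin X$ fails. Relation (v3) with $s \in X$, $t \notin X$ and $m_{s,t}$ odd would force $\sigma_s = \sigma_t$, and $\sigma_t$ is being sent to $1$. So I would pass through the standard semidirect product decomposition instead.

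\textbf{Step 1.} Consider the homomorphism $\VA[\Gamma] \to W[\Gamma]$ given by $\sigma_s \mapsto 1$ and $\tau_s \mapsto s$. It is split by $s \mapsto \tau_s$, so $\VA[\Gamma] = \KVA[\Gamma] \rtimes W[\Gamma]$, where $\KVA[\Gamma]$ is the kernel. Via Reidemeister--Schreier, $\KVA[\Gamma]$ is generated by the conjugates $w\sigma_s w^{-1}$. Relation (v3) says precisely that these depend only on the root $\beta = w(\alpha_s)$, which gives generators $\delta_\beta$ indexed by the positive roots of $\Phi^+$ (up to the sign convention). The relations are the $W$-translates of (v1), namely Artin-type relations among the pairs $\delta_\beta,\delta_\gamma$ whose roots form a $W$-image of a pair of simple roots $\alpha_s,\alpha_t$. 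So $\KVA[\Gamma]$ is an Artin group on the generators $\delta_\beta$, $\beta \in \Phi^+$. The same description holds for $\Gamma_X$, with root system $\Phi_X$.

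\textbf{Step 2.} Define $\pi_X$ on $\KVA[\Gamma] \rtimes W[\Gamma]$ as follows. On $W[\Gamma]$, use the retraction $W[\Gamma] \to W[\Gamma_X]$ built from the unique factorization $w = u v$ with $u \in W_X$ and $v$ an $X$-reduced coset representative. The difficulty is that this is not a homomorphism. The cleaner route is therefore to argue directly for injectivity on each factor:
\[
\KVA[\Gamma_X] \to \KVA[\Gamma] \quad\text{and}\quad W[\Gamma_X] \to W[\Gamma].
\]
The second map is injective by Bourbaki \cite{Bou68}. For the first, note that $\Phi_X^+ \subseteq \Phi^+$. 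I would then define a retraction $\KVA[\Gamma] \to \KVA[\Gamma_X]$ sending $\delta_\beta \mapsto \delta_\beta$ for $\beta \in \Phi_X^+$ and $\delta_\beta \mapsto 1$ otherwise. The check is that each defining relation maps to a relation or a trivial one. If both roots $\beta,\gamma$ lie in $\Phi_X$, the relation is a $W$-translate of a simple-pair relation, and I must show it is already a $W_X$-translate inside $\Gamma_X$. If exactly one root lies in $\Phi_X$ and $m$ is odd, the relation forces the two generators to coincide, which would force the generator indexed by the root in $\Phi_X$ to be trivial. I must rule this case out, for instance because the rank-two subsystem spanned by $\beta$ and $\gamma$ is then not contained in $\Phi_X$, so $\beta$ and $\gamma$ cannot both be in the $\{\beta,\gamma\}$-subsystem in the required configuration.

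\textbf{Step 3.} A five-lemma-type argument on the compatible semidirect products shows that $\VA[\Gamma_X] = \KVA[\Gamma_X] \rtimes W[\Gamma_X] \to \KVA[\Gamma] \rtimes W[\Gamma]$ is injective, since it is injective on both factors and equivariant.

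The main obstacle is Step 2: verifying that the kernel retraction respects all relations. This needs root-system facts, specifically that a rank-two reflection subgroup whose simple roots both lie in $\Phi_X$ is conjugate within $W_X$ to a standard one, and that the mixed odd case cannot occur. I would first try to prove these using Dyer's theory of reflection subgroups, in the form that the canonical simple roots of a reflection subgroup contained in $W_X$ lie in $\Phi_X$.
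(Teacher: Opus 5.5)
Your Steps 1 and 3 match the paper: the two split exact sequences $1\to\KVA\to\VA\to W\to 1$, the identification of $\KVA[\Gamma]$ with the Artin group $A[\hat\Gamma]$ from \cite{BPT23}, and the five lemma. However, Step 2 fails as written. The map $A[\hat\Gamma]\to A[\widehat{\Gamma_X}]$ sending $\delta_\beta\mapsto\delta_\beta$ for $\beta\in\Phi[\Gamma_X]$ and $\delta_\beta\mapsto 1$ otherwise is not a homomorphism. The ``mixed odd case'' you hope to rule out occurs whenever some $s\in X$ and $t\notin X$ have $m_{s,t}=m$ odd. Take $w=1$: then $\beta=\alpha_s\in\Phi[\Gamma_X]$, $\gamma=\alpha_t\notin\Phi[\Gamma_X]$, and $\hat m_{\beta,\gamma}=m$. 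The relation $\Pi(\delta_\beta,\delta_\gamma,m)=\Pi(\delta_\gamma,\delta_\beta,m)$ is sent to $\delta_\beta^{(m+1)/2}=\delta_\beta^{(m-1)/2}$, i.e.\ $\delta_{\alpha_s}=1$, which is false in $A[\widehat{\Gamma_X}]$. This already happens for $\Gamma=A_2$, $X=\{s\}$. It is the same obstruction you noticed for $\VA[\Gamma]$ itself, one level down. Killing the generators outside a subset never gives a homomorphic retraction of an Artin group onto a standard parabolic subgroup across an odd edge, so no argument about rank-two subsystems can rescue this step.

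The missing ingredient is van der Lek's theorem (Proposition~\ref{prop2_11}): for any Coxeter graph $\Omega$ and any set $Y$ of its vertices, the natural map $A[\Omega_Y]\to A[\Omega]$ is injective. It extends to the infinitely generated $\hat\Gamma$ via direct limits over finite subsets. In the paper it is proved using the set-theoretic retraction $\pi_X$ of Theorem~\ref{thm2_9}, which is \emph{not} a homomorphism (Remark~\ref{rem2_10}). Apply it with $\Omega=\hat\Gamma$ and $Y=\Phi[\Gamma_X]$. Step 2 then reduces to showing that $\widehat{\Gamma_X}$ equals the full subgraph $\hat\Gamma_{\Phi[\Gamma_X]}$, i.e.\ $\hat m^X_{\beta,\gamma}=\hat m_{\beta,\gamma}$ for all $\beta,\gamma\in\Phi[\Gamma_X]$ (Lemma~\ref{lem3_8}). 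The nontrivial direction is the one you did identify. If $\beta=w(\alpha_s)$ and $\gamma=w(\alpha_t)$ lie in $\Phi[\Gamma_X]$ with $w\in W[\Gamma]$, one can replace $w$ by some $u\in W[\Gamma_X]$ and $s,t$ by some $x,y\in X$. The paper does this with the minimal element $w_0$ of $W[\Gamma_X]\,w\,W[\Gamma_{\{s,t\}}]$ (Lemmas~\ref{lem3_4} and~\ref{lem3_7}). This $w_0$ conjugates $\{s,t\}$ into $X$ and sends $\alpha_s,\alpha_t$ to positive, hence simple, roots. The labels then agree because $m_{s,t}$ and $m_{x,y}$ both equal the order of $r_\beta r_\gamma$. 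With van der Lek's theorem replacing your retraction, your outline becomes the paper's proof. One minor correction to Step 1: the generators $\delta_\beta$ of $\KVA[\Gamma]$ are indexed by all of $\Phi[\Gamma]$, not only $\Phi^+[\Gamma]$. In fact $\hat m_{\beta,-\beta}=\infty$, so $\delta_\beta\neq\delta_{-\beta}$.
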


Another classical result for Coxeter groups (see \cite{Bou68}) and Artin groups (see \cite{vLek83}) states that, for any $X, Y \subseteq S$, we have $W_X [\Gamma] \cap W_Y [\Gamma] = W_{X \cap Y} [\Gamma]$ and $A_X [\Gamma] \cap A_Y [\Gamma] = A_{X \cap Y} [\Gamma]$.
We also establish a similar result for virtual Artin groups.

\begin{thm}\label{thm1_2}
Let $X,Y \subseteq S$.
Then $\VA_X [\Gamma] \cap \VA_Y [\Gamma] = \VA_{X \cap Y} [\Gamma]$.
\end{thm}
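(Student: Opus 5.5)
The plan is to use the semidirect product decomposition of $\VA[\Gamma]$ from \cite{BPT23}. Let $\Phi$ be the root system of $W[\Gamma]$. Mapping $\sigma_s \mapsto s$ and $\tau_s \mapsto s$ gives an epimorphism $\VA[\Gamma] \to W[\Gamma]$; it has a section $s \mapsto \tau_s$. Its kernel $\KVA[\Gamma]$ is an Artin group generated by elements $\delta_\beta$ indexed by the roots $\beta$. Concretely, $\delta_{w(\alpha_s)} = \tau_w \sigma_s \tau_w^{-1}$ for $w \in W[\Gamma]$, where $\tau_w$ denotes the image of $w$ under the section. Its relations come from the dihedral relations (v1) transported by $W$. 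Thus
\[ \VA[\Gamma] = \KVA[\Gamma] \rtimes W[\Gamma], \]
and $W$ acts on the generators $\delta_\beta$ by permuting the roots. I will also use that the roots relevant to $X$ form the subsystem $\Phi_X = W_X\cdot\{\alpha_x : x\in X\}$, which is a union of $W_X$-orbits.

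First I would check the compatible decomposition of $\VA_X[\Gamma]$. An element $g \in \VA_X[\Gamma]$ writes uniquely as $g = k\,\tau_w$ with $k \in \KVA[\Gamma]$ and $w\in W[\Gamma]$. Since $g$ is a word in $\SS_X \sqcup \TT_X$, its image $w$ lies in $W_X[\Gamma]$. Moreover, pushing the $\tau$'s to the right shows that $k$ lies in the subgroup $K_X$ generated by $\{\delta_\beta : \beta \in \Phi_X\}$. Conversely, any such $k\tau_w$ lies in $\VA_X[\Gamma]$. Hence
\[ \VA_X[\Gamma] = K_X \rtimes W_X[\Gamma]. \]
This should essentially be the content underlying Theorem~\ref{thm1_1}, which I take as given along with its proof machinery.

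Now take $g = k\tau_w \in \VA_X[\Gamma] \cap \VA_Y[\Gamma]$. By uniqueness of the decomposition, $w \in W_X \cap W_Y = W_{X\cap Y}$ by \cite{Bou68}, and $k \in K_X \cap K_Y$. It remains to show
\[ K_X \cap K_Y = K_{X\cap Y}. \]
The key point is that $\KVA[\Gamma]$ is an Artin group on the generators $\delta_\beta$, $\beta\in\Phi$ (result of \cite{BPT23}). Each $K_Z$ is then the standard parabolic subgroup of $\KVA[\Gamma]$ generated by $\{\delta_\beta : \beta \in \Phi_Z\}$. By van der Lek's theorem \cite{vLek83}, applied to this possibly infinitely generated Artin group (by a direct limit argument over finite subsets), we get
\[ K_X \cap K_Y = K_{\Phi_X \cap \Phi_Y}. \]
So the problem reduces to the root-system statement $\Phi_X \cap \Phi_Y = \Phi_{X\cap Y}$. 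This holds because a root in $\Phi_X$ is a linear combination of the simple roots $\alpha_x$, $x\in X$; the simple roots are linearly independent; and a root supported in $X\cap Y$ lies in $\Phi_{X\cap Y}$ (standard, \cite{Bou68}).

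The step I expect to be the main obstacle is justifying that $K_X$ is exactly the standard parabolic subgroup of the Artin group $\KVA[\Gamma]$ on $\Phi_X$, and that van der Lek's result applies when the Coxeter graph of $\KVA[\Gamma]$ is infinite. For the latter, any given element lives in a finitely generated standard parabolic subgroup. Intersections can then be computed within a finite sub-Artin-group, using that standard parabolic subgroups of Artin groups on finite sets embed naturally.
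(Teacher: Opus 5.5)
Your argument is essentially the paper's proof. You decompose $g=k\,\iota_W(w)$ in $\VA[\Gamma]=\KVA[\Gamma]\rtimes W[\Gamma]$, get $w\in W[\Gamma_{X\cap Y}]$ from the Coxeter case, and apply van der Lek's intersection theorem inside $\KVA[\Gamma]=A[\hat\Gamma]$ (extended to infinitely many generators by a direct limit). You then conclude with $\Phi[\Gamma_X]\cap\Phi[\Gamma_Y]=\Phi[\Gamma_{X\cap Y}]$.

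One correction is needed: the projection must be $\pi_K$, with $\sigma_s\mapsto 1$ and $\tau_s\mapsto s$. The map $\sigma_s\mapsto s$, $\tau_s\mapsto s$ that you wrote is also a well-defined epimorphism with section $s\mapsto\tau_s$, but its kernel is the pure virtual Artin group, not $\KVA[\Gamma]$. For instance, it does not contain $\delta_{w(\alpha_s)}=\tau_w\sigma_s\tau_w^{-1}$, which it sends to $wsw^{-1}\neq 1$. With that map, $\sigma_x$ would decompose as $(\sigma_x\tau_x)\,\tau_x$, and the first factor is not a product of $\delta_\beta$'s, so van der Lek's theorem could not be applied. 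Once $\pi_K$ is used, everything you wrote afterwards is consistent and correct, including the formula for $\delta_\beta$ and the rewriting by pushing the $\tau$'s to the right.

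Two minor remarks. First, your rewriting argument shows directly that $\VA_X[\Gamma]\cap\KVA[\Gamma]$ is the subgroup of $A[\hat\Gamma]$ generated by $\{\delta_\beta\mid\beta\in\Phi[\Gamma_X]\}$. That is exactly a standard parabolic subgroup, which is all van der Lek's theorem requires. The paper instead passes through $\KVA[\Gamma_X]$ via the identification of Theorem \ref{thm1_1} and Lemma \ref{lem3_8}. Your route needs neither, so it is slightly more economical for this statement.

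Second, the claim that a root lying in $V_{X\cap Y}$ belongs to $\Phi[\Gamma_{X\cap Y}]$ is the paper's Lemma \ref{lem3_5}. Since $\Phi[\Gamma]$ may be infinite, this is not the classical statement about finite root systems, so citing Bourbaki is not enough. You should supply an argument, such as the paper's induction on depth using the Brink--Howlett Lemma \ref{lem3_2}, or a precise reference such as Qi's note.
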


\begin{rem}\label{rem1_3}
A \emph{parabolic subgroup} of $W[\Gamma]$ is defined to be a conjugate of a standard parabolic subgroup of $W [\Gamma]$.
Similarly, a \emph{parabolic subgroup} of $A[\Gamma]$ is defined to be a conjugate of a standard parabolic subgroup of $A[\Gamma]$, and a \emph{parabolic subgroup} of $\VA[\Gamma]$ is defined to be a conjugate of a standard parabolic subgroup of $\VA[\Gamma]$.
It is known from \cite{Sol76} that the intersection of two parabolic subgroups of $W[\Gamma]$ is itself a parabolic subgroup of $W[\Gamma]$.
It is conjectured that the same result holds for Artin groups.
This conjecture has been proven in numerous cases (see \cite{CGGW19, Mor21, AF22, Blu22, CMV23}) and it is a hot topic in this area of research.
However, such a result does not hold for virtual Artin groups.
A relatively simple counterexample is presented at the end of Section \ref{sec3} (see Example \ref{expl3_9}).
\end{rem}

A subset $X \subseteq S$ is called \emph{free of infinity} if, for all $s,t \in X$, we have $m_{s,t} \neq \infty$.
Artin groups often satisfy the principle that, if a property holds for all subgroups $A_X [\Gamma] \simeq A [\Gamma_X]$ with $X$ free of infinity, then it holds for $A [\Gamma]$ itself (see \cite{GP12}).
The same principle is expected to apply to virtual Artin groups.
The main result of this paper is that this principle holds for the solvability of the word problem. 

\begin{thm}\label{thm1_4}
If the word problem in $\VA_X [\Gamma] \simeq \VA [\Gamma_X]$ is solvable for every subset $X \subseteq S$ that is free of infinity, then the word problem in $\VA [\Gamma]$ is solvable.
\end{thm}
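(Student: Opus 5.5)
We will follow the strategy used for Artin groups in the free-of-infinity reduction. We split $\VA[\Gamma]$ as a semidirect product $\KVA[\Gamma] \rtimes W[\Gamma]$, where $W[\Gamma]$ is embedded via $\tau_s \mapsto \tau_s$. Here $\KVA[\Gamma]$ is the kernel of the retraction $\VA[\Gamma] \to W[\Gamma]$ given by $\sigma_s \mapsto s$ and $\tau_s \mapsto s$, or more conveniently by $\sigma_s \mapsto 1$ and $\tau_s \mapsto s$. Relation (v3) is compatible with the second choice. The kernel $\KVA[\Gamma]$ is generated by the conjugates $\delta_{w,s} = w\sigma_s w^{-1}$, which should be indexed by the root system $\Phi$ of $W[\Gamma]$. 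Via Reidemeister--Schreier, we expect a presentation of $\KVA[\Gamma]$ whose generators are $\delta_\beta$ for roots $\beta \in \Phi$ and whose relations are the $W$-translates of the dihedral relations (v1). Each such relation involves only roots lying in $w\Phi_{\{s,t\}}$ with $m_{s,t}\neq\infty$. Since $W[\Gamma]$ has solvable word problem and its action on roots is computable (Tits' geometric representation), the word problem in $\VA[\Gamma]$ reduces to that of $\KVA[\Gamma]$.

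Next, we identify $\KVA[\Gamma]$ as the fundamental group of a complex of groups, or equivalently an iterated amalgamated product, over the poset of "free of infinity" pieces. For each $w \in W$ and each free of infinity $X$, let $K_{w,X}$ be the subgroup generated by $\delta_\beta$ with $\beta \in w\Phi_X$. By Theorem \ref{thm1_1} applied inside $\VA[\Gamma_X]$, this is a copy of $\KVA[\Gamma_X]$. Because every defining relation lives in some $K_{w,\{s,t\}}$, $\KVA[\Gamma]$ is the colimit of the diagram of groups $K_{w,X}$, ordered by inclusion of the root sets $w\Phi_X$. This colimit is the same kind of construction as in Godelle--Paris, where $A[\Gamma]$ is recovered from its free-of-infinity parabolics. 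The plan is to show this diagram is "developable" in the sense that the natural maps are injective and intersections behave well. Theorem \ref{thm1_2}, transported to kernels, gives $K_{w,X}\cap K_{w,Y}=K_{w,X\cap Y}$ when they share a coset. We would then realize $\KVA[\Gamma]$, or $\VA[\Gamma]$ itself, as acting on a simply connected nonpositively curved, or at least tree-like, complex: a Deligne-type complex built from cosets $g\VA_X[\Gamma]$ with $X$ free of infinity.

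For solvability, we argue by induction on $|S|$. If $S$ is free of infinity, the statement is the hypothesis. Otherwise, pick $s,t$ with $m_{s,t}=\infty$. Then $\VA[\Gamma]$ should split as an amalgamated product $\VA_{S\setminus\{s\}}[\Gamma] *_{\VA_{S\setminus\{s,t\}}[\Gamma]} \VA_{S\setminus\{t\}}[\Gamma]$. This is immediate from the presentation, since every relation of type (v1)--(v3) involves a pair $\{u,v\}$ with $m_{u,v}\neq\infty$, so never both $s$ and $t$. By Theorem \ref{thm1_1} the factors are $\VA[\Gamma_{S\setminus\{s\}}]$ and so on, so this really is a presentation of an amalgam of those groups over $\VA[\Gamma_{S\setminus\{s,t\}}]$. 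By induction, the three groups have solvable word problem. An amalgam $A*_C B$ has solvable word problem provided $A$ and $B$ do and the membership problem for $C$ in $A$ and in $B$ is solvable: one reduces words via Britton's lemma/normal forms.

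The main obstacle is therefore the membership problem for $\VA_{S\setminus\{s,t\}}[\Gamma]$ inside $\VA_{S\setminus\{s\}}[\Gamma]$, given only solvable word problems. This is presumably where the paper's final claim comes in, that solvable word problem implies solvable strong membership for standard parabolics. We would prove that first, as an independent result. Using the semidirect decomposition, $g=k\,w$ with $w\in W$ and $k\in\KVA$ lies in $\VA_Y$ iff $w\in W_Y$ (decidable) and $k\in\KVA_Y$. For the latter, we would look for a retraction $\VA[\Gamma]\to\VA_Y[\Gamma]$, or at least a computable map on $\KVA$ sending $\delta_\beta$ to $\delta_\beta$ if $\beta\in\Phi_Y$ and to $1$ otherwise, checking it respects the translated dihedral relations. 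This check is the delicate part, since a dihedral root subsystem $w\Phi_{\{u,v\}}$ may meet $\Phi_Y$ in exactly one root. Then $k\in\KVA_Y$ iff $k=\rho(k)$, decidable by the word problem. Combining this with the amalgam induction gives the theorem.
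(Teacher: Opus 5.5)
\textbf{There is a genuine gap, in the membership step.}

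\textbf{What is correct.} Your outer structure matches the paper's proof. You induct, split $\VA[\Gamma]=\VA_{S\setminus\{s\}}[\Gamma]*_{\VA_{S\setminus\{s,t\}}[\Gamma]}\VA_{S\setminus\{t\}}[\Gamma]$ using Theorem~\ref{thm1_1}, and get the word problem in the amalgam from the word problem in the factors plus membership for the edge group. Plain membership suffices: once you know $\overline{\omega}$ lies in the edge group, you can enumerate words over its generators and compare them using the word problem in the factor. Your reduction, via $g=k\,\iota_W(w)$, to deciding whether $k\in\KVA[\Gamma]$ lies in $\KVA[\Gamma_Y]$ is also what the paper does in Theorem~\ref{thm1_6}. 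Your second paragraph (complexes of groups, Deligne-type complexes) is never used and can be dropped.

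\textbf{Where it fails.} The map you propose, $\delta_\beta\mapsto\delta_\beta$ for $\beta\in\Phi[\Gamma_Y]$ and $\delta_\beta\mapsto 1$ otherwise, is not well defined on group elements. So the test ``$k=\rho(k)$'' has no meaning. Take $\Gamma=A_2$ and $Y=\{s\}$. Then $\hat m_{\alpha_s,\alpha_t}=3$, and the relation $\sigma_s\sigma_t\sigma_s=\sigma_t\sigma_s\sigma_t$ is sent to $\sigma_s^2=\sigma_s$. This is false, since $\sigma_s$ has infinite order: the $\sigma$-exponent sum gives a homomorphism to $\mathbb{Z}$. This is exactly your inductive step whenever $t$ has an odd-labelled edge to a vertex of $S\setminus\{s,t\}$. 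So the ``delicate check'' you postpone cannot succeed. Without some substitute for a retraction, your argument does not even make non-membership semi-decidable, and the induction does not close.

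\textbf{The missing idea.} You need the \emph{non-homomorphic} retraction $\pi_X$ of Theorem~\ref{thm2_9} (Godelle, Charney--Paris, Blufstein--Paris). It is defined on words through minimal coset representatives in the Coxeter group, and it is a well-defined set map $A[\Gamma]\to A[\Gamma_X]$ that is the identity on $A[\Gamma_X]$. It yields Proposition~\ref{prop2_13}: strong membership for $A[\Gamma_X]$ from the word problem in $A[\Gamma]$.

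\textbf{Handling the infinite graph.} To use this on $\KVA[\Gamma]=A[\hat\Gamma]$, you must also deal with $\hat\Gamma$ being infinite. The paper proceeds as follows.
\begin{itemize}
\item[(1)] By Lemma~\ref{lem4_1}, it computes a finite $\XX\subseteq\Phi[\Gamma]$ with $k\in A[\hat\Gamma_\XX]$, the matrix $\hat M_\XX$, and a word for $k$ in the $\delta_\beta^{\pm1}$.
\item[(2)] It solves the word problem in $A[\hat\Gamma_\XX]$ by rewriting each $\delta_\beta$ as a word over $\AA$ (Lemma~\ref{lem4_2}) and using the word problem in $\VA[\Gamma]$.
\item[(3)] It identifies $\KVA[\Gamma_Y]$ with $A[\hat\Gamma_{\Phi[\Gamma_Y]}]$ (Lemma~\ref{lem3_8}, Theorem~\ref{thm1_1}).
\item[(4)] It uses $A[\hat\Gamma_\XX]\cap A[\hat\Gamma_{\Phi[\Gamma_Y]}]=A[\hat\Gamma_{\XX\cap\Phi[\Gamma_Y]}]$ (Proposition~\ref{prop2_12}), with $\XX\cap\Phi[\Gamma_Y]$ computable by Corollary~\ref{corl3_6}.
\item[(5)] Proposition~\ref{prop2_13}, applied inside $A[\hat\Gamma_\XX]$, then decides membership of $k$.
\end{itemize}
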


A Coxeter graph $\Omega$ is said to be of \emph{spherical type} (resp. \emph{affine type}) if $W[\Omega]$ is finite (resp. affine).
We say that $\Gamma$ is of \emph{ FC type} if $\Gamma_X$ is of spherical type for every subset  $X \subseteq S$ that is free of infinity.
Similarly, we say that $\Gamma$ is of \emph{affine-FC type} if $\Gamma_X$ is either of spherical or affine type for every $X \subseteq S$ that is free of infinity.
Note that any Coxeter graph of FC type is of affine-FC type.
Artin groups of FC type were introduced by Charney and Davis \cite{CD95} in their study of the $K(\pi, 1)$ problem for Artin groups.
They act on CAT(0) cube complexes, and these actions are an essential ingredient in their study.
The notion of affine-FC type is new, but it arises naturally in our context; indeed:

\begin{corl}\label{corl1_5}
If $\Gamma$ is of affine-FC type, then $\VA [\Gamma]$ has a solvable word problem.
\end{corl}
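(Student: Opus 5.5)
By Theorem \ref{thm1_4}, it is enough to show that $\VA[\Gamma_X]$ has a solvable word problem for every subset $X \subseteq S$ that is free of infinity. When $\Gamma$ is of affine-FC type, each such $\Gamma_X$ is of spherical or affine type. So the proof reduces to showing that virtual Artin groups of spherical and of affine type have a solvable word problem. I plan to do this with the standard semidirect product decomposition of $\VA[\Omega]$. The group $W[\Omega]$ embeds in $\VA[\Omega]$ through $s \mapsto \tau_s$, and it is also a retract via $\sigma_s, \tau_s \mapsto s$. The kernel $\KVA[\Omega]$ of this retraction is generated by the conjugates $\tau_w \sigma_s \tau_w^{-1}$. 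Relation (v3) shows that these conjugates depend only on the root $w(\alpha_s)$, so the kernel is generated by elements $\delta_\beta$ indexed by the root system $\Phi[\Omega]$. I expect $\KVA[\Omega]$ to have a presentation whose generators are the $\delta_\beta$ and whose relations are the $W$-translates of the dihedral Artin relations, i.e.\ one Artin-type relation for each rank-two root subsystem.

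The main step is to prove that $\KVA[\Omega]$ has a solvable word problem when $\Omega$ is spherical or affine. The spherical case is the easy one. There $\Phi[\Omega]$ is finite, so $\KVA[\Omega]$ is finitely presented with the finite presentation above. Its relations are positive and homogeneous, and I would try to show it is an Artin-like group (ideally an Artin group of finite type in many generators) to which a Garside or normal-form argument applies. If that identification fails, I would fall back on the known general fact (from \cite{BPT23}-type results) that $\KVA[\Omega]$ is itself an Artin group. For finite $\Phi$ I expect the relevant Coxeter graph to be of FC type or to decompose in a controlled way, which would give a solvable word problem.

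The affine case is where I expect the real obstacle, since $\Phi[\Omega]$ is infinite. My plan is to use the translation subgroup $\Z^n \subseteq W[\Omega]$. It acts on the affine roots with finitely many orbits, because there are finitely many underlying finite roots and the translations only shift the level. Hence $\KVA[\Omega] \rtimes \Z^n$ is a finite-index subgroup of $\VA[\Omega]$. If $\KVA[\Omega]$ is an Artin group on $\Phi[\Omega]$ with recursively enumerable, locally finite relations, then its finitely generated standard parabolic subgroups are Artin groups whose word problems I would try to control uniformly. Concretely, I would show that any given word involves finitely many roots. I would then show that equality can be tested inside the standard parabolic subgroup on a finite, computable set of roots. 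Deciding that subgroup would use the description of rank-two subsystems in affine root systems, together with van der Lek's embedding of standard parabolic subgroups, which extends to infinite generating sets via direct limits. This local finiteness with decidability is the step I expect to be hardest.

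Finally, the word problem in $\VA[\Omega] = \KVA[\Omega] \rtimes W[\Omega]$ is handled as follows. Rewrite a word into the form $k \cdot \tau_w$. Decide whether $w = 1$ using the solvable word problem in the Coxeter group $W[\Omega]$, via Tits' solution. Then decide whether $k = 1$ in $\KVA[\Omega]$. Combined with Theorem \ref{thm1_4}, this gives the corollary.
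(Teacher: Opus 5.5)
Your outer reduction is the same as the paper's. Theorem~\ref{thm1_4} and the affine-FC hypothesis reduce the corollary to the word problem in $\VA[\Omega]$ for $\Omega$ of spherical or affine type. The paper then simply invokes \cite[Theorem 5.1]{BPT23}, which is exactly that statement, so citing it would have completed your proof. Instead you try to reprove it, and as written that part does not go through.

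\textbf{Spherical case.} You hope that $\KVA[\Omega]=A[\hat\Omega]$ is of finite type, or that $\hat\Omega$ is of FC type. Neither holds.
\begin{itemize}
\item $\hat\Omega$ is never of spherical type. Having $w(\alpha_s)=\beta$ and $w(\alpha_t)=-\beta$ would force $\alpha_t=-\alpha_s$, so $\hat m_{\beta,-\beta}=\infty$ for every root $\beta$.
\item $\hat\Omega$ is not of FC type in general. Take $\Omega=A_2$ with $S=\{s,t\}$. Then $st(\alpha_s)=\alpha_t$, $st(\alpha_t)=-\alpha_s-\alpha_t$, $ts(\alpha_s)=-\alpha_s-\alpha_t$ and $ts(\alpha_t)=\alpha_s$. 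Hence $\alpha_s,\alpha_t,-\alpha_s-\alpha_t$ are pairwise joined by edges labeled $3$ and span a subgraph of affine type $\tilde A_2$. In fact $\KVA[A_2]\cong A[\tilde A_2]*A[\tilde A_2]$.
\end{itemize}
So no Garside or FC argument applies directly. Your phrase ``decompose in a controlled way'' is exactly where the missing content lies. One has to determine which graphs $\hat\Omega_\XX$ occur for finite $\XX\subseteq\Phi[\Omega]$, and solve the word problem in the Artin groups $A[\hat\Omega_\XX]$. Already for $\Omega=A_2$ this requires the word problem in the affine Artin group $A[\tilde A_2]$.

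\textbf{Affine case.} You leave this same step explicitly as something you ``would try'' to do. Passing to the finite-index subgroup generated by $\KVA[\Omega]$ and the translations does not address it.

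The local-finiteness step (this is Lemma~\ref{lem4_1}) and your final semidirect-product argument are fine. However, they only reduce the problem to the Artin groups $A[\hat\Omega_\XX]$; they do not solve it.

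\textbf{A smaller slip.} The map $\sigma_s,\tau_s\mapsto s$ is a homomorphism, but it sends $\iota_W(w)\,\sigma_s\,\iota_W(w)^{-1}$ to $wsw^{-1}\neq 1$. Its kernel is the pure virtual Artin group, not $\KVA[\Omega]$. The retraction you want is $\pi_K$, with $\sigma_s\mapsto 1$ and $\tau_s\mapsto s$.
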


\begin{proof}
Suppose that $\Gamma$ is of affine-FC type.
It is known from \cite[Theorem 5.1]{BPT23} that, if $\Omega$ is a Coxeter graph of spherical or affine type, then $\VA [\Omega]$ has a solvable word problem.
This implies that $\VA_X [\Gamma] \simeq \VA [\Gamma_X]$ has a solvable word problem for every $X \subseteq S$ that is free of infinity, hence, by Theorem \ref{thm1_4}, $\VA [\Gamma]$ has a solvable word problem.
\end{proof}

If $G$ is a group, $S_G$ a generating set for $G$, and $\omega$ a word over $S_G \sqcup S_G^{-1}$, then we denote by $\overline{\omega}$ the element of $G$ represented by $\omega$.
Let $G$ be a group and $H$ a subgroup of $G$, both finitely generated.
Let $S_G$ be a finite generating set for $G$ and $S_H$ a finite generating set for $H$.
We say that the \emph{membership problem} for $H$ in $G$ is solvable if there exists an algorithm that, given a word $\omega$ over $S_G \sqcup S_G^{-1}$, decides whether $\overline{\omega}$ belongs to $H$ or not. 
Furthermore, we say that the \emph{strong membership problem} for $H$ in $G$ is solvable if there exists an algorithm that, given a word $\omega$ over $S_G \sqcup S_G^{-1}$, decides whether $\overline{\omega}$ belongs to $H$ and, if so, determines a word $\mu$ over $S_H \sqcup S_H^{-1}$ such that $\overline{\mu} = \overline{\omega}$.

The following result is a preliminary to the proof of Theorem \ref{thm1_4}, but it is also of independent interest. 

\begin{thm}\label{thm1_6}
Let $X \subseteq S$.
If the word problem in $\VA [\Gamma]$ is solvable, then the strong membership problem for $\VA_X [\Gamma]$ in $\VA [\Gamma]$ is solvable.
\end{thm}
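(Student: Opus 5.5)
The plan is to combine two procedures. The first is an obvious semi-decision procedure for membership. The second is a computable set-theoretic retraction $\rho_X \colon \VA[\Gamma] \to \VA_X[\Gamma]$ satisfying $\rho_X(g)=g$ if and only if $g \in \VA_X[\Gamma]$. Once $\rho_X$ is available, the algorithm is short. Given $\omega$, we compute a word $\mu$ over $\SS_X \sqcup \TT_X$ and their inverses with $\overline{\mu}=\rho_X(\overline{\omega})$. Using the solvable word problem in $\VA[\Gamma]$, we test whether $\overline{\mu}\,\overline{\omega}^{-1}=1$. If it is, then $\overline{\omega}\in \VA_X[\Gamma]$ and $\mu$ is the required word; if not, $\overline{\omega}\notin\VA_X[\Gamma]$.

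To build $\rho_X$ I would use the structure of $\VA[\Gamma]$ from \cite{BPT23}. The map $\sigma_s\mapsto s$, $\tau_s \mapsto s$ gives an epimorphism $\VA[\Gamma]\to W[\Gamma]$, split by $s\mapsto \tau_s$. Its kernel $\KVA[\Gamma]$ is an Artin group $A[\widehat\Gamma]$ whose generators $\delta_\beta$ are indexed by the root system $\Phi$ of $W[\Gamma]$. Concretely, $\delta_\beta = w\sigma_s w^{-1}$ whenever $\beta = w(\alpha_s)$, where $w$ is read through the $\tau$'s. Every $g$ is then written uniquely as $g=k\,w$ with $k\in\KVA[\Gamma]$ and $w\in W[\Gamma]$. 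This decomposition is computable from a word: push all the $\tau$'s to the right using relation (v3), rewriting each $\sigma_s$ as the appropriate $\delta_\beta$.

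The key structural claim, which I expect to extract from the proof of Theorem \ref{thm1_1}, is
\[
\VA_X[\Gamma]=\KVA_X[\Gamma]\rtimes W_X[\Gamma],
\]
where $\KVA_X[\Gamma]$ is the standard parabolic subgroup of $A[\widehat\Gamma]$ generated by the $\delta_\beta$ with $\beta\in\Phi_X$. Given this, $g=kw$ lies in $\VA_X[\Gamma]$ if and only if $w\in W_X[\Gamma]$ and $k\in \KVA_X[\Gamma]$. The first condition is decidable, since Coxeter groups have solvable word problem and membership in $W_X[\Gamma]$ can be read off a reduced expression. For the second condition, note that $k$ involves only finitely many $\delta_\beta$. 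It therefore lives in a finitely generated standard parabolic subgroup $A_Y[\widehat\Gamma]$ with $Y$ finite, computable from the word, and $A_Y[\widehat\Gamma]$ is isomorphic to $A[\widehat\Gamma_Y]$ by \cite{vLek83}. In that Artin group I would apply the Godelle--Paris set-theoretic retraction $\pi_{Y\cap\Phi_X}$ onto the standard parabolic subgroup. It is defined combinatorially from a word via the Salvetti complex, hence is computable, and it fixes exactly the elements of that parabolic subgroup. I then set $\rho_X(g)=\pi(k)\,w'$, where $w'$ is obtained from $w$ by keeping the projection to $W_X[\Gamma]$, for instance by discarding $w$ altogether when $w\notin W_X[\Gamma]$. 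Each $\delta_\beta$ with $\beta\in\Phi_X$ can be written explicitly as a word over $\SS_X\sqcup\TT_X$, because $\beta=u(\alpha_x)$ with $u\in W_X[\Gamma]$ and $x\in X$. So $\rho_X(g)$ comes with a word over the generators of $\VA_X[\Gamma]$.

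The main obstacle is the step combining three facts: that $\VA_X[\Gamma]\cap \KVA[\Gamma]$ is exactly the parabolic subgroup of $A[\widehat\Gamma]$ on $\Phi_X$; that $\Phi_X$ is a full subgraph of $\widehat\Gamma$ corresponding to it; and that the restriction to the finite piece $Y$ is compatible with these. I would first try to read all three off from the proof of Theorem \ref{thm1_1}.

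If the Godelle--Paris machinery turns out to be awkward here, there is a fallback that avoids retractions, at the cost of working in the Artin group rather than in $\VA[\Gamma]$. The intersection $Y\cap\Phi_X$ spans a full subgraph of the finite graph $\widehat\Gamma_Y$, and by van der Lek's theorem $A_Z\cap A_{Z'}=A_{Z\cap Z'}$ in $A[\widehat\Gamma_Y]$. This reduces the task to a membership problem in a finitely presented Artin group, where I would reuse the same enumerate-and-test strategy.
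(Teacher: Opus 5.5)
Your main route is the paper's proof. You split $g=k\,\iota_W(w)$ along $\KVA[\Gamma]\rtimes W[\Gamma]$, test $w\in W[\Gamma_X]$ with Tits' solution, and place $k$ in a finitely generated parabolic subgroup $A[\hat\Gamma_Y]$. You then retract onto $A[\hat\Gamma_{Y\cap\Phi[\Gamma_X]}]$ with the Godelle--Paris (Blufstein--Paris) map, rewrite the result over $\SS_X\sqcup\TT_X$, and compare with $\omega$ using the word problem in $\VA[\Gamma]$. The identification $\VA[\Gamma_X]\cap\KVA[\Gamma]=A[\hat\Gamma_{\Phi[\Gamma_X]}]$ that you hope to extract from Theorem \ref{thm1_1} is indeed Lemma \ref{lem3_8} together with the diagram in that proof.

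Two corrections. First, the splitting map must be $\pi_K$ ($\sigma_s\mapsto 1$, $\tau_s\mapsto s$). The map $\sigma_s\mapsto s$, $\tau_s\mapsto s$ that you wrote sends $\delta_\beta$ to $r_\beta\neq 1$, so its kernel is not $\KVA[\Gamma]=A[\hat\Gamma]$. Second, the ``compatibility with the finite piece'' that you list as an obstacle is van der Lek's equality $A[\hat\Gamma_Y]\cap A[\hat\Gamma_{\Phi[\Gamma_X]}]=A[\hat\Gamma_{Y\cap\Phi[\Gamma_X]}]$ (Proposition \ref{prop2_12}). It is what makes the retraction fix $k$ when $g\in\VA[\Gamma_X]$, so it belongs in the main argument, not the fallback.

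The missing ingredient is the computability of the retraction. Evaluating $\pi^*_{Y\cap\Phi[\Gamma_X]}$ on a word requires minimal coset representatives in the Coxeter group $W[\hat\Gamma_Y]$, so you need the Coxeter matrix $(\hat m_{\beta,\gamma})_{\beta,\gamma\in Y}$. You cannot compute in $W[\Gamma]$ instead, because the homomorphism $W[\hat\Gamma_Y]\to W[\Gamma]$, $\beta\mapsto r_\beta$, is not injective in general: $\beta$ and $-\beta$ both map to $r_\beta$, while $\hat m_{\beta,-\beta}=\infty$.

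The entry $\hat m_{\beta,\gamma}$ is defined existentially over $W[\Gamma]$, and it cannot be read off from $\langle\beta,\gamma\rangle$. For example, if $S=\{s,t\}$ with $m_{s,t}=4$, then $\alpha_t\perp s(\alpha_t)$, yet $\hat m_{\alpha_t,s(\alpha_t)}=\infty$. When $W[\Gamma]$ is infinite, a search for a suitable $w$ never halts if the entry is $\infty$. Your ``push the $\tau$'s to the right'' yields only items (i) and (iii) of Lemma \ref{lem4_1}. Item (ii), the matrix $\hat M_Y$, is the nontrivial input that the paper imports from \cite[Proposition 5.2]{BPT23}.

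Likewise, deciding which $\beta\in Y$ lie in $\Phi[\Gamma_X]$ needs Lemma \ref{lem3_5}, namely $\Phi[\Gamma_X]=V_X\cap\Phi[\Gamma]$. After that, a support test decides membership, and the descent of Lemma \ref{lem4_2} (or a search, now guaranteed to halt) writes $\beta=u(\alpha_x)$ with $u\in W[\Gamma_X]$ and $x\in X$.

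Finally, discard the fallback. Van der Lek's theorem only tells you which parabolic subgroup to test, and enumerate-and-test merely semi-decides membership, so it can never certify $g\notin\VA[\Gamma_X]$. Producing a candidate whose failure certifies non-membership is exactly the job of the retraction.
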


The paper is organized as follows.
Section \ref{sec2} is divided into four subsections.
In the first subsection, we show that, for $X \subseteq S$, the strong membership problem for $W_X [\Gamma]$ in $W [\Gamma]$ is solvable (see Proposition \ref{prop2_5}).
This result is well-known to experts, but to the best of our knowledge, it is nowhere explicitly stated and proven in the literature.
In the second subsection, we show that, for $X \subseteq S$, the strong membership problem for $A_X [\Gamma]$ in $A [\Gamma]$ is solvable provided that the word problem in $A [\Gamma]$ is solvable (see Proposition \ref{prop2_13}).
This result is new, but it is implicit in \cite{CP14}, \cite{BP23}, or \cite{God23}.
Subsection \ref{subsec2_3} provides preliminaries on virtual Artin groups from \cite{BPT23}.
Subsection \ref{subsec2_4} explains how to solve the word problem in an amalgamated product under certain conditions (see Proposition \ref{prop2_18}).
This result is important for proving Theorem \ref{thm1_4} from Theorem \ref{thm1_6}.
We prove Theorems \ref{thm1_1} and \ref{thm1_2} in Section \ref{sec3}, and we prove Theorems \ref{thm1_6} and \ref{thm1_4} in Section \ref{sec4}.

\begin{acknow}
The authors would like to thank Mar\'ia Cumplido for several stimulating and constructive exchanges.
The first author is part of the research project PID2022-138719NA-I00, financed by MCIN/AEI/10.13039/501100011033/FEDER, UE.
The second author was supported by the EPSRC Standard Research Grant UKRI1018.
The third author is partially supported by the French project ``CaGeT'' (ANR-25-CE40-4162) of the ANR.
The IMB receives support from the EIPHI Graduate School (contract ANR-17-EURE-0002).
\end{acknow}


\section{Preliminaries}\label{sec2}

In our study we will use the fact that, for $X \subseteq S$, the strong membership problem for $W_X [\Gamma]$ in $W [\Gamma]$ is solvable.
This result is known to experts but not explicitly stated, hence we provide a proof in Subsection \ref{subsec2_1}.
Furthermore, the tools used in this proof also prove that $W [\Gamma_X]$ embeds into $W [\Gamma]$ and that the intersection of two standard parabolic subgroups of $W [\Gamma]$ is itself a standard parabolic subgroup. 
Although these two results are well-known, their proofs are short and natural in our context, hence we include them for the sake of completeness.

The goal of Subsection \ref{subsec2_2} is to prove that, for $X \subseteq S$, the strong membership problem for $A_X [\Gamma]$ in $A [\Gamma]$ is solvable, provided that the word problem in $A [\Gamma]$ is solvable (see Proposition \ref{prop2_13}).
This result is new, although it is implicit in \cite{BP23} and \cite{CP14}.
Furthermore, as in the case of Coxeter groups, the techniques used in the proof also prove that $A [\Gamma_X]$ embeds into $A [\Gamma]$ and that the intersection of two standard parabolic subgroups of $A [\Gamma]$ is itself a standard parabolic subgroup. 
We also provide short proofs of these two results, as they may be of independent interest to the reader.

In Subsection \ref{subsec2_3}, we present some results from \cite{BPT23} that enable us to reduce the study of virtual Artin groups to that of Coxeter and Artin groups.
The goal of Subsection \ref{subsec2_4} is to prove Proposition \ref{prop2_18}, which concerns amalgamated products.
While this result is known to experts, we have not found it stated in this form in the literature.
It is important in our study, hence we provide a detailed proof.

\subsection{Standard parabolic subgroups of Coxeter groups}\label{subsec2_1}

The length of a word $\omega$ will always be denoted by $\ell (\omega)$.
We denote by $\ell_S : W [\Gamma] \to \N$ the word length relative to $S$.
As usual, if $\omega$ is a word over $S$, then we denote by $\overline{\omega}$ the element of $W [\Gamma]$ represented by $\omega$.
Note that, as $S^{-1} = S$, we only need to consider words over $S$ and not over $S \sqcup S^{-1}$, when considering expressions for the elements of $W [\Gamma]$.
A word $\omega \in S^*$ is called \emph{reduced} if $\ell (\omega) = \ell_S (\overline{\omega})$.

Let $\omega$ and $\omega'$ be words over $S$.
We say that $\omega'$ is obtained from $\omega$ by an \emph{M-operation of type I} if there exist words $\mu, \nu \in S^*$ and an element $s \in S$ such that $\omega = \mu  s s \nu$ and $\omega' = \mu \nu$.
We say that $\omega'$ is obtained from $\omega$ by an \emph{M-operation of type II} if there exist $\mu, \nu \in S^*$ and $s,t \in S$, with $s \neq t$ and $m_{s,t} \neq \infty$, such that $\omega = \mu \, \Pi (s,t, m_{s,t}) \, \nu$ and $\omega' = \mu \, \Pi (t,s, m_{s,t}) \, \nu$.
We say that a word $\omega \in S^*$ is \emph{M-reduced} if its length cannot be strictly reduced by any finite sequence of M-operations.

Note that an M-operation of type I strictly decreases word length, whereas an M-operation of type II preserves length.
The latter is reversible, unlike the former.
Furthermore, if $\omega'$ can be obtained from $\omega$ by an M-operation, then $\overline{\omega'} = \overline{\omega}$.

\begin{thm}[Tits \cite{Tit69}]\label{thm2_1}
\begin{itemize}
\item[(1)]
A word $\omega \in S^*$ is reduced if and only if it is M-reduced.
\item[(2)]
Let $\omega$ and $\omega'$ be reduced words over $S$.
If $\overline{\omega} = \overline{\omega'}$, then $\omega$ can be transformed into $\omega'$ by a finite sequence of M-operations of type II.
\end{itemize}
\end{thm}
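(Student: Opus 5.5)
The plan is to derive both parts from the exchange condition for the Coxeter group $W[\Gamma]$. Since the paper cites Tits, I will take as the standard input that $W[\Gamma]$ acts faithfully on its Tits representation $V=\bigoplus_{s\in S}\R e_s$, which carries the bilinear form $\langle e_s,e_t\rangle=-\cos(\pi/m_{s,t})$ (and $-1$ when $m_{s,t}=\infty$). From the root system $\Phi=\{w e_s\}$ one gets the usual dichotomy: every root is either positive or negative. One also gets the length formula $\ell_S(w)=|\{\alpha\in\Phi^+ : w\alpha\in\Phi^-\}|$, together with the fact that $\ell_S(ws)<\ell_S(w)$ if and only if $w e_s\in\Phi^-$. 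From these facts I will deduce the \emph{exchange condition} in the following form. Let $s_1\cdots s_n$ be a reduced word and $s\in S$ with $\ell_S(\overline{s_1\cdots s_n}\,s)<n$. Then there is an index $i$ such that $\overline{s_1\cdots s_n}\,s=\overline{s_1\cdots\widehat{s_i}\cdots s_n}$. To prove it, take the largest $i$ with $\overline{s_i\cdots s_n}e_s$ negative. Since $s_i$ sends exactly one positive root to a negative one, namely $e_{s_i}$, we get $\overline{s_{i+1}\cdots s_n}\,e_s=e_{s_i}$. Conjugating the reflections then gives the deletion.

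\textbf{Part (1).} An M-reduced word cannot be shortened by M-operations, and I must show it is reduced. Equivalently, every non-reduced word can be shortened by M-operations. I will prove the stronger Matsumoto-type statement that any two reduced words for the same element are related by type II operations, which is part (2). Then (1) follows by induction on length. Let $\omega$ be non-reduced, and take a shortest prefix $\omega_0 s$ of $\omega$ that is non-reduced. Then $\omega_0$ is reduced, and by exchange $\overline{\omega_0 s}=\overline{\omega_0'}$ with $\omega_0'$ of length $\ell(\omega_0)-1$. Hence $\omega_0$ and the reduced word $\omega_0' s$ represent the same element. By (2) we can transform $\omega_0$ into $\omega_0' s$ using type II moves, so the prefix $\omega_0 s$ becomes $\omega_0' s s$, and a type I move shortens it. For the converse direction of (1), a reduced word cannot be shortened at all, so it is trivially M-reduced.

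\textbf{Part (2).} I will use induction on the common length $n$. Write $\omega=\mu s$ and $\omega'=\mu' t$, both reduced with $\overline\omega=\overline{\omega'}=w$. If $s=t$, cancel and apply induction. If $s\neq t$, then $\ell_S(ws)<\ell_S(w)$ and $\ell_S(wt)<\ell_S(w)$. Geometrically this says $we_s$ and $we_t$ are both negative. The rank-two root subsystem of $\langle s,t\rangle$ then forces $m_{s,t}<\infty$, and it forces $w$ to have a reduced expression ending in $\Pi(\ldots,t,s,m_{s,t})$, i.e.\ $w=w'\cdot w_0(s,t)$ with lengths adding. The cleanest way to see this is to take the minimal coset representative $w'$ of $w$ in $wW_{\{s,t\}}$. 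Pick a reduced word $\nu$ for $w'$. Then $\nu\,\Pi(t,s,m)$ and $\nu\,\Pi(s,t,m)$ are reduced words for $w$, ending in $s$ and in $t$ respectively (with the appropriate parity). By induction, $\mu$ is equivalent to $\nu\,\Pi(t,s,m)$ with its last letter removed, so $\omega$ is equivalent to $\nu\,\Pi(t,s,m)$. Likewise $\omega'$ is equivalent to $\nu\,\Pi(s,t,m)$. One braid move joins these two words.

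\textbf{Main obstacle.} The hardest step is this dihedral reduction. I need to prove that if $\ell_S(ws)<\ell_S(w)$ and $\ell_S(wt)<\ell_S(w)$, then $m_{s,t}$ is finite and $w=w'w_0(s,t)$ with $\ell_S(w)=\ell_S(w')+m_{s,t}$. I would handle it with the root-system description. Let $w'$ be the minimal coset representative, so $w'e_s$ and $w'e_t$ are positive. Then every positive root of the dihedral subsystem, being a nonnegative combination of $e_s$ and $e_t$, is also sent to a positive root by $w'$. It follows that $\ell_S(w'u)=\ell_S(w')+\ell_S(u)$ for every $u\in W_{\{s,t\}}$. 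The element $u=w'^{-1}w$ then sends both $e_s$ and $e_t$ to negative roots. Inside the dihedral group this forces $u$ to be the longest element, and in particular forces the group to be finite.
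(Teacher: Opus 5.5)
The paper gives no proof of this statement. It is quoted as a black box from Tits \cite{Tit69} and then used to define $\supp(w)$ and to obtain Propositions~\ref{prop2_2}--\ref{prop2_5}.

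Your proposal is a correct, self-contained proof along standard textbook lines. You prove part (2) (Matsumoto's theorem) first, by induction on length. When the two last letters differ, you reduce to a rank-two statement via a minimal-length representative $w'$ of $wW[\Gamma_{\{s,t\}}]$, and you then deduce part (1) from (2). Your inputs are faithfulness of the canonical representation, the positive/negative dichotomy, Lemma~\ref{lem3_1}, and $\ell_S(w)=|N(w)|$. These are of the same nature as what the paper imports from Deodhar anyway, so citing Tits mainly buys brevity, while your route makes Subsection~\ref{subsec2_1} self-contained.

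Two remarks.

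\textbf{The exchange condition is not needed for (1).} Suppose $\omega_0$ is reduced and $\omega_0 s$ is not. Then $\ell_S(\overline{\omega_0}s)=\ell(\omega_0)-1$; note that $\ell_S(ws)\neq\ell_S(w)$ already follows from your equivalence applied to $w$ and to $ws$. So for any reduced word $\rho$ for $\overline{\omega_0}s$, the word $\rho s$ is reduced for $\overline{\omega_0}$, and (2) finishes the argument.

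\textbf{The rank-two step needs care.} Your phrase ``inside the dihedral group this forces $u$ to be the longest element'' hides the one genuine computation. There you must not use Proposition~\ref{prop2_3} to identify $\ell_S$ on $W[\Gamma_{\{s,t\}}]$ with the dihedral length, because the paper derives that proposition from the very theorem you are proving. Get it from your inversion-set argument instead:
\begin{itemize}
\item Since $ue_s,ue_t\in\Phi^-$, the element $u$ negates every positive root of the rank-two subsystem. Hence $\ell_S(u)=|N(u)|$ is at least the number of such roots.
\item That number is infinite when $m_{s,t}=\infty$, which is a contradiction, and equals $m_{s,t}$ otherwise (a short explicit rank-two check).
\item Conversely, $\ell_S(u)$ is at most the dihedral length of $u$, and every element of $W[\Gamma_{\{s,t\}}]$ has dihedral length at most $m_{s,t}$.
\end{itemize}
Hence $\ell_S(u)=m_{s,t}$, so $u=\Pi(s,t,m_{s,t})=\Pi(t,s,m_{s,t})$, and these two words are genuinely reduced in $W[\Gamma]$. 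This is exactly what your final braid move requires.
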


Let $w \in W [\Gamma]$.
Choose a reduced expression $\omega = s_1 s_2 \cdots s_l$ for $w$, and set $\supp(w) = \{ s_1, s_2, \dots, s_l\}$.
By Theorem \ref{thm2_1}\,(2), this definition does not depend on the choice of the reduced expression.

\begin{prop}\label{prop2_2}
Let $X \subseteq S$ and $w \in W [\Gamma]$.
Then $w \in W_X [\Gamma]$ if and only if $\supp (w) \subseteq X$.
\end{prop}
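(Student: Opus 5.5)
The reverse implication is immediate. If $\supp(w) \subseteq X$, then a reduced expression $s_1 s_2 \cdots s_l$ of $w$ only involves letters of $X$. Hence $w = s_1 s_2 \cdots s_l \in W_X[\Gamma]$.

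For the direct implication, I will use Theorem \ref{thm2_1}\,(1) to show that a word over $X$ can be reduced without leaving $X$. Suppose $w \in W_X[\Gamma]$. Since every element of $X$ is an involution, $w$ is represented by some word $\omega \in X^*$. I start from $\omega$ and repeatedly look for a finite sequence of M-operations that strictly shortens the current word. If the current word is not reduced, then by Theorem \ref{thm2_1}\,(1) it is not M-reduced, so such a sequence exists. Each shortening strictly decreases length, so the process stops. By Theorem \ref{thm2_1}\,(1), the word it stops at is reduced.

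The key observation is that M-operations never introduce new letters. An operation of type I deletes a factor $ss$. An operation of type II replaces $\Pi(s,t,m_{s,t})$ by $\Pi(t,s,m_{s,t})$, and both sides involve exactly the letters $s$ and $t$. Consequently the set of letters appearing in the word is non-increasing along any sequence of M-operations. Every word obtained from $\omega$ therefore lies in $X^*$; in particular the final reduced word $\omega'$ does.

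Since $\overline{\omega'} = \overline{\omega} = w$ and $\omega'$ is reduced, $\supp(w)$ is the set of letters of $\omega'$, and this set is contained in $X$. This uses that $\supp$ does not depend on the chosen reduced expression, which follows from Theorem \ref{thm2_1}\,(2). There is no real obstacle here; the only point needing care is the letter-preservation property of both types of M-operations, which is clear from their definitions.
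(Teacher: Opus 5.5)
Your proof is correct and follows essentially the same route as the paper: reduce a word over $X$ to a reduced word by M-operations (Theorem~\ref{thm2_1}), and note that M-operations never introduce new letters, so the resulting reduced expression lies in $X^*$. The paper dismisses the letter-preservation step with ``by definition of the M-operations''; your explicit check of both operation types spells out that same step.
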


\begin{proof}
Clearly, if $\supp (w) \subseteq X$, then $w \in W_X [\Gamma]$.
Conversely, let $w \in W_X [\Gamma]$ and let $\omega = s_1 s_2 \cdots s_l$ be a word over $X$ representing $w$.
By Theorem \ref{thm2_1}, there is a finite sequence of M-operations that, applied to $\omega$, yields a reduced word $\omega' = t_1 t_2 \cdots t_k$ with $\overline{\omega'} = \overline{\omega} = w$.
By definition of the M-operations, we have $\supp (w) = \{ t_1, t_2, \dots, t_k \} \subseteq \{ s_1, s_2, \dots, s_l \} \subseteq X$.
\end{proof}

\begin{prop}\label{prop2_3}
Let $X \subseteq S$.
The natural homomorphism $\iota_X : W[\Gamma_X] \to W [\Gamma]$ that maps $x$ to $x$ for all $x \in X$ is injective and preserves word length.
\end{prop}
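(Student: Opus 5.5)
The plan is to apply Theorem \ref{thm2_1} twice: once in $W[\Gamma_X]$ and once in $W[\Gamma]$. The key observation is that the M-operations for $\Gamma_X$ are exactly the M-operations for $\Gamma$ applied to words over $X$. Indeed, $\Gamma_X$ is the full subgraph spanned by $X$, so the Coxeter matrix of $\Gamma_X$ is the restriction of $M$ to $X \times X$. Hence, for $s,t \in X$, the words $\Pi(s,t,m_{s,t})$ are the same whether computed in $\Gamma_X$ or in $\Gamma$. Moreover, applying an M-operation of $\Gamma$ to a word over $X$ yields a word over $X$. Type I deletes letters. Type II replaces $\Pi(s,t,m_{s,t})$ by $\Pi(t,s,m_{s,t})$, and the occurrence of $\Pi(s,t,m_{s,t})$ in a word over $X$ forces $s,t \in X$.

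First I will show that a word $\omega \in X^*$ is reduced in $W[\Gamma_X]$ if and only if it is reduced in $W[\Gamma]$. By Theorem \ref{thm2_1}\,(1), applied to both graphs, it suffices to show that $\omega$ is M-reduced with respect to $\Gamma_X$ if and only if it is M-reduced with respect to $\Gamma$. By the observation above, any finite sequence of $\Gamma$-M-operations starting from $\omega$ stays inside $X^*$. Such a sequence is therefore a sequence of $\Gamma_X$-M-operations, and the converse is immediate. So the two notions of M-reducedness coincide.

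Length preservation then follows. Let $w \in W[\Gamma_X]$ and choose a reduced word $\omega \in X^*$ for $w$ in $W[\Gamma_X]$. Then $\omega$ is reduced in $W[\Gamma]$ and represents $\iota_X(w)$. Hence
\[
\ell_S(\iota_X(w)) = \ell(\omega) = \ell_X(w).
\]
Injectivity follows at once. If $\iota_X(w) = 1$, then $\ell_X(w) = \ell_S(1) = 0$, so $w = 1$.

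I do not expect a serious obstacle here. The only point needing care is the closure claim: M-operations of $\Gamma$ never introduce letters outside $X$, and the relation words coincide because $\Gamma_X$ is a full subgraph. Both points are immediate from the definitions.
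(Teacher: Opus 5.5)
Your proof is correct and follows the same route as the paper. You take a reduced word over $X$, use Theorem \ref{thm2_1}\,(1) to see that it is M-reduced and hence reduced over $S$, and then read off length preservation and injectivity. Your explicit check that $\Gamma$-M-operations on words in $X^*$ stay in $X^*$ and coincide with the $\Gamma_X$-M-operations is precisely the step the paper leaves implicit in ``hence it is reduced when viewed as a word over $S$.''
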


\begin{proof}
Let $w$ be a non-trivial element of $W[\Gamma_X]$.
Let $\omega$ be a reduced word over $X$ representing $w$.
By Theorem \ref{thm2_1}, $\omega$ is M-reduced, hence it is reduced when viewed as a word over $S$.
It follows that $\ell_X(w) = \ell(\omega) = \ell_S(\iota_X(w)) > 0$ and $\iota_X(w) \neq 1$.
\end{proof}

From now on, for $X \subseteq S$, we identify $W [\Gamma_X]$ with its image $W_X [\Gamma]$ in $W [\Gamma]$ under the homomorphism $\iota_X$.

\begin{prop}\label{prop2_4}
Let $X, Y \subseteq S$. 
Then $W [\Gamma_X] \cap W [\Gamma_Y] = W [\Gamma_{X \cap Y}]$.
\end{prop}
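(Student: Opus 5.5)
The inclusion $W [\Gamma_{X \cap Y}] \subseteq W [\Gamma_X] \cap W [\Gamma_Y]$ is immediate, since each generator $x \in X \cap Y$ lies in both $W_X [\Gamma]$ and $W_Y [\Gamma]$. Here we use the identification of $W [\Gamma_Z]$ with $W_Z [\Gamma]$ provided by Proposition \ref{prop2_3}. The content of the statement is therefore the reverse inclusion, and for this I would rely entirely on the support characterization of Proposition \ref{prop2_2}.

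Let $w \in W [\Gamma_X] \cap W [\Gamma_Y]$. Since $w \in W_X [\Gamma]$, Proposition \ref{prop2_2} gives $\supp (w) \subseteq X$. Since $w \in W_Y [\Gamma]$, the same proposition gives $\supp (w) \subseteq Y$. Hence $\supp (w) \subseteq X \cap Y$, and a third application of Proposition \ref{prop2_2}, now with the subset $X \cap Y$, yields $w \in W_{X \cap Y} [\Gamma] = W [\Gamma_{X \cap Y}]$.

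The only point needing care is that $\supp (w)$ is well defined, that is, independent of the chosen reduced expression. Otherwise the two applications of Proposition \ref{prop2_2} might refer to different reduced words. This independence is guaranteed by Theorem \ref{thm2_1}\,(2), as noted right after the definition of the support: an M-operation of type II does not change the set of letters appearing in a word. So there is no real obstacle, and the whole argument reduces to two or three lines once Proposition \ref{prop2_2} is in hand.
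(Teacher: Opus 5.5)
Your proof is correct and essentially identical to the paper's. The easy inclusion is immediate, and for the reverse inclusion Proposition \ref{prop2_2} gives $\supp(w) \subseteq X \cap Y$, hence $w \in W[\Gamma_{X \cap Y}]$; your well-definedness remark via Theorem \ref{thm2_1}\,(2) is the same justification the paper gives right after defining the support.
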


\begin{proof}
The inclusion $W [\Gamma_{X \cap Y}] \subseteq W [\Gamma_X] \cap W [\Gamma_Y]$ is obvious, so we only need to prove the reverse inclusion.
Let $w \in W [\Gamma_X] \cap W [\Gamma_Y]$.
Then, by Proposition \ref{prop2_2}, $\supp(w) \subseteq X \cap Y$, hence $w \in W [\Gamma_{X \cap Y}]$.
\end{proof}

\begin{prop}\label{prop2_5}
Let $X \subseteq S$.
The strong membership problem for $W [\Gamma_X]$ in $W [\Gamma]$ is solvable.
\end{prop}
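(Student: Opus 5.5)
The plan is to give an explicit algorithm built from Theorem \ref{thm2_1} and Proposition \ref{prop2_2}. We work with the generating set $S$ for $W[\Gamma]$ and $X$ for $W[\Gamma_X]$, which by Proposition \ref{prop2_3} is identified with $W_X[\Gamma]$. Since $S^{-1}=S$, the input is a word $\omega \in S^*$.

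The first step is to compute, from $\omega$, a reduced word $\omega'$ with $\overline{\omega'}=\overline{\omega}$. Because M-operations of type II preserve length and $S$ is finite, the set $E(\omega)$ of words obtainable from $\omega$ by finite sequences of M-operations of type II is finite. It can be computed by a breadth-first search among the finitely many words of length $\ell(\omega)$ over $S$. If some word in $E(\omega)$ contains a factor $ss$, we apply the corresponding M-operation of type I, obtaining a strictly shorter word, and restart. Otherwise the procedure stops.

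The step needing care is the correctness of this stopping criterion. The claim is that if no word in $E(\omega)$ contains a factor $ss$, then $\omega$ is M-reduced. Indeed, any sequence of M-operations that strictly shortens $\omega$ must contain a first type I operation. All operations before it are of type II, so that operation is applied to a word of $E(\omega)$ containing some $ss$. By Theorem \ref{thm2_1}\,(1), $\omega$ is then reduced. Since the length strictly decreases at each restart, the procedure terminates, and it outputs a reduced word $\omega'$ representing $\overline{\omega}$.

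Finally, by Proposition \ref{prop2_2} and the definition of the support, $\overline{\omega}\in W_X[\Gamma]$ if and only if every letter of $\omega'$ lies in $X$. This is a finite check. In the positive case, $\omega'$ is itself a word over $X$ representing $\overline{\omega}$, so we output $\mu=\omega'$, which solves the strong membership problem. The only delicate point in the whole argument is the finiteness and termination of the search, and the argument above handles it.
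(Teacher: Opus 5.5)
Your proposal is correct and follows the paper's proof: you reduce $\omega$ by M-operations to an M-reduced word $\omega'$, which is reduced by Theorem~\ref{thm2_1}. You then read off membership from the letters of $\omega'$ via Proposition~\ref{prop2_2}, and $\omega'$ itself serves as the witness word over $X$; the only addition is that you justify, via the finite type~II class $E(\omega)$ and the first-type-I-operation argument, the step the paper dismisses as ``clearly algorithmic''.
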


\begin{proof}
Let $\omega = s_1 s_2 \cdots s_l$ be a word over $S$.
We apply M-operations to $\omega$ until we obtain an M-reduced word $\omega' = t_1 t_2 \cdots t_k$.
This procedure is clearly algorithmic.
Then, $\overline{\omega} \in W [\Gamma_X]$ if and only if $\supp(\overline{\omega}) = \{t_1, t_2, \dots, t_k\} \subseteq X$.
Furthermore, if $\supp(\overline{\omega}) = \{t_1, t_2, \dots, t_k\} \subseteq X$, then $\omega'$ is a word over $X$ representing $\overline{\omega}$.
\end{proof}

\subsection{Standard parabolic subgroups of Artin groups}\label{subsec2_2}

Let $X, Y \subseteq S$ and $w_0 \in W[\Gamma]$.
We say that $w_0$ is \emph{$(X,Y)$-minimal} if it is of minimal length in the double coset $W[\Gamma_X] w_0 W[\Gamma_Y]$.
The following lemma is stated in the exercises of Chapter 4 of \cite{Bou68} (see \cite[Section 4.3]{Dav08} for a proof).

\begin{lem}[Bourbaki \cite{Bou68}]\label{lem2_6}
Let $X, Y \subseteq S$ and $w \in W [\Gamma]$.
There exists a unique $(X,Y)$-minimal element $w_0$ lying in the double coset $W [\Gamma_X] w W [\Gamma_Y]$.
Furthermore, there exist $u \in W [\Gamma_X]$ and $v \in W [\Gamma_Y]$ such that $w = u w_0 v$ and $\ell_S (w) = \ell_S (u) + \ell_S (w_0) + \ell_S (v)$.
\end{lem}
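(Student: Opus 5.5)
We plan to use the standard coset-minimality facts for Coxeter groups together with Theorem \ref{thm2_1}. The key tool is the following one-sided statement. For any $w \in W[\Gamma]$ and $X \subseteq S$, there is a unique element of minimal length in the coset $W[\Gamma_X] w$. Call it $w^X$. It is characterized by $\ell_S(x w^X) > \ell_S(w^X)$ for all $x \in X$, and every $u \in W[\Gamma_X]$ satisfies $\ell_S(u w^X) = \ell_S(u) + \ell_S(w^X)$. The symmetric statement holds for right cosets $w W[\Gamma_Y]$.

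To get this, we would use the exchange condition, which follows from Theorem \ref{thm2_1}. Take $w'$ of minimal length in $W[\Gamma_X] w$ and $u \in W[\Gamma_X]$. We prove additivity by induction on $\ell_S(u)$. Write $u = u' x$ with $x \in X$ and $\ell_S(u') = \ell_S(u) - 1$. By minimality, $\ell_S(x w') > \ell_S(w')$, so we may consider the reduced word obtained by concatenating reduced words for $u'$, $x$ and $w'$. Suppose this word is not reduced. Then, by the deletion/exchange condition, some letter can be deleted. Deleting a letter inside $w'$ would produce an element of $W[\Gamma_X] w$ of smaller length than $w'$, contradicting minimality. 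Deleting a letter inside the prefix would contradict the induction hypothesis together with the reducedness of the word for $u$. Uniqueness of $w'$ then follows from additivity: if $w''$ is another minimal element, then $w'' = u w'$ with $\ell_S(w'') = \ell_S(u) + \ell_S(w')$, which forces $u = 1$.

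For the double coset, let $D = W[\Gamma_X] w W[\Gamma_Y]$ and pick $w_0 \in D$ of minimal length. Then $w_0$ is minimal in both its left coset $W[\Gamma_X] w_0$ and its right coset $w_0 W[\Gamma_Y]$. Now take $w \in D$. First let $w_1 = w^X$, the minimal element of $W[\Gamma_X] w$, so that $w = u_1 w_1$ with lengths adding. Then write $w_1 = w_1' v$, where $w_1'$ is minimal in $w_1 W[\Gamma_Y]$ and lengths add. The main obstacle is showing that $w_1'$ is still left-$X$-minimal. Once that holds, we still need $w_1' = w_0$, and from this the length-additive factorization $w = u w_0 v$ follows by combining the two additive decompositions.

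For the left-$X$-minimality of $w_1'$, suppose $x \in X$ satisfies $\ell_S(x w_1') < \ell_S(w_1')$. Since $\ell_S(w_1) = \ell_S(w_1') + \ell_S(v)$, the exchange condition applied to $x w_1 = (x w_1') v$ shows $\ell_S(x w_1) < \ell_S(w_1)$, contradicting the minimality of $w_1$. This gives a double-coset representative $w_0'$ that is minimal on both sides.

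It remains to show that any element minimal on both sides is the unique minimum of $D$. This is where we expect the real difficulty, and we would use Deodhar's lemma. Suppose $w_0'$ is minimal on both sides and $w_0' = u w_0 v$ with $u \in W[\Gamma_X]$ and $v \in W[\Gamma_Y]$. We argue by induction on $\ell_S(u) + \ell_S(v)$. If $x \in X$ is a left descent of $u$, then either $x u w_0$ is shorter than $u w_0$, or $x$ commutes past via $x w_0 = w_0 y$ with $y \in Y$ (Deodhar's lemma). In either case we can shorten the expression, and using the two-sided minimality we conclude $w_0' = w_0$.
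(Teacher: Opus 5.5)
\textbf{There is a genuine gap in your final step.} Your first two stages are sound. The one-sided facts follow from the exchange condition as you indicate. The left-$X$-reducedness of $w_1'$ is just $\ell_S(xw_1)\le\ell_S(xw_1')+\ell_S(v)<\ell_S(w_1)$. So every $w\in D$ factors as $u_1w_1'v$ with additive lengths and $w_1'$ reduced on both sides.

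But $w_1'$ depends on $w$. Everything the lemma asserts (uniqueness of $w_0$, and additivity through that \emph{fixed} $w_0$) rests on your last claim: that an element of $D$ reduced on both sides is the minimal-length one. Your sketch does not prove this, for three reasons.
\begin{itemize}
\item If $x\in X$ is a left descent of $u$, then $\ell_S(xuw_0)<\ell_S(uw_0)$ holds automatically, since $w_0$ is left-$X$-reduced. But multiplying by $x$ changes the element $w_0'$, so this gives no shorter expression for it.
\item To move $x$ across $w_0$, it has to be a right descent, $u=u_1x$. Deodhar's lemma then leaves two cases: $xw_0=w_0y$ with $y\in Y$, \emph{or} $xw_0$ is again right-$Y$-reduced. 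The second case is exactly where the difficulty lies, and your argument does not address it; nothing can be shortened there.
\item Even in the first case, $\ell_S(u_1)+\ell_S(yv)$ may equal $\ell_S(u)+\ell_S(v)$, so your induction measure need not drop. You also never say where the two-sided reducedness of $w_0'$ is used.
\end{itemize}

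\textbf{How to repair it within your framework.} The missing ingredient is a pair of length identities. Take $w_0$ of minimal length in $D$ and $w_0'=uw_0v$ reduced on both sides. Apply one-sided additivity to $uw_0=w_0'v^{-1}$ and to $w_0v=u^{-1}w_0'$. This gives $\ell_S(u)+\ell_S(w_0)=\ell_S(w_0')+\ell_S(v)$ and $\ell_S(w_0)+\ell_S(v)=\ell_S(u)+\ell_S(w_0')$. Hence $\ell_S(w_0')=\ell_S(w_0)$ and $\ell_S(u)=\ell_S(v)$ for every such expression.

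Now induct on $\ell_S(u)$, with $u=u_1x$ as above. Suppose $xw_0$ were right-$Y$-reduced. Then $\ell_S(xw_0v)=\ell_S(w_0)+1+\ell_S(v)$. But $xw_0v=u_1^{-1}w_0'$ has length at most $\ell_S(v)-1+\ell_S(w_0)$, a contradiction. So $xw_0=w_0y$, giving $w_0'=u_1w_0(yv)$ with $\ell_S(u_1)<\ell_S(u)$. In the base case $u=1$, the identity forces $v=1$.

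\textbf{A simpler alternative.} The paper gives no proof, citing Bourbaki and Davis. The usual argument avoids one-sided cosets and Deodhar's lemma altogether. Fix $w_0$ of minimal length in $D$. Write $w=uw_0v$ with $\ell_S(u)+\ell_S(v)$ minimal, and apply the deletion condition to the concatenation of reduced words for $u$, $w_0$, $v$.
\begin{itemize}
\item Two deleted letters in the same block contradict reducedness.
\item A pair meeting $w_0$ and $u$ (or $v$) produces a shorter element of $D$.
\item A pair meeting $u$ and $v$ contradicts the minimality of $\ell_S(u)+\ell_S(v)$.
\end{itemize}
This gives additivity, and uniqueness of $w_0$ follows immediately.
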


\begin{corl}\label{corl2_7}
{\it Let $X \subseteq S$ and $w \in W [\Gamma]$.
\begin{itemize}
\item[(1)]
$w$ is $(\emptyset,X)$-minimal if and only if $\ell_S (wx) > \ell_S (w)$ for all $x \in X$, and $\ell_S (wx) > \ell_S (w)$ for all $x \in X$ if and only if $\ell_S (wu) = \ell_S (w) + \ell_S (u)$ for all $u \in W [\Gamma_X]$.
\item[(2)]
$w$ is $(X,\emptyset)$-minimal if and only if $\ell_S(xw) > \ell_S(w)$ for all $x \in X$, and $\ell_S(xw) > \ell_S(w)$ for all $x \in X$ if and only if $\ell_S(uw) = \ell_S(u) + \ell_S(w)$ for all $u \in W[\Gamma_X]$.
\end{itemize}}
\end{corl}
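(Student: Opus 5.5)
We prove part (1); part (2) then follows by applying (1) to $w^{-1}$, since $\ell_S(w^{-1}) = \ell_S(w)$, $\ell_S(xw) = \ell_S(w^{-1}x)$, and $u \mapsto u^{-1}$ is a length-preserving bijection of $W[\Gamma_X]$. The plan for (1) is to establish a cycle of implications. Label the three conditions: (a) $w$ is $(\emptyset,X)$-minimal; (b) $\ell_S(wx) > \ell_S(w)$ for all $x \in X$; (c) $\ell_S(wu) = \ell_S(w) + \ell_S(u)$ for all $u \in W[\Gamma_X]$. We show (a) $\Rightarrow$ (c) $\Rightarrow$ (b) $\Rightarrow$ (a). This gives both equivalences stated in (1).

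\emph{The step (c) $\Rightarrow$ (b).} Take $u = x$. Since $x \neq 1$ in $W[\Gamma]$, we have $\ell_S(x) = 1$, so $\ell_S(wx) = \ell_S(w) + 1 > \ell_S(w)$.

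\emph{The step (a) $\Rightarrow$ (c).} Let $w$ be $(\emptyset,X)$-minimal and let $u \in W[\Gamma_X]$. Apply Lemma~\ref{lem2_6} with the pair $(\emptyset, X)$ to the element $wu$, whose coset is $(wu)W[\Gamma_X] = wW[\Gamma_X]$. By the uniqueness in Lemma~\ref{lem2_6}, the minimal element of this coset is $w$ itself. So there is $v \in W[\Gamma_X]$ with $wu = wv$ and $\ell_S(wu) = \ell_S(w) + \ell_S(v)$. Cancelling $w$ gives $v = u$, and (c) follows.

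\emph{The step (b) $\Rightarrow$ (a), the main point.} Suppose (b) holds but $w$ is not minimal. By Lemma~\ref{lem2_6}, write $w = w_0 v$ with $w_0$ the minimal element, $v \in W[\Gamma_X]$, $v \neq 1$ and $\ell_S(w) = \ell_S(w_0) + \ell_S(v)$. Take a reduced expression $v = x_1 \cdots x_k$ over $S$. By Proposition~\ref{prop2_2}, all $x_i$ lie in $X$. Then $wx_k = w_0 x_1 \cdots x_{k-1}$ has length at most $\ell_S(w_0) + k - 1 = \ell_S(w) - 1$. This contradicts (b) for $x = x_k \in X$. Hence $w = w_0$ is minimal.

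The only delicate points are the use of uniqueness in Lemma~\ref{lem2_6} and the support argument via Proposition~\ref{prop2_2}. The latter guarantees that the last letter of a reduced word for $v$ lies in $X$. Neither presents a real obstacle.
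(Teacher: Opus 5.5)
Your proof is correct and takes the route the paper intends: the paper states this as an immediate corollary of Lemma~\ref{lem2_6} and writes out no proof. Your cycle (a)$\Rightarrow$(c)$\Rightarrow$(b)$\Rightarrow$(a) uses the uniqueness and the length-additive decomposition from Lemma~\ref{lem2_6}, together with Proposition~\ref{prop2_2} to put the last letter of a reduced word for $v$ in $X$, and reduces (2) to (1) via $w \mapsto w^{-1}$; this is exactly the argument the paper leaves implicit.
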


Let $X \subseteq S$.
We define a map $\pi_X^* : (S \sqcup S^{-1})^* \to (X \sqcup X^{-1})^*$ as follows.
Let $\omega = s_1^{\varepsilon_1} s_2^{\varepsilon_2} \cdots s_l^{\varepsilon_l} \in (S \sqcup S^{-1})^*$.
We set $u_0=1 \in W [\Gamma]$ and, for $1 \le i \le l$, we set $u_i = s_1 s_2 \cdots s_i \in W [\Gamma]$.
We write each $u_i$ in the form $u_i = v_i w_i$, where $v_i \in W [\Gamma_X]$ and $w_i$ is $(X, \emptyset)$-minimal.
For $1 \le i \le l$ we set 
\[
t_i = \left\{ \begin{array}{ll}
w_{i-1} s_i w_{i-1}^{-1} & \text{if } \varepsilon_i=1\,,\\
w_i s_i w_i^{-1} & \text{if } \varepsilon_i=-1\,.
\end{array} \right.
\]
Then, we set
\[
\tau_i = \left\{ \begin{array}{ll}
1 &\text{if } t_i \not \in X\,,\\
t_i^{\varepsilon_i} &\text{if } t_i \in X\,.
\end{array} \right.
\]
Finally, we define
\[
\pi_X^* (\omega) = \tau_1 \tau_2 \cdots \tau_l \in (X \sqcup X^{-1})^*\,.
\]

\begin{rem}\label{rem2_8}
\begin{itemize}
\item[(1)] 
We have $\pi_X^* (\omega) = \omega$ for any $\omega \in (X \sqcup X^{-1})^*$.
\item[(2)] 
The word $\pi_X^* (\omega)$ can be computed algorithmically for any $\omega \in (S \sqcup S^{-1})^*$ using the results from Subsection \ref{subsec2_1}.
\item[(3)] 
We have $\ell (\pi_X^* (\omega)) \le \ell (\omega)$ for all $\omega \in (S \sqcup S^{-1})^*$.
\end{itemize}
\end{rem}

The following is proven in \cite[Proposition 2.3]{BP23} (see also \cite[Proposition 0.2]{God23}).

\begin{thm}[Blufstein--Paris \cite{BP23}]\label{thm2_9}
Let $X \subseteq S$.
Let $\omega, \mu \in (S \sqcup S^{-1})^*$ be words representing the same element in $A [\Gamma]$.
Then $\pi_X^* (\omega)$ and $\pi_X^* (\mu)$ represent the same element in $A [\Gamma_X]$.
In other words, the map $\pi_X^* : (S \sqcup S^{-1})^* \to (X \sqcup X^{-1})^*$ induces a map $\pi_X : A[\Gamma] \to A [\Gamma_X]$.
\end{thm}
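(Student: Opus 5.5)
The plan is to show that $\pi_X^*$ is compatible with the elementary moves that generate equality in $A[\Gamma]$. Two words represent the same element of $A[\Gamma]$ if and only if one is obtained from the other by a finite sequence of elementary operations: insertion or deletion of a factor $s^{\varepsilon}s^{-\varepsilon}$, and replacement of a factor $\Pi(s,t,m_{s,t})$ by $\Pi(t,s,m_{s,t})$, or of the inverse factors likewise. It therefore suffices to treat a single such operation $\omega=\alpha\beta\gamma \mapsto \mu=\alpha\beta'\gamma$.

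The first step is a locality observation. Since the $u_i$ depend only on the image in $W[\Gamma]$, where both $\beta$ and $\beta'$ have the same image, the prefixes $\alpha$ and $\alpha\beta$ give the same elements $u_i$, $v_i$, $w_i$ in $\omega$ and in $\mu$. Hence the letters $\tau_i$ contributed by $\alpha$ and by $\gamma$ coincide in $\pi_X^*(\omega)$ and $\pi_X^*(\mu)$. It remains to compare the blocks coming from $\beta$ and from $\beta'$, which both start from the same $(X,\emptyset)$-minimal element $w=w_{|\alpha|}$, with $u_{|\alpha|}=vw$.

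For the free cancellation move $\beta=s^{\varepsilon}s^{-\varepsilon}$ (and $\beta'$ empty), check directly from the definition that the two letters produce $t$ and $t^{-1}$ with the same $t=w s w^{-1}$ or $t=w' s w'^{-1}$, where $w'$ is the minimal part of $us$. For instance, if $\varepsilon=1$ the first letter uses $w_{i-1}=w$ and the second, with exponent $-1$, uses $w_{i+1}=w$ again. Both contributions are thus $1$ or $t t^{-1}$, which equals $1$ in $A[\Gamma_X]$. The case $\varepsilon=-1$ is symmetric.

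The braid move is the main obstacle. Put $Y=\{s,t\}$. By Lemma \ref{lem2_6} applied to $(X,Y)$-double cosets, write $w=w_0 y$ with $w_0$ $(X,Y)$-minimal and $y\in W[\Gamma_Y]$. The successive minimal parts along $\beta$ are then of the form $(\text{element of } X\text{-part})\cdot w_0\cdot(\text{element of } W[\Gamma_Y])$. Each emitted letter is $w_0\, y' r y'^{-1} w_0^{-1}$ conjugated appropriately, with $r\in Y$. The key input I would use is Deodhar/Kilmoyer's lemma: $w_0^{-1}W[\Gamma_X]w_0\cap W[\Gamma_Y]=W[\Gamma_Z]$ for some $Z\subseteq Y$, and $w_0$ conjugates $Z$ to a subset $Z'$ of $X$. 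The case analysis is then as follows.
\begin{itemize}
\item If $Z=\emptyset$, no letter in either block lies in $X$, since each $t_i$ lies in $X$ only if the corresponding reflection of $W[\Gamma_Y]$ lies in $W[\Gamma_Z]$. Both blocks are empty.
\item If $Z=\{r\}$ is a single letter, then careful bookkeeping of where $y'ry'^{-1}=r$ along the alternating words shows that both blocks reduce to the same power of the letter $w_0 r w_0^{-1}\in X$. If $m_{s,t}$ is odd, $s$ and $t$ are conjugate, so this case cannot occur with $Z$ a singleton.
\item If $Z=Y$, the blocks are exactly $\Pi(s',t',m)$ and $\Pi(t',s',m)$, with $s'=w_0sw_0^{-1}$ and $t'=w_0tw_0^{-1}$ in $X$ and $m_{s',t'}=m_{s,t}$. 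This is a defining relation of $A[\Gamma_X]$.
\end{itemize}
I expect the middle case, together with tracking how the $(X,\emptyset)$-minimal representative evolves inside $W[\Gamma_Y]$ when $y\neq 1$, to require the most care. I would first reduce to $y=1$ by noting that left-multiplying the braid word by $y$ permutes the computation in a controlled way.

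Finally, inverse braid relations, in which all letters have exponent $-1$, are handled by the same computation using the $\varepsilon=-1$ clause of the definition.
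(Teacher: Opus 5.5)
The paper gives no proof of this statement: it is quoted from \cite[Proposition 2.3]{BP23}, so your plan has to stand on its own. Its skeleton is sound: reduction to elementary moves, locality, the free-cancellation check, and the reduction of a braid move to $W[\Gamma_Y]$ via $w=w_0y$ and Kilmoyer's $Z=Y\cap w_0^{-1}Xw_0$. The cases $Z=\emptyset$ and $Z=Y$ are fine. In the latter, $y=1$ is forced, since otherwise $w=(w_0yw_0^{-1})\,w_0$ with $1\neq w_0yw_0^{-1}\in W[\Gamma_X]$ would not be $(X,\emptyset)$-minimal.

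The gap is the singleton case, which is the only one with real content. Your assertion that it cannot occur when $m_{s,t}$ is odd is false: conjugacy of $s$ and $t$ inside $W[\Gamma_Y]$ says nothing about whether $w_0tw_0^{-1}$ lies in $X$. For $X=\{s\}$, $m_{s,t}=3$ and $w=1$ one has $w_0=1$ and $Z=\{s\}$. A direct computation gives $\pi_X^*(sts)=s=\pi_X^*(tst)$, with the surviving letter coming from the first position of one block and from the last position of the other. So the odd case must be treated, and for the even case you only announce ``careful bookkeeping''. The proposed reduction to $y=1$ is not justified either, because changing $y$ changes which positions emit a letter.

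The missing mechanism works for both parities and all $y$. By the standard refinement of Lemma~\ref{lem2_6}, every $g\in W[\Gamma_Y]$ factors as $g=az$ with $a\in W[\Gamma_Z]$ and $z$ of minimal length in $W[\Gamma_Z]z$. Moreover $\ell_S(a'w_0z)=\ell_S(a')+\ell_S(w_0)+\ell_S(z)$ for all $a'\in W[\Gamma_X]$, so $w_0z$ is the $(X,\emptyset)$-minimal part of $w_0g$. Write the block as $r_1\cdots r_m$, and put $p_k=r_1\cdots r_k$ and $yp_{k-1}=az$. Then the $k$-th letter emits iff $zr_kz^{-1}\in Z$, and in that case it emits $w_0zr_kz^{-1}w_0^{-1}$ with exponent $+1$. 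If $Z=\{r\}$, then $a\in\{1,r\}$ commutes with $r$, so the condition becomes $p_{k-1}r_kp_{k-1}^{-1}=y^{-1}ry$. Since $r_1\cdots r_m$ is a reduced expression of the longest element of $W[\Gamma_Y]$, the reflections $p_{k-1}r_kp_{k-1}^{-1}$, $1\le k\le m$, are the $m$ reflections of $W[\Gamma_Y]$, each exactly once, and $y^{-1}ry$ is one of them. Hence exactly one position emits, and both $\Pi(s,t,m)$ and $\Pi(t,s,m)$ reduce to the single letter $w_0rw_0^{-1}\in X$.

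With this inserted, your proof is complete. It is the combinatorial form of the cellular retraction of Salvetti complexes (Godelle--Paris, Charney--Paris), under which a $2m$-gonal $2$-cell goes to a vertex, an edge or a $2$-cell according to $|Z|$.
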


\begin{rem}\label{rem2_10}
The map $\pi_X$ from Theorem \ref{thm2_9} is not, in general, a homomorphism.
\end{rem}

\begin{prop}[Van der Lek \cite{vLek83}, Charney--Paris \cite{CP14}]\label{prop2_11}
Let $X \subseteq S$.
The natural homomorphism $\iota_X : A [\Gamma_X] \to A [\Gamma]$ that maps $x$ to $x$ for all $x \in X$ is injective and preserves word length.
\end{prop}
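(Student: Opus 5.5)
The plan is to use the map $\pi_X : A[\Gamma] \to A[\Gamma_X]$ of Theorem \ref{thm2_9} as a retraction onto the image of $\iota_X$. Both injectivity of $\iota_X$ and preservation of word length should then follow from one computation.

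The key step is to show that $\pi_X \circ \iota_X = \id_{A[\Gamma_X]}$. Let $g \in A[\Gamma_X]$ and choose a word $\omega \in (X \sqcup X^{-1})^*$ representing $g$. Then $\omega$, viewed as a word over $S \sqcup S^{-1}$, represents $\iota_X(g)$ in $A[\Gamma]$. By Theorem \ref{thm2_9}, $\pi_X(\iota_X(g))$ is the element of $A[\Gamma_X]$ represented by $\pi_X^*(\omega)$. By Remark \ref{rem2_8}\,(1), $\pi_X^*(\omega) = \omega$, so $\pi_X(\iota_X(g)) = g$. Hence $\iota_X$ has a set-theoretic left inverse and is injective. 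This needs no homomorphism property of $\pi_X$, which matters in view of Remark \ref{rem2_10}. The only point to double-check is Remark \ref{rem2_8}\,(1) itself. For a word over $X \sqcup X^{-1}$, each prefix $u_i$ lies in $W[\Gamma_X]$, so $w_i = 1$, every $t_i = s_i \in X$, and $\tau_i = s_i^{\varepsilon_i}$.

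For word length, write $\ell_X$ and $\ell_S$ for the word lengths in $A[\Gamma_X]$ and $A[\Gamma]$ with respect to $X \sqcup X^{-1}$ and $S \sqcup S^{-1}$. The inequality $\ell_S(\iota_X(g)) \le \ell_X(g)$ is immediate, since any word over $X \sqcup X^{-1}$ is a word over $S \sqcup S^{-1}$. For the reverse inequality, let $\mu \in (S \sqcup S^{-1})^*$ be a word of length $\ell_S(\iota_X(g))$ representing $\iota_X(g)$. Since $\mu$ and $\omega$ represent the same element of $A[\Gamma]$, Theorem \ref{thm2_9} shows that $\pi_X^*(\mu)$ represents $\pi_X(\iota_X(g)) = g$ in $A[\Gamma_X]$. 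By Remark \ref{rem2_8}\,(3),
\[
\ell_X(g) \le \ell(\pi_X^*(\mu)) \le \ell(\mu) = \ell_S(\iota_X(g)) \,.
\]
Together the two inequalities give equality.

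I do not expect a real obstacle, since all the depth sits in Theorem \ref{thm2_9}, which is assumed. The only care needed is to avoid treating $\pi_X$ as a homomorphism and to apply Theorem \ref{thm2_9} to two words representing the same element.
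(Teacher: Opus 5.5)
Your proposal is correct and follows the paper's own proof. Both establish $\pi_X \circ \iota_X = \id_{A[\Gamma_X]}$ via Remark \ref{rem2_8}\,(1) and Theorem \ref{thm2_9} to get injectivity, then apply $\pi_X^*$ to a geodesic word for $\iota_X(g)$ and use Remark \ref{rem2_8}\,(3) to obtain $\ell_X(g) \le \ell_S(\iota_X(g))$. Your extra check of Remark \ref{rem2_8}\,(1) ($w_i = 1$ for prefixes lying in $W[\Gamma_X]$) is a sound addition.
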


\begin{proof}
From Remark \ref{rem2_8}\,(1), it follows that $\pi_X \circ \iota_X = \id_{A [\Gamma_X]}$, hence $\iota_X$ is injective.
Let $\ell_S : A [\Gamma] \to \N$ denote the word length with respect to $S$.
Let $g \in A [\Gamma_X]$.
Since $X \subseteq S$, we have $\ell_X (g) \ge \ell_S (\iota_X (g))$.
Let $\omega$ be a word over $S \sqcup S^{-1}$ representing $\iota_X(g)$ such that $\ell_S (\iota_X (g)) = \ell (\omega)$.
The word $\pi_X^* (\omega)$ represents $g = \pi_X (\iota_X(g))$, hence, by Remark \ref{rem2_8}\,(3),
\[
\ell_S (\iota_X(g)) = \ell (\omega) \ge \ell (\pi_X^* (\omega)) \ge \ell_X (g)\,.
\]
So, $\ell_X (g) = \ell_S(\iota_X (g))$.
\end{proof}

From now on, for $X \subseteq S$, we identify $A [\Gamma_X]$ with its image $A_X [\Gamma]$ in $A [\Gamma]$ under the homomorphism $\iota_X$.

\begin{prop}[Van der Lek \cite{vLek83}]\label{prop2_12}
Let $X, Y \subseteq S$. 
Then $A [\Gamma_X] \cap A [\Gamma_Y] = A [\Gamma_{X \cap Y}]$.
\end{prop}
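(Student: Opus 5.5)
We prove $A [\Gamma_X] \cap A [\Gamma_Y] = A [\Gamma_{X \cap Y}]$ using the retraction maps $\pi_X$ of Theorem \ref{thm2_9}, in the same spirit as the proof of Proposition \ref{prop2_11}. The inclusion $A [\Gamma_{X \cap Y}] \subseteq A [\Gamma_X] \cap A [\Gamma_Y]$ is obvious, so it suffices to take $g \in A [\Gamma_X] \cap A [\Gamma_Y]$ and show that $g \in A [\Gamma_{X \cap Y}]$.

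Since $g \in A [\Gamma_Y]$, we may choose a word $\mu$ over $Y \sqcup Y^{-1}$ representing $g$. Since $g \in A [\Gamma_X]$, Remark \ref{rem2_8}\,(1) (applied to a word over $X$ representing $g$) together with Theorem \ref{thm2_9} gives $\pi_X (g) = g$. Hence the word $\pi_X^* (\mu)$, which is a word over $X \sqcup X^{-1}$, represents $g$ in $A [\Gamma_X] \subseteq A [\Gamma]$.

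The key step is to check that $\pi_X^* (\mu)$ only involves letters of $X \cap Y$. Write $\mu = s_1^{\varepsilon_1} \cdots s_l^{\varepsilon_l}$ with all $s_i \in Y$. Each $u_i = s_1 \cdots s_i$ lies in $W [\Gamma_Y]$. Decompose $u_i = v_i w_i$ with $v_i \in W [\Gamma_X]$ and $w_i$ $(X,\emptyset)$-minimal. By Corollary \ref{corl2_7}, this is a length-additive decomposition, so concatenating reduced words for $v_i$ and $w_i$ gives a reduced word for $u_i$. By Proposition \ref{prop2_2}, $\supp (w_i) \subseteq \supp (u_i) \subseteq Y$. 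Consequently $w_{i-1} s_i w_{i-1}^{-1}$ and $w_i s_i w_i^{-1}$ lie in $W [\Gamma_Y]$.

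Whenever such a $t_i$ is retained in $\pi_X^* (\mu)$, it is an element of $X$ viewed in $W [\Gamma]$. Since it also lies in $W [\Gamma_Y]$, its support is $\{t_i\} \subseteq Y$ by Proposition \ref{prop2_2}. Thus $t_i \in X \cap Y$. Therefore $\pi_X^* (\mu)$ is a word over $(X \cap Y) \sqcup (X \cap Y)^{-1}$ representing $g$, and so $g \in A [\Gamma_{X \cap Y}]$.

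The main point needing care is the support claim $\supp (w_i) \subseteq Y$. It follows because a reduced word for $u_i$ can be taken as the concatenation of reduced words for $v_i$ and $w_i$, and the support is independent of the chosen reduced expression by Theorem \ref{thm2_1}\,(2). The rest of the argument is formal.
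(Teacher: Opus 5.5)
Your proof is correct and follows essentially the same route as the paper: take a word over $Y \sqcup Y^{-1}$ representing $g$, observe that its image under $\pi_X^*$ is a word over $(X \cap Y) \sqcup (X \cap Y)^{-1}$, and conclude using $\pi_X(g) = g$. The only difference is that you justify two steps the paper simply asserts: you use the length-additivity of Corollary~\ref{corl2_7} and the support criterion of Proposition~\ref{prop2_2} to show that $w_i \in W[\Gamma_Y]$ and that any retained $t_i$ lies in $Y$.
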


\begin{proof}
The inclusion $A [\Gamma_{X \cap Y}] \subseteq A [\Gamma_X] \cap A [\Gamma_Y]$ is obvious, hence we only need to prove the reverse inclusion. 
Let $g \in A [\Gamma_X] \cap A [\Gamma_Y]$.
We choose a word $\omega = s_1^{\varepsilon_1} \cdots s_n^{\varepsilon_n} \in (Y \sqcup Y^{-1})^*$ such that $\overline{\omega} = g$.
Since $\omega \in (Y \sqcup Y^{-1})^*$, in the definition of $\pi_X^* (\omega)$ given above, we see that $u_i \in W [\Gamma_Y]$, which implies $v_i, w_i \in W [\Gamma_Y]$, for all $0 \le i \le l$, and $s_i \in Y$ for all $1 \le i \le l$.
This implies that either $\tau_i \in (Y \sqcup Y^{-1})$ or $\tau_i=1$ for all $1 \le i \le l$, hence $\pi_X^* (\omega) \in (Y \sqcup Y^{-1})^*$.
As we also have $\pi_X^* (\omega) \in (X \sqcup X^{-1})^*$, it follows that $\pi_X^* (\omega) \in ((X \cap Y) \sqcup (X \cap Y)^{-1})^*$.
Since $g \in A [\Gamma_X]$, we conclude that $g = \pi_X (g) = \overline{\pi_X^* (\omega)} \in A [\Gamma_{X \cap Y}]$.
\end{proof}

\begin{prop}\label{prop2_13}
Suppose that the word problem in $A [\Gamma]$ is solvable.
Let $X \subseteq S$.
Then the strong membership problem for $A [\Gamma_X]$ in $A [\Gamma]$ is solvable.
\end{prop}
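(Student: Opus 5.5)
The plan is to use the retraction map $\pi_X$ of Theorem \ref{thm2_9} as a candidate normal form for membership. Given a word $\omega \in (S \sqcup S^{-1})^*$, we first compute the word $\pi_X^*(\omega) \in (X \sqcup X^{-1})^*$. By Remark \ref{rem2_8}\,(2), this computation is algorithmic: it only requires computations in the Coxeter group $W[\Gamma]$. Specifically, we must compute, for each prefix $u_i$, the decomposition $u_i = v_i w_i$ with $w_i$ being $(X,\emptyset)$-minimal. We must also decide whether $t_i$ belongs to $X$. Both tasks are handled by reducing words via M-operations (Theorem \ref{thm2_1}) and by the strong membership algorithm of Proposition \ref{prop2_5}.

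To obtain $w_i$, we would proceed as follows. Starting from a reduced word for $u_i$, we repeatedly check whether $\ell_S(xw) < \ell_S(w)$ for some $x \in X$, using M-reduction to compute lengths. If so, we replace $w$ by $xw$. This process terminates, since length strictly decreases, and it ends at an $(X,\emptyset)$-minimal element by Corollary \ref{corl2_7}\,(2). To decide whether $t_i \in X$, we compute a reduced word for $t_i$ and check whether it is a single letter in $X$. This is valid because reduced expressions of the same element have equal length, and the support is well defined by Theorem \ref{thm2_1}.

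Next, we use the solvable word problem in $A[\Gamma]$ to decide whether $\overline{\omega} = \overline{\pi_X^*(\omega)}$ in $A[\Gamma]$, viewing $\pi_X^*(\omega)$ as a word over $S \sqcup S^{-1}$ via the embedding $\iota_X$ of Proposition \ref{prop2_11}. The key claim is that $\overline{\omega} \in A[\Gamma_X]$ if and only if this equality holds.

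One direction is trivial: if the equality holds, then $\overline{\omega}$ is represented by a word over $X \sqcup X^{-1}$, so it lies in $A[\Gamma_X]$. For the converse, suppose $\overline{\omega} = g \in A[\Gamma_X]$. Choose any word $\mu \in (X \sqcup X^{-1})^*$ representing $g$. Then $\omega$ and $\mu$ represent the same element of $A[\Gamma]$, so by Theorem \ref{thm2_9} the words $\pi_X^*(\omega)$ and $\pi_X^*(\mu)$ represent the same element of $A[\Gamma_X]$. By Remark \ref{rem2_8}\,(1), $\pi_X^*(\mu) = \mu$, so $\overline{\pi_X^*(\omega)} = g = \overline{\omega}$, and the identification via $\iota_X$ keeps this consistent in $A[\Gamma]$. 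Consequently, in the positive case the word $\pi_X^*(\omega)$ itself is the required word over $X \sqcup X^{-1}$, which gives the strong form of the membership problem.

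The only step requiring real care is the algorithmic computation of $\pi_X^*(\omega)$, in particular the extraction of the minimal coset representatives $w_i$. This step is handled by the descent procedure described above together with Lemma \ref{lem2_6}. The logical core of the argument is immediate from Theorem \ref{thm2_9}.
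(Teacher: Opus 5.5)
Your proposal is correct and follows essentially the same route as the paper: compute $\mu = \pi_X^*(\omega)$, then use the word problem in $A[\Gamma]$ to test whether $\overline{\omega} = \overline{\mu}$, with Theorem~\ref{thm2_9} and Remark~\ref{rem2_8}\,(1) giving that $g \in A[\Gamma_X]$ if and only if $g = \pi_X(g)$. Your descent procedure for the $(X,\emptyset)$-minimal representatives $w_i$ and your test for $t_i \in X$ only make explicit what the paper cites as Remark~\ref{rem2_8}\,(2).
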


\begin{proof}
Let $\omega \in (S \sqcup S^{-1})^*$.
Let $g = \overline{\omega}$ and let $\mu = \pi_X^* (\omega) \in (X \sqcup X^{-1})^*$.
As noted in Remark \ref{rem2_8}\,(2), the word $\mu$ can be computed algorithmically.
Furthermore, we have $g \in A [\Gamma_X]$ if and only if$g = \pi_X (g)$, that is, $\overline{\omega} = \overline{\mu}$.
Let $\nu = \omega \mu^{-1}$.
Using the solution to the word problem in $A [\Gamma]$, we can decide whether $\overline{\nu} = 1$.
If $\overline{\nu} \neq 1$, then $g \neq \pi_X (g)$, hence $g \not \in A [\Gamma_X]$.
If $\overline{\nu} = 1$, then $g = \pi_X (g) \in A [\Gamma_X]$, and $\mu$ is a word over $X \sqcup X^{-1}$ representing $g$.
\end{proof}

\subsection{Virtual Artin groups}\label{subsec2_3}

There is a homomorphism $\pi_K : \VA [\Gamma] \to W [\Gamma]$ defined by
\[
\pi_K (\sigma_s) = 1 \text{ and } \pi_K (\tau_s) = s\,, \text{ for all } s \in S\,.
\]
The kernel of $\pi_K$ is called the \emph{kure virtual Artin group} of $\Gamma$ and is denoted by $\KVA [\Gamma]$.
Furthermore, there is a homomorphism $\iota_W : W [\Gamma] \to \VA [\Gamma]$ that maps  $s$ to $\tau_s$ for all $s \in S$.
It is easily seen that $\pi_K \circ \iota_W = \id_{W [\Gamma]}$, hence $\iota_W$ is injective, $\pi_K$ is surjective, and we have the semidirect product decomposition $\VA [\Gamma] = \KVA [\Gamma] \rtimes W [\Gamma]$.

Let $\Pi = \{ \alpha_s \mid s \in S \}$ be a set in one-to-one correspondence with $S$.
Let $V$ be the real vector space with $\Pi$ as a basis, and let $\langle \cdot , \cdot \rangle : V \times V \to \R$ be the symmetric bilinear form defined by
\[
\langle \alpha_s, \alpha_t \rangle = \left\{ \begin{array}{ll}
-2\, \cos(\pi/m_{s,t}) & \text{if } m_{s,t} \neq \infty\,,\\
-2 &\text{if } m_{s,t} = \infty\,.
\end{array} \right.
\]
There is a faithful linear representation $\rho : W \hookrightarrow \GL (V)$, called the \emph{canonical representation}, which preserves the bilinear form $\langle \cdot, \cdot \rangle$, defined by
\[
\rho(s) (v) = v - \langle v, \alpha_s \rangle \, \alpha_s\,, \quad \text{for } v \in V\,,\ s \in S\,.
\]
From now on, we assume that $W [\Gamma]$ is embedded into $\GL (V)$ via the representation $\rho$ and, for $w \in W [\Gamma]$ and $v \in V$, we write $w (v)$ in place of $\rho(w) (v)$.

The set $\Phi [\Gamma] = \{ w (\alpha_s) \mid w \in W [\Gamma]\,,\ s \in S \}$ is called the \emph{root system} of $\Gamma$.
A root $\beta \in \Phi [\Gamma]$ is said to be \emph{positive} (resp. \emph{negative}) if it can be written as $\beta = \sum_{s \in S} \lambda_s \alpha_s$, where $\lambda_s \ge 0$ for all $s \in S$ (resp. $\lambda_s \le 0$ for all $s \in S$).
We denote by $\Phi^+ [\Gamma]$ (resp. $\Phi^- [\Gamma]$) the set of positive roots (resp. negative roots). 
We know from \cite{Deo82} that $\Phi [\Gamma] = \Phi^+ [\Gamma] \sqcup \Phi^{-} [\Gamma]$ and $\Phi^- [\Gamma] = \{ - \beta \mid \beta \in \Phi^+ [\Gamma]\}$.

We define a Coxeter matrix $\hat M = (\hat m_{\beta, \gamma})_{\beta, \gamma \in \Phi [\Gamma]}$ over $\Phi [\Gamma]$ as follows.
\begin{itemize}
\item[(a)]
We set $\hat m_{\beta, \beta} = 1$ for all $\beta \in \Phi [\Gamma]$.
\item[(b)]
Let $\beta, \gamma \in \Phi [\Gamma]$ with $\beta \neq \gamma$.
Suppose there exist $s,t \in S$ and $w \in W [\Gamma]$ such that $w (\alpha_s) = \beta$, $w (\alpha_t) = \gamma$, and $m_{s,t} \neq \infty$.
Then we set $\hat m_{\beta, \gamma} = m_{s,t}$.
\item[(c)]
We set $\hat m_{\beta, \gamma} = \infty$ in the other cases.
\end{itemize}
Note that the definition of $\hat m_{\beta, \gamma}$ does not depend on the choice of $s,t \in S$ and $w \in W [\Gamma]$ in Case (b) (see \cite{BPT23}).
Following \cite{BPT23}, we denote by $\hat \Gamma$ the Coxeter graph of $\hat M$ and by $\{ \hat \delta_\beta \mid \beta \in \Phi [\Gamma] \}$ the standard generating set for $A [\hat \Gamma]$.

\begin{rem}\label{rem2_14}
It is known from \cite{Deo82} that $\Phi [\Gamma]$ is finite if and only if $W [\Gamma]$ is finite.
In particular, if $W [\Gamma]$ is infinite, then $\hat \Gamma$ is a Coxeter graph with infinitely many vertices.
However, as noted in \cite{BPT23}, the results on finitely generated Artin groups that we use can be easily extended to infinitely generated Artin groups.
\end{rem}

Let $\beta \in \Phi [\Gamma]$.
We choose $w \in W [\Gamma]$ and $s \in S$ such that $\beta = w (\alpha_s)$, and we set  $\delta_\beta = \iota_W (w)\, \sigma_s\, \iota_W (w)^{-1} \in \KVA [\Gamma]$.
According to \cite[Lemma 2.2]{BPT23}, this definition does not depend on the choice of $w$ and $s$.
The Coxeter graph $\hat \Gamma$ and the group $\KVA [\Gamma]$ are related by the following result proved in \cite[Theorem 2.3]{BPT23}.

\begin{thm}[Bellingeri--Paris--Thiel \cite{BPT23}]\label{thm2_15}
The map $\{ \hat \delta_\beta \mid \beta \in \Phi [\Gamma] \} \to \{ \delta_\beta \mid \beta \in \Phi [\Gamma]\}$, $\hat \delta_\beta \mapsto \delta_\beta$, induces an isomorphism $\varphi: A [\hat \Gamma] \to \KVA [\Gamma]$.
\end{thm}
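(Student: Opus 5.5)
The plan is to realize $\VA[\Gamma]$ as a semidirect product $A[\hat\Gamma] \rtimes W[\Gamma]$ and then compare the kernels of the projections onto $W[\Gamma]$. First, $W[\Gamma]$ acts on $A[\hat\Gamma]$ by $w \cdot \hat\delta_\beta = \hat\delta_{w(\beta)}$. This is an action by automorphisms because the defining relations of $A[\hat\Gamma]$ are preserved, since $\hat m_{w(\beta),w(\gamma)} = \hat m_{\beta,\gamma}$; this equality is immediate from the definition of $\hat M$, as $w(\beta)=(ww')(\alpha_s)$ whenever $\beta=w'(\alpha_s)$. Form $G = A[\hat\Gamma] \rtimes W[\Gamma]$ and let $p : G \to W[\Gamma]$ be the projection, whose kernel is $A[\hat\Gamma]$.

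Next, I define $\Psi : \VA[\Gamma] \to G$ by $\sigma_s \mapsto \hat\delta_{\alpha_s}$ and $\tau_s \mapsto s$, and check the relations.
\begin{itemize}
\item[(v1)] This holds because $\hat m_{\alpha_s,\alpha_t} = m_{s,t}$.
\item[(v2)] These are the Coxeter relations in $W[\Gamma]$.
\item[(v3)] Put $u = \Pi(t,s,m_{s,t}-1)$. The relation becomes $u\cdot\hat\delta_{\alpha_r} = \hat\delta_{\alpha_s}$, that is, $u(\alpha_r) = \alpha_s$. This is the standard computation in the rank-two root system of the dihedral group $W[\Gamma_{\{s,t\}}]$.
\end{itemize}
By construction $p \circ \Psi = \pi_K$, so $\Psi$ maps $\KVA[\Gamma]$ into $A[\hat\Gamma]$. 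Moreover, $\Psi(\delta_\beta) = w\cdot\hat\delta_{\alpha_s} = \hat\delta_\beta$ for $\beta = w(\alpha_s)$.

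Conversely, I define $\Theta : G \to \VA[\Gamma]$ by $\hat\delta_\beta \mapsto \delta_\beta$ and $w \mapsto \iota_W(w)$, and check the relations.
\begin{itemize}
\item[(a)] If $\hat m_{\beta,\gamma} = m < \infty$, with $\beta = w(\alpha_s)$, $\gamma = w(\alpha_t)$ and $m_{s,t}=m$, then the Artin relation between $\delta_\beta$ and $\delta_\gamma$ is the conjugate by $\iota_W(w)$ of relation (v1) for $\sigma_s,\sigma_t$.
\item[(b)] The semidirect product relations $\iota_W(v)\,\delta_\beta\,\iota_W(v)^{-1} = \delta_{v(\beta)}$ hold by definition of $\delta_\beta$.
\end{itemize}
Both composites $\Theta\circ\Psi$ and $\Psi\circ\Theta$ are the identity on generators. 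Hence $\Psi$ is an isomorphism intertwining $\pi_K$ and $p$, so it restricts to an isomorphism $\KVA[\Gamma] \to A[\hat\Gamma]$ whose inverse is $\varphi$.

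The main obstacle is the well-definedness of $\delta_\beta$, which is \cite[Lemma 2.2]{BPT23} and which the definition of $\Theta$ relies on. It amounts to showing: if $u \in W[\Gamma]$ and $u(\alpha_s) = \alpha_t$, then $\iota_W(u)\,\sigma_s\,\iota_W(u)^{-1} = \sigma_t$. I would prove this by induction on $\ell_S(u)$, using the known description (Deodhar, Brink--Howlett) of the elements sending a simple root to a simple root. Such an element factors as a product of dihedral elements $\Pi(t',s',m_{s',t'}-1)$, each sending one simple root to another, with the lengths adding. Each factor is then handled by relation (v3). The delicate point is extracting this factorization: I would peel off a last letter $x$ with $\ell_S(ux) < \ell_S(u)$ and work inside the rank-two parabolic $W[\Gamma_{\{s,x\}}]$, using Lemma~\ref{lem2_6}.
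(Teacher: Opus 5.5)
Your proof is correct. The paper does not prove this statement: it imports it from \cite[Theorem 2.3]{BPT23}. It likewise imports from \cite[Lemma 2.2]{BPT23} the well-definedness of $\delta_\beta$, which is the one nontrivial input to your map $\Theta$.

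So your argument is not a competing route but a self-contained reconstruction of the cited result, and a natural one. Comparing $\VA[\Gamma]$ with $A[\hat\Gamma]\rtimes W[\Gamma]$ through the mutually inverse maps $\Psi$ and $\Theta$ gives both the theorem and the decomposition $\VA[\Gamma]=\KVA[\Gamma]\rtimes W[\Gamma]$. Your relation checks are all fine. For (v3), note that $\Pi(t,s,m_{s,t}-1)=w_{\{s,t\}}\,r$, where $w_{\{s,t\}}$ is the longest element of $W[\Gamma_{\{s,t\}}]$, and this element sends $\alpha_r$ to $\alpha_s$.

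Your sketch of the well-definedness step also works, and positivity is what makes it work.
\begin{itemize}
\item[(i)] Write $u=u_0v$ as in Lemma~\ref{lem2_6}, with $u_0$ $(\emptyset,\{s,x\})$-minimal and $v\in W[\Gamma_{\{s,x\}}]$. Then $u_0(\alpha_s)$ and $u_0(\alpha_x)$ are distinct positive roots.
\item[(ii)] Since $\ell_S(vs)>\ell_S(v)$, the root $v(\alpha_s)$ is positive. The equality $\alpha_t=u_0(v(\alpha_s))$ then forces $v(\alpha_s)\in\{\alpha_s,\alpha_x\}$.
\item[(iii)] Since $\ell_S(vx)<\ell_S(v)$, we have $v\neq 1$. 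Inside the dihedral group this forces $m_{s,x}<\infty$ and $v=w_{\{s,x\}}\,s$.
\item[(iv)] This $v$ is exactly the element handled by (v3), for a suitable labelling of the pair $\{s,x\}$. Induction on $\ell_S(u_0)<\ell_S(u)$ finishes the argument.
\end{itemize}
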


From now on we identify $A [\hat \Gamma]$ with its image $\KVA [\Gamma]$ in $\VA [\Gamma]$ under the homomorphism $\varphi$.

\subsection{Amalgamated products}\label{subsec2_4}

Let $G$ be a group and $H$ be a subgroup of $G$.
A \emph{transversal} of $H$ in $G$ is a subset $T \subseteq G$ such that for every $g \in G$, there exists a unique $\theta \in T$ such that $g H = \theta H$.
For convenience, we will always assume that $1 \in T$.
The following result is classical and is proven in \cite[Section 1.1, Théorème 1]{Ser77}.

\begin{thm}[Serre \cite{Ser77}]\label{thm2_16}
Let $G_1$, $G_2$, and $H$ be groups.
Suppose that $H$ is a subgroup of both $G_1$ and $G_2$, and set $G = G_1 *_H G_2$.
For each $j \in \{1,2\}$, we choose a transversal $T_j$ of $H$ in $G_j$.
Then every element $g \in G$ can be uniquely written in the form $g = \theta_1 \theta_2 \cdots \theta_l h$ where:
\begin{itemize}
\item[(a)]
$h \in H$ and, for each $1 \le i \le l$, there exists $j (i) \in \{1,2\}$ such that $\theta_i \in T_{j (i)} \setminus \{1\}$;
\item[(b)]
$j(i) \neq j(i+1)$ for all $1 \le i \le l-1$.
\end{itemize}
In particular, $g \in H$ if and only if $l=0$.
\end{thm}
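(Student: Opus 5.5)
This is Serre's normal form theorem for amalgamated free products, cited from \cite{Ser77}. I would prove it by the standard van der Waerden method, with existence and uniqueness handled separately.

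\emph{Existence.} Every element of $G$ is a product $g_1 g_2 \cdots g_k$ with each $g_i$ in $G_1$ or $G_2$. Multiplying neighbouring factors that lie in the same factor group, we may assume they alternate, although some may lie in $H$.

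We then rewrite from right to left. Suppose the rightmost factor lies in $G_j$. Write it as $\theta h$ with $\theta \in T_j$ and $h \in H$; this is possible since $g_k H = \theta H$ for a unique $\theta$. Now push $h$ into the preceding factor, which still lies in a factor group because $H \subseteq G_1 \cap G_2$.

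Whenever $\theta = 1$, the factor disappears. Its neighbours then lie in the same $G_{j'}$ and are merged, and the process restarts. Since the number of factors strictly decreases or the process moves one step left, it terminates. The result is a normal form satisfying (a) and (b).

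\emph{Uniqueness.} Let $\Omega$ be the set of all sequences $(\theta_1, \dots, \theta_l, h)$ satisfying (a) and (b). For each $j$, define an action of $G_j$ on $\Omega$ as follows. Take $g \in G_j$ and $\omega = (\theta_1, \dots, \theta_l, h) \in \Omega$.

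\begin{itemize}
\item If $l = 0$ or $j(1) \neq j$, set $x = gh$ and prepend nothing to the remaining entries.
\item If $j(1) = j$, set $x = g\theta_1 h$ and remove $\theta_1$ from the front.
\end{itemize}

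Then write $x = \theta' h'$ with $\theta' \in T_j$ and $h' \in H$. Prepend $\theta'$ to the remaining sequence if $\theta' \neq 1$, and replace the last entry by $h'$.

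Equivalently, $\Omega$ is in bijection with $T_j \times \Omega_j$, where $\Omega_j$ is the set of sequences not starting with an element of $T_j \setminus \{1\}$, twisted by $H$ on the right. From this description, the action is a genuine action of $G_j$ transported through the bijection $G_j \times_H \Omega_j \cong \Omega$. This bijection is the main check.

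The two actions agree on $H$, because elements of $H$ act by $h \cdot (\theta_1, \dots, \theta_l, h'')$ only via the first coordinate. More precisely, one checks that $H$ acts the same way under both definitions. By the universal property of the amalgam, we get an action of $G$ on $\Omega$.

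Applying $\theta_1 \theta_2 \cdots \theta_l h$ to the empty word $(\,;1)$ yields exactly $(\theta_1, \dots, \theta_l, h)$. Hence the normal form is recovered from $g$, which gives uniqueness.

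The last claim follows immediately. If $g \in H$, then $(\,;g)$ is a normal form, so $l = 0$ by uniqueness.

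The main obstacle is verifying that the $G_j$-action is well defined, that is, that it is an action and restricts compatibly to $H$. The cleanest way to do this is through the bijection $\Omega \cong G_j \times_H \Omega_j$.
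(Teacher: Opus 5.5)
The paper does not prove this result; it cites Serre. Your strategy is Serre's own, but you have carried over his formulas without adapting them to this statement's convention, and the step you flag as the main check fails. Serre writes $G_j=HT_j$ and normal forms $h\,\theta_1\cdots\theta_l$ with $h$ on the \emph{left}. Here each $y\in G_j$ is written $y=\theta h$, and $h$ sits on the \emph{right}.

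With this convention, each of your $G_j$-actions is a genuine action (your bijection shows this). However, the two actions do \emph{not} agree on $H$. Take $h_0\in H$ and $\omega=(\theta_1,\dots,\theta_l;h)$ with $l\ge 1$ and $j(1)=1$. Through $G_2$ you get $(\theta_1,\dots,\theta_l;h_0h)$. Through $G_1$ you form $x=h_0\theta_1h$ and get $(\theta',\theta_2,\dots,\theta_l;(\theta')^{-1}h_0\theta_1h)$, where $\theta'\in T_1$ represents $h_0\theta_1H$. These coincide exactly when $h_0$ commutes with $\theta_1$. For instance, take $G_1=S_3$, $H=\langle(1\,2)\rangle$, $T_1=\{1,(1\,3),(2\,3)\}$, $\theta_1=(1\,3)$ and $h_0=(1\,2)$; then already $h_0\theta_1H=(2\,3)H\neq\theta_1H$.

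Hence the universal property of the amalgam does not apply, and you have no $G$-action. Your computation ``$\theta_1\cdots\theta_lh$ applied to $(\,;1)$'' then depends on the word rather than on the element of $G$, so it gives no uniqueness. Your existence sweep has the same left/right defect: if $g_k=\theta h$, then $h$ sits at the far right, not next to $g_{k-1}$. With left transversals you must sweep from left to right: write $g_1=\theta_1h_1$, push $h_1$ into $g_2$, and so on, as in the proof of Corollary~\ref{corl2_17}.

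The repair is to mirror the construction and let $G_j$ act on the \emph{right} of $\Omega$ through the last letter. For $g\in G_j$, put $x=\theta_lhg$ and delete $\theta_l$ if $l\ge 1$ and $j(l)=j$; otherwise put $x=hg$. Then write $x=\theta'h'$, append $\theta'$ if $\theta'\neq 1$, and make $h'$ the last entry. This is again right multiplication transported through a bijection, namely the mirror of yours. Now $h_0\in H$ acts by $h\mapsto hh_0$ through either factor, because $\theta_lhh_0=\theta_l\cdot(hh_0)$ is already decomposed. So $G$ acts on the right, and $(\,;1)\cdot\theta_1\cdots\theta_lh=(\theta_1,\dots,\theta_l;h)$, which gives uniqueness. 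Existence follows as well, since the evaluation map $\Omega\to G$ is equivariant.

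Alternatively, run your left-action argument honestly in Serre's convention, with products $g\,h\,\theta_1$ and decompositions $x=h'\theta'$. This gives normal forms $h\,s_1\cdots s_n$ for $G_j=HS_j$. Then apply it to $g^{-1}$ with $S_j=T_j^{-1}$, noting that $G_j=T_jH$ gives $G_j=HT_j^{-1}$.
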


The expression for $g$ given in the above theorem is called the \emph{normal form} of $g$.
Note that it depends on the choice of transversals of $H$ in $G_1$ and in $G_2$.

In our study, we use the following corollary.
While this result is standard, a proof is included for the sake of completeness.

\begin{corl}\label{corl2_17}
Let $G_1$, $G_2$, and $H$ be groups.
Suppose that $H$ is a subgroup of $G_1$ and $G_2$, and set $G = G_1 *_H G_2$.
Let $g \in G$.
Suppose $g$ can be written in the form $g = g_1 g_2 \cdots g_l$ where:
\begin{itemize}
\item[(a)]
$l \ge 1$,
\item[(b)]
for all $1 \le i \le l$ there exists $j(i) \in \{1,2\}$ such that $g_i \in G_{j(i)} \setminus H$,
\item[(c)]
$j(i) \neq j(i+1)$ for all $1 \le i \le l-1$.
\end{itemize}
Then $g \not \in H$. In particular, $g \neq 1$.
\end{corl}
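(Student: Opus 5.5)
The plan is to reduce to the normal form theorem (Theorem \ref{thm2_16}) by rewriting each factor $g_i$ as a transversal element times an element of $H$, and then absorbing the $H$-parts one step to the right. First we fix transversals $T_1$ of $H$ in $G_1$ and $T_2$ of $H$ in $G_2$, both containing $1$. We then build the normal form of $g = g_1 g_2 \cdots g_l$ inductively from left to right. Set $h_0 = 1$. For $i = 1, \dots, l$, assume $h_{i-1} \in H$ has been defined. Since $g_i \in G_{j(i)}$ and $H \subseteq G_{j(i)}$, we have $h_{i-1} g_i \in G_{j(i)}$. We write $h_{i-1} g_i = \theta_i h_i$ with $\theta_i \in T_{j(i)}$ and $h_i \in H$, which is possible by the definition of a transversal.

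The key observation is that $\theta_i \neq 1$. Indeed, if $\theta_i = 1$, then $h_{i-1} g_i = h_i \in H$, hence $g_i = h_{i-1}^{-1} h_i \in H$, contradicting hypothesis (b). A telescoping computation then gives
\[
g = g_1 g_2 \cdots g_l = \theta_1 \theta_2 \cdots \theta_l h_l\,.
\]
To see this, note that $g_1 = \theta_1 h_1$, and for each $i \ge 2$ we have $h_{i-1} g_i = \theta_i h_i$. Multiplying these identities successively gives the formula.

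By construction we have $\theta_i \in T_{j(i)} \setminus \{1\}$ and $j(i) \neq j(i+1)$ by hypothesis (c). Thus the expression $\theta_1 \cdots \theta_l h_l$ satisfies conditions (a) and (b) of Theorem \ref{thm2_16}, so by uniqueness it is the normal form of $g$. Its number of transversal factors is $l \ge 1$, so Theorem \ref{thm2_16} gives $g \notin H$. Since $1 \in H$, it follows that $g \neq 1$.

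There is no real obstacle in this argument. The only point needing care is the observation $\theta_i \neq 1$, which is exactly where hypothesis (b) that $g_i \notin H$ is used.
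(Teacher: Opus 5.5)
Your proposal is correct and follows the paper's proof essentially step for step. Both fix transversals, inductively write $h_{i-1} g_i = \theta_i h_i$ with $\theta_i \neq 1$ because $g_i \notin H$, identify $\theta_1 \cdots \theta_l h_l$ as the normal form of $g$, and conclude $g \notin H$ from Theorem \ref{thm2_16} since $l \ge 1$.
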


\begin{proof}
For each $j \in \{1,2\}$ we choose a transversal $T_j$ of $H$ in $G_j$, and we consider normal forms with respect to this choice of transversals.
For $1 \le i \le l$, we define $\theta_i \in T_{j(i)} \setminus \{1\}$ and $h_i \in H$ by induction on $i$ as follows.
The element $\theta_1$ is the unique element of $T_{j(1)}$ such that $g_1 H = \theta_1 H$.
Since $g_1 \not \in H$, we have $\theta_1 \neq 1$.
We then set $h_1 = \theta_1^{-1} g_1$.
Assume that $i \ge 2$ and that $h_k$ and $\theta_k$ are defined for $k <i$. 
The element $\theta_i$ is the unique element of $T_{j(i)}$ such that $h_{i-1} g_i H = \theta_i H$.
Since $h_{i-1} g_i \not \in H$ (as $g_i \not \in H$ and $h_{i-1} \in H$), we have $\theta_i \neq 1$.
We then set $h_i = \theta_i^{-1} h_{i-1} g_i$.
By construction, the expression $\theta_1 \theta_2 \cdots \theta_l h_l$ is the normal form of $g$.
Since we are under the assumption that $l \ge 1$, by Theorem \ref{thm2_16} it follows that $g \not \in H$.
\end{proof}

The following proposition is the main tool for proving Theorem \ref{thm1_4} from Theorem \ref{thm1_6}.

\begin{prop}\label{prop2_18}
Let $G_1$, $G_2$, and $H$ be finitely generated groups.
Suppose that $H$ is a subgroup of $G_1$ and $G_2$, and set $G = G_1 *_H G_2$.
If the word problem in $G_j$ is solvable for each $j \in \{1,2\}$, and the strong membership problem for $H$ in $G_j$ is solvable for each $j \in \{1,2\}$, then the word problem in $G$ is solvable.
\end{prop}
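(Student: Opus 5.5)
We fix finite generating sets $S_1$ of $G_1$, $S_2$ of $G_2$ and $S_H$ of $H$. For each $h \in S_H$ we fix a word $\eta_{j,h}$ over $S_j \sqcup S_j^{-1}$ representing $h$ in $G_j$, for $j \in \{1,2\}$; these finitely many words are part of the data of the algorithm. The set $S_1 \sqcup S_2$ generates $G$. A word $\omega$ over $(S_1 \sqcup S_2)^{\pm 1}$ therefore decomposes uniquely as a product of syllables $\omega = \omega_1 \omega_2 \cdots \omega_l$. Each $\omega_i$ is a nonempty maximal subword over $S_{j(i)} \sqcup S_{j(i)}^{-1}$, and $j(i) \neq j(i+1)$. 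The algorithm is a reduction procedure on $l$.

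If $l = 0$, then $\overline{\omega} = 1$. Otherwise, for each $i$ we use the strong membership algorithm for $H$ in $G_{j(i)}$ to decide whether $\overline{\omega_i} \in H$.

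If no syllable represents an element of $H$, there are two cases. When $l \ge 1$ with every $\overline{\omega_i} \in G_{j(i)} \setminus H$ and alternating indices, Corollary \ref{corl2_17} gives $\overline{\omega} \neq 1$. When $l = 1$, the word lies in a single factor, and we decide whether $\overline{\omega} = 1$ with the word problem in $G_{j(1)}$. Note that $l=1$ with $\overline{\omega_1} \notin H$ is already covered by the corollary, and $l = 1$ with $\overline{\omega_1}\in H$ falls under the next step. In that case it is simplest to test triviality directly in $G_{j(1)}$, which is sound because $G_{j(1)}$ embeds in $G$ by Theorem \ref{thm2_16}.

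If some syllable $\omega_i$ with $l \ge 2$ satisfies $\overline{\omega_i} \in H$, the strong membership algorithm returns a word $\mu$ over $S_H^{\pm1}$ with $\overline{\mu} = \overline{\omega_i}$. We pick a neighbouring index $k = i \pm 1$, so that $j(k) \neq j(i)$. We replace $\omega_i$ by the word obtained from $\mu$ by substituting $\eta_{j(k),h}^{\pm1}$ for each letter $h^{\pm1}$. This word lies over $S_{j(k)}^{\pm1}$ and represents the same element of $G$, since $H$ is identified in both factors. It therefore merges with $\omega_{i-1}$ and $\omega_{i+1}$, which are both in $G_{j(k)}$, into a single syllable, so the new word has strictly fewer syllables ($l-1$ or $l-2$).

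By induction on $l$, the procedure terminates in one of the terminal cases above, and each step preserves $\overline{\omega}$. Correctness then rests on Corollary \ref{corl2_17} and on the injectivity $G_j \hookrightarrow G$, which follows from the normal form Theorem \ref{thm2_16}. The main point requiring care is the rewriting step: we need the membership algorithm in its strong form, because a witness word in the generators of $H$ is what lets us transport the syllable into the other factor. Mere decidability of membership would not suffice. Since the word lengths may grow during rewriting, termination must be argued via the syllable count rather than the length, and the syllable count strictly decreases at each step.
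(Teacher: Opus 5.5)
Your proof is correct and is essentially the paper's argument: you split the word into single-factor blocks, settle the one-block case with the word problem in that factor, and conclude $\overline{\omega}\neq 1$ from Corollary \ref{corl2_17} when no block of an alternating decomposition lies in $H$. Otherwise you use the strong-membership witness to transport a block lying in $H$ into the other factor and merge it with its neighbours, so the block count drops. The differences are cosmetic: you use maximal syllables from the outset, whereas the paper inducts on an arbitrary block decomposition and merges adjacent same-factor blocks as a separate step; and you make explicit the embedding $G_j \hookrightarrow G$, which the paper uses tacitly in the one-block case.
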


\begin{proof}
As ever, if $G$ is a group generated by a set $S$, and $\omega$ is a word over $S \sqcup S^{-1}$, we denote by $\overline{\omega}$ the element of $G$ represented by $\omega$.
We are given finite generating sets $S_1$, $S_2$, and $S_H$ of $G_1$, $G_2$, and $H$, respectively.
Furthermore, for each $j \in \{1,2\}$ and each generator $y \in S_H$, we are given a word $\iota_j (y) \in (S_j \sqcup S_j^{-1})^*$ representing $y$ in $G_j$.
We have to show that there exists an algorithm which, given a word $\omega$ over $S_1 \sqcup S_1^{-1} \sqcup S_2 \sqcup S_2^{-1}$, decides whether $\overline{\omega} = 1$ in $G$.
Let $\omega$ be a word over $S_1 \sqcup S_1^{-1} \sqcup S_2 \sqcup S_2^{-1}$.
This can easily be written as a finite product $\omega = \omega_1 \omega_2 \cdots \omega_n$ where, for each $1 \le i \le n$, there exists $j(i) \in \{1,2\}$ such that $\omega_i \in (S_{j(i)} \sqcup S_{j(i)}^{-1})^*$.
We proceed by induction on $n$.

If $n=0$, then $\overline{\omega} = 1$ and the algorithm stops.
Suppose $n=1$.
Then we apply the solution to the word problem in $G_{j(1)}$ to decide whether $\overline{\omega} = \overline{\omega_1} = 1$.

Assume that $n \ge 2$ and the induction hypothesis holds. 
Suppose there exists $i \in \{1, \dots, n-1\}$ such that $j(i) = j(i+1)$.
Let $\omega_i' = \omega_i \omega_{i+1} \in (S_j \sqcup S_j^{-1})^*$, where $j = j(i) = j(i+1)$.
Then 
\[
\omega = \omega_1 \cdots \omega_{i-1} \omega_i' \omega_{i+2} \cdots \omega_n\,,
\]
hence, by the induction hypothesis, we can decide whether $\overline{\omega} = 1$.

So, we can assume that $j(i) \neq j(i + 1)$ for all $1 \le i \le n-1$.
By hypothesis, there exists an algorithm to decide whether $\overline{\omega_i}$ belongs to $H$.
If $\overline{\omega_i} \not \in H$ for all $i \in \{1, \dots, n\}$, then, by Corollary \ref{corl2_17}, 
\[
\overline{\omega} = \overline{\omega_1} \, \overline{\omega_2} \cdots \overline{\omega_n} \not \in H\,,
\]
hence $\overline{\omega} \neq 1$.

So, we can assume that there exists $i \in \{1, \dots, n\}$ such that $\overline{\omega_i} \in H$.
Without loss of generality, assume $j(i)=1$.
Since the strong membership problem for $H$ in $G_1$ is solvable, there exists an algorithm which determines a word $\mu_i \in (S_H \sqcup S_H^{-1})^*$ such that $\overline{\mu_i} = \overline{\omega_i}$.
Let $\iota_2^* : (S_H \sqcup S_H^{-1})^* \to (S_2 \sqcup S_2^{-1})^*$ be the monoid homomorphism which sends $y^{\pm 1}$ to $\iota_2 (y)^{\pm 1}$ for all $y \in S_H$.
Let $\nu_i =\iota_2^* (\mu_i)$.
Then $\nu_i \in (S_2 \sqcup S_2^{-1})^*$ and $\overline{\omega_i} = \overline{\nu_i}$.
Let
\[
\omega' = \omega_1 \cdots \omega_{i-1} \nu_i \omega_{i+1} \cdots \omega_n\,.
\]
Note that $\overline{\omega} = \overline{\omega'}$, $j(i-1) = 2$ (if $i \ge 2$), and $j(i+1) = 2$ (if $i \le n-1$).
By applying to $\omega'$ the same argument as the one given two paragraphs above, we can with the induction hypothesis decide whether $\overline{\omega} = \overline{\omega'} = 1$. 
\end{proof}


\section{Standard parabolic subgroups of virtual Artin groups}\label{sec3}

The set $\RR[\Gamma] = \{ w s w^{-1} \mid s \in S\,,\ w \in W[\Gamma] \}$ is called the set of \emph{reflections} of $W[\Gamma]$.
To every root $\beta \in \Phi[\Gamma]$, we associate the reflection $r_\beta = w s w^{-1}$, where $w \in W[\Gamma]$ and $s \in S$ are such that $\beta = w (\alpha_s)$.
It is known from \cite{Deo82} that this definition does not depend on the choice of $w$ and $s$.
Furthermore, for any $\beta, \gamma \in \Phi[\Gamma]$, we have $r_\beta = r_\gamma$ if and only if $\gamma = \pm \beta$.
In our proofs, we use the following two lemmas, which are standard results in the theory of Coxeter groups.
Lemma \ref{lem3_1} is proved in \cite[Proposition 2.2]{Deo82}, and Lemma \ref{lem3_2} is proved in \cite[Lemma 1.7]{BH93}.

\begin{lem}[Deodhar \cite{Deo82}]\label{lem3_1}
Let $w \in W [\Gamma]$ and $s \in S$. 
Then
\begin{gather*}
\ell_S (ws) > \ell_S (w) \ \Leftrightarrow\ w(\alpha_s) \in \Phi^+ [\Gamma]\,,\\
\ell_S (ws) < \ell_S (w) \ \Leftrightarrow\ w(\alpha_s) \in \Phi^- [\Gamma]\,.
\end{gather*}
\end{lem}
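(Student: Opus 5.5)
We prove the first equivalence, $\ell_S(ws) > \ell_S(w) \Rightarrow w(\alpha_s) \in \Phi^+[\Gamma]$, by induction on $\ell_S(w)$. Everything else then follows formally. Since $\ell_S(ws) = \ell_S(w) \pm 1$, exactly one of $\ell_S(ws) > \ell_S(w)$ and $\ell_S(ws) < \ell_S(w)$ holds. In the second case we apply the first implication to $ws$: since $\ell_S(wss) > \ell_S(ws)$, we get $ws(\alpha_s) = -w(\alpha_s) \in \Phi^+[\Gamma]$, hence $w(\alpha_s) \in \Phi^-[\Gamma]$. Because $\Phi[\Gamma] = \Phi^+[\Gamma] \sqcup \Phi^-[\Gamma]$, the two implications upgrade to the two equivalences.

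For the induction, the case $w = 1$ is trivial. If $w \neq 1$, choose $t \in S$ with $\ell_S(wt) < \ell_S(w)$, namely the last letter of a reduced expression. Then $t \neq s$. Set $I = \{s,t\}$ and apply Lemma \ref{lem2_6} with $X = \emptyset$, $Y = I$. This gives $w = v u$ with $u \in W[\Gamma_I]$, $v$ being $(\emptyset, I)$-minimal, and $\ell_S(w) = \ell_S(v) + \ell_S(u)$. Since $wt \in wW[\Gamma_I]$ is shorter than $w$, we have $\ell_S(v) \le \ell_S(wt) < \ell_S(w)$. By Corollary \ref{corl2_7}, $\ell_S(vs) > \ell_S(v)$ and $\ell_S(vt) > \ell_S(v)$, so the induction hypothesis yields $v(\alpha_s), v(\alpha_t) \in \Phi^+[\Gamma]$.

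Next we transfer the hypothesis to $u$. If $\ell_S(us) < \ell_S(u)$, then
\[
\ell_S(ws) \le \ell_S(v) + \ell_S(us) < \ell_S(v) + \ell_S(u) = \ell_S(w),
\]
a contradiction. Hence $\ell_S(us) > \ell_S(u)$. By Proposition \ref{prop2_3}, this is the same as $\ell_I(us) > \ell_I(u)$ in the dihedral group $W[\Gamma_I]$. It therefore remains to show the dihedral case of the lemma: if $u \in W[\Gamma_I]$ satisfies $\ell_I(us) > \ell_I(u)$, then $u(\alpha_s) = a\alpha_s + b\alpha_t$ with $a, b \ge 0$. Granting this, we get $w(\alpha_s) = a\, v(\alpha_s) + b\, v(\alpha_t)$, a nonnegative combination of positive roots and not zero, hence positive.

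The main obstacle is this explicit rank-two computation. Here $u$ is an alternating word ending in $t$ of length $k < m = m_{s,t}$, or of any length $k$ if $m = \infty$. I would compute $u(\alpha_s)$ directly from the formula for $\rho$ using the bilinear form on $\mathrm{span}(\alpha_s, \alpha_t)$. With $\theta = \pi/m$, the standard closed forms are
\[
\frac{\sin((k+1)\theta)}{\sin\theta}\,\alpha_s + \frac{\sin(k\theta)}{\sin\theta}\,\alpha_t,
\]
up to swapping the roles of $\alpha_s$ and $\alpha_t$ according to parity. In the case $m = \infty$, the coefficients are $k+1$ and $k$. One checks these by induction on $k$. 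Both coefficients are nonnegative exactly because $k < m$, so that $0 \le k\theta < (k+1)\theta \le \pi$.
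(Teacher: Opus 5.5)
The paper gives no argument for this lemma; it is quoted from \cite[Proposition 2.2]{Deo82}. Your proof is correct, and it is the classical rank-two reduction, essentially the proof in Humphreys' book.

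The individual steps all check out:
\begin{itemize}
\item the parity argument giving $\ell_S(ws)=\ell_S(w)\pm 1$;
\item the choice of $t\neq s$;
\item the factorization $w=vu$, with $v$ the $(\emptyset,\{s,t\})$-minimal element and $\ell_S(v)\le \ell_S(wt)<\ell_S(w)$;
\item the transfer to $\ell_S(us)>\ell_S(u)$;
\item the dihedral computation.
\end{itemize}
In the dihedral step, $k<m$ holds because for $k=m$ the braid relation would give an expression of $u$ ending in $s$. Then $0\le k\theta<(k+1)\theta\le\pi$ makes both coefficients $\sin((k+1)\theta)/\sin\theta$ and $\sin(k\theta)/\sin\theta$ nonnegative.

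Compared with the citation, your route makes this part of the paper self-contained, but it rests on Lemma~\ref{lem2_6}, Corollary~\ref{corl2_7} and Proposition~\ref{prop2_3}. That is legitimate in the paper's logical order, since those results are imported from Bourbaki, Davis and Tits, where they are obtained combinatorially. In treatments such as Humphreys', however, minimal coset representatives and the equality of lengths on parabolic subgroups are \emph{derived} from this very lemma, so there your argument would be circular. Humphreys avoids this by taking $v$ of minimal length among all $v\in wW[\Gamma_I]$ with $\ell_S(v)+\ell_I(v^{-1}w)=\ell_S(w)$, which needs no prior coset theory.

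A small point of logical hygiene: to upgrade your two implications to equivalences you only need $\Phi^+[\Gamma]\cap\Phi^-[\Gamma]=\emptyset$, which is immediate because roots are nonzero. The covering $\Phi[\Gamma]=\Phi^+[\Gamma]\cup\Phi^-[\Gamma]$ is itself a consequence of the lemma, since your implications show that every $w(\alpha_s)$ is positive or negative. It is therefore cleaner not to invoke the decomposition from \cite{Deo82} at that step.
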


The \emph{depth} of a positive root $\beta \in \Phi^+ [\Gamma]$ is defined to be
\[
\dpt (\beta) = \min \{l \in \N \mid \exists w \in W [\Gamma] \text{ such that } w (\beta) \in \Phi^- [\Gamma] \text{ and } \ell_S (w) = l\}\,.
\]

\begin{lem}[Brink--Howlett \cite{BH93}]\label{lem3_2}
Let $s \in S$ and $\beta \in \Phi^+ [\Gamma] \setminus \{ \alpha_s\}$. 
Then 
\[
\dpt (s (\beta)) = \left\{ \begin{array}{ll}
\dpt (\beta) -1 & \text{if } \langle \beta, \alpha_s \rangle >0\,,\\
\dpt (\beta)  & \text{if } \langle \beta, \alpha_s \rangle = 0\,,\\
\dpt (\beta) + 1 & \text{if } \langle \beta, \alpha_s \rangle < 0\,.
\end{array} \right.
\]
\end{lem}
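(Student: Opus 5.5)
The statement is Lemma \ref{lem3_2} (Brink--Howlett). The approach is to compare $\dpt(s(\beta))$ with $\dpt(\beta)$ in both directions, using only Lemma \ref{lem3_1} and the reflection formula $s(\beta) = \beta - \langle \beta, \alpha_s \rangle \alpha_s$.

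First I record that $\dpt(s(\beta))$ makes sense. Since $\beta \neq \alpha_s$, and $\alpha_s$ is the only positive root sent to a negative root by $s$ (a standard consequence of Lemma \ref{lem3_1}), the root $s(\beta)$ is again positive. Moreover $s$ is an involution, so the roles of $\beta$ and $s(\beta)$ are symmetric, with $\langle s(\beta), \alpha_s\rangle = -\langle \beta, \alpha_s\rangle$.

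Next comes the crude bound $|\dpt(s(\beta)) - \dpt(\beta)| \le 1$. If $w(\beta) \in \Phi^-$ with $\ell_S(w) = \dpt(\beta)$, then $(ws)(s(\beta)) = w(\beta) \in \Phi^-$. This gives $\dpt(s(\beta)) \le \dpt(\beta)+1$, and by symmetry the reverse inequality.

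It then suffices to prove the key claim: if $\langle \beta, \alpha_s\rangle \ge 0$, then $\dpt(s(\beta)) \le \dpt(\beta)$, with strict inequality when $\langle \beta, \alpha_s\rangle > 0$. The $<0$ case then follows by applying the claim to $s(\beta)$, and the $=0$ case is trivial since then $s(\beta)=\beta$. To prove the key claim, I would use the characterization $\dpt(\beta) = \min\{\ell_S(w) : w^{-1}(\alpha_t)=\beta \text{ for some } t\in S\} + 1$ (a minimal $w$ sending $\beta$ negative ends with a simple reflection $t$, and $w t^{-1}$ maps $\beta$ to $\alpha_t$). I would establish this characterization via Lemma \ref{lem3_1}. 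Take $v$ of minimal length with $v(\beta)=\alpha_t$.
\begin{itemize}
\item If $\ell_S(vs) < \ell_S(v)$, then $vs$ is shorter and $(vs)(s(\beta)) = \alpha_t$. This gives $\dpt(s(\beta)) \le \dpt(\beta)-1$ outright.
\item If $\ell_S(vs)>\ell_S(v)$, then by Lemma \ref{lem3_1} $v(\alpha_s)\in\Phi^+$. Consider $\langle \alpha_t, v(\alpha_s)\rangle = \langle \beta,\alpha_s\rangle \ge 0$. Then I would argue, via a dihedral (rank two) analysis in $\langle s,t\rangle$-type subgroup built from $v s v^{-1}$ and $t$, that a positive root $v(\alpha_s)\ne\alpha_t$ with nonnegative pairing against $\alpha_t$ forces $\ell_S(tv)<\ell_S(v)$ or contradicts minimality.
\end{itemize}

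The main obstacle is exactly this last step: turning the sign of $\langle\beta,\alpha_s\rangle$ into a length statement. If the above route stalls, I would instead follow Brink--Howlett's original argument, which goes by induction on depth. A root of depth $d>1$ admits some $t$ with $\langle\beta,\alpha_t\rangle>0$ and $\dpt(t(\beta))=d-1$. One then analyses the rank-two root subsystem spanned by $\alpha_s,\alpha_t$, where all depths can be computed explicitly from the coefficients $-2\cos(\pi/m)$. Combining this with the induction hypothesis gives the three cases.
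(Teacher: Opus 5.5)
Your preliminary steps are correct. These are: positivity of $s(\beta)$, the bound $|\dpt(s(\beta))-\dpt(\beta)|\le 1$, the reduction of all three cases to the strict inequality $\dpt(s(\beta))<\dpt(\beta)$ when $\langle\beta,\alpha_s\rangle>0$, the characterization $\dpt(\beta)=1+\min\{\ell_S(v) : v(\beta) \text{ simple}\}$, and your first bullet. One small slip: the letter is split off on the left, $v=tw$, not $wt^{-1}$. Note that the paper itself gives no proof of this lemma; it simply quotes \cite[Lemma 1.7]{BH93}.

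\textbf{The gap is your second bullet, which carries the whole content of the lemma, and the dichotomy you propose there is false.} The alternative $\ell_S(tv)<\ell_S(v)$ never occurs: $v^{-1}(\alpha_t)=\beta\in\Phi^+[\Gamma]$, so $\ell_S(tv)>\ell_S(v)$ by Lemma \ref{lem3_1}. You are therefore claiming that every minimal $v$ has $s$ as a right descent, and this already fails in type $A_2$. Take $S=\{s,u\}$, $m_{s,u}=3$, $\beta=\alpha_s+\alpha_u$. Then:
\begin{itemize}
\item $\langle\beta,\alpha_s\rangle=1>0$;
\item $v=u$ is of minimal length with $v(\beta)=\alpha_s$;
\item $\ell_S(us)>\ell_S(u)$;
\item $v(\alpha_s)=\alpha_s+\alpha_u\neq\alpha_s$ is positive with $\langle\alpha_s,v(\alpha_s)\rangle=1>0$.
\end{itemize}
Nothing is contradicted. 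The conclusion $\dpt(s(\beta))=1$ is witnessed only by the other minimal element, $s$. Showing that \emph{some} minimal $v$ has $s$ as a right descent is equivalent to the lemma, so no argument about a fixed minimal $v$ can settle this case.

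What your setup does give is only the weak inequality. Let $w$ have length $\dpt(\beta)$ with $w(\beta)\in\Phi^-[\Gamma]$. Either $\ell_S(ws)<\ell_S(w)$, or $w(\alpha_s)\in\Phi^+[\Gamma]$ and then $w(s(\beta))=w(\beta)-\langle\beta,\alpha_s\rangle\, w(\alpha_s)$ is negative. Hence $\dpt(s(\beta))\le\dpt(\beta)$ whenever $\langle\beta,\alpha_s\rangle\ge 0$.

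Your fallback does not supply the missing step, and it is aimed at the wrong object. In general $\beta$ does not lie in the rank-two subsystem spanned by $\alpha_s,\alpha_t$, so there are no depths there to ``compute explicitly''. What must be controlled is the $W[\Gamma_{\{s,t\}}]$-orbit of $\beta$, whose depths are not known in advance. Also, the claim that a depth-decreasing $t$ satisfies $\langle\beta,\alpha_t\rangle>0$ needs the weak inequality above.

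One way to finish is as follows.
\begin{itemize}
\item Induct on $d=\dpt(\beta)$ and pick $t$ with $\dpt(t(\beta))=d-1$. If $t=s$ you are done; otherwise the weak inequality forces $\langle\beta,\alpha_t\rangle>0$.
\item Since both pairings are positive, for $w\in W[\Gamma_{\{s,t\}}]$ and $x\in\{s,t\}$ one has $\langle w(\beta),\alpha_x\rangle=\langle\beta,w^{-1}(\alpha_x)\rangle>0$ if and only if $\ell_S(xw)>\ell_S(w)$.
\item If $\beta\notin V_{\{s,t\}}$, the orbit consists of positive roots. Along $t, st, tst,\dots$ the induction hypothesis makes the depth drop by exactly one per step. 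It reaches $d-m_{s,t}$ at the longest element, which also rules out $m_{s,t}=\infty$.
\item Along $s, ts, sts,\dots$ the depth drops by at most one per step. So $\dpt(s(\beta))=d$ would force depth at least $d-m_{s,t}+1$ at that same longest element, a contradiction.
\item If $\beta\in V_{\{s,t\}}$, a short direct dihedral check replaces this.
\end{itemize}
This orbit-counting mechanism, or something equivalent, is what your proposal is missing.
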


\begin{rem}\label{rem3_3}
Let $\beta \in \Phi^+ [\Gamma]$.
If $\dpt (\beta) = 1$, then there exists $s \in S$ such that $\beta = \alpha_s$.
If $\dpt (\beta) \ge 2$, then there exists $s \in S$ such that $\dpt (s (\beta)) < \dpt (\beta)$.
Indeed, if we write $\beta = w (\alpha_t)$ with $t \in S$, $w \in W [\Gamma]$, and $w$ of minimal length, then $\dpt (\beta) = \ell_S(w) + 1$.
So, if $s \in S$ is such that $\ell_S (sw) < \ell_S (w)$, then $\dpt (s (\beta)) < \dpt (\beta)$.
\end{rem}

\begin{lem}\label{lem3_4}
Let $X, Y \subseteq S$ and $w \in W [\Gamma]$.
Let $w_0$ be the unique $(X,Y)$-minimal element in the double coset $W[\Gamma_X] w W [\Gamma_Y]$.
Suppose that $w W [\Gamma_Y] w^{-1} \subseteq W [\Gamma_X]$.
Then $w_0 Y w_0^{-1} \subseteq X$ and there exists $u \in W [\Gamma_X]$ such that $w = u w_0$.
\end{lem}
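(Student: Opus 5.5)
Write $w = u w_0 v$ with $u \in W[\Gamma_X]$, $v \in W[\Gamma_Y]$ and $\ell_S(w) = \ell_S(u) + \ell_S(w_0) + \ell_S(v)$, as given by Lemma \ref{lem2_6}. Conjugating the hypothesis by $u^{-1}$ and using $v W[\Gamma_Y] v^{-1} = W[\Gamma_Y]$ gives
\[
w_0 W[\Gamma_Y] w_0^{-1} = u^{-1} w W[\Gamma_Y] w^{-1} u \subseteq W[\Gamma_X]\,.
\]
So the hypothesis passes from $w$ to $w_0$. The plan is to prove two things: first that $w_0 y w_0^{-1} \in X$ for every $y \in Y$, and then that $w_0 v = v' w_0$ for some $v' \in W[\Gamma_X]$. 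The second gives $w = (u v') w_0$, which is the required form.

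For the first claim, fix $y \in Y$ and set $r = w_0 y w_0^{-1}$. This is a reflection, namely $r = r_\beta$ with $\beta = w_0(\alpha_y)$. Since $w_0$ is $(\emptyset, Y)$-minimal, Lemma \ref{lem3_1} and Corollary \ref{corl2_7} give $\ell_S(w_0 y) > \ell_S(w_0)$, so $\beta \in \Phi^+[\Gamma]$.

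We know $r \in W[\Gamma_X]$, and we want to show $\beta = \alpha_x$ for some $x \in X$. Suppose instead that $\dpt(\beta) \ge 2$. Write $\beta = w'(\alpha_t)$ with $w'$ of minimal length; then $\dpt(\beta) = \ell_S(w') + 1$ by Remark \ref{rem3_3}, and $\ell_S(w') \le \ell_S(w_0)$.

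The cleanest route is length parity. Since $w_0$ is $(X,\emptyset)$-minimal, Corollary \ref{corl2_7}(2) gives
\[
\ell_S(r w_0) = \ell_S(r) + \ell_S(w_0)\,.
\]
On the other hand $r w_0 = w_0 y$, which has length $\ell_S(w_0) + 1$. Hence $\ell_S(r) = 1$, so $r$ is a simple reflection. By Proposition \ref{prop2_2} (or Proposition \ref{prop2_4}), $r$ lies in $X$. This avoids the depth lemma entirely, and I expect this to be the key observation. It also shows that conjugation by $w_0$ maps $Y$ injectively into $X$.

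For the second claim, note that $w_0 v w_0^{-1} = v'$ is a product of the elements $w_0 y w_0^{-1} \in X$, so $v' \in W[\Gamma_X]$. Then $w = u w_0 v = u v' w_0$, and we take $u v'$ as the required element of $W[\Gamma_X]$. The main obstacle is only to make sure that Corollary \ref{corl2_7} applies: $w_0$ is simultaneously $(X,\emptyset)$- and $(\emptyset,Y)$-minimal, which is immediate because it has minimal length in the whole double coset and therefore also in each one-sided coset contained in it.
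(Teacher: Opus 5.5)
Your proof is correct and follows the paper's argument. You transfer the hypothesis to $w_0$ by conjugating with the $W[\Gamma_X]$-factor, get $\ell_S(w_0 y w_0^{-1}) = 1$ by comparing $\ell_S(w_0 y) = \ell_S(w_0)+1$ (from $(\emptyset,Y)$-minimality) with $\ell_S(r w_0) = \ell_S(r)+\ell_S(w_0)$ (from $(X,\emptyset)$-minimality), and then move the $W[\Gamma_Y]$-factor across $w_0$; the preliminary discussion of the root $\beta = w_0(\alpha_y)$ and its depth is never used and can be deleted.
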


\begin{proof}
Let $a \in W [\Gamma_X]$ and $b \in W [\Gamma_Y]$ be such that $w = a w_0 b$.
We have
\[
(a w_0) W [\Gamma_Y] (a w_0)^{-1} = (a w_0 b) W [\Gamma_Y] (a w_0 b)^{-1} = w W [\Gamma_Y] w^{-1} \subseteq W [\Gamma_X]\,,
\]
hence
\[
w_0 W [\Gamma_Y] w_0^{-1} \subseteq a^{-1} W [\Gamma_X] a = W [\Gamma_X]\,.
\]
Let $y \in Y$.
Let $x \in W [\Gamma_X]$ be such that $w_0 y w_0^{-1} = x$. 
So, $w_0 y = x w_0$.
Note that, since $w_0$ is $(X,Y)$-minimal, $w_0$ is both $(X, \emptyset)$-minimal and $(\emptyset, Y)$-minimal.
By Corollary \ref{corl2_7}, it follows that
\[
1 + \ell_S (w_0) = \ell_S (w_0 y) = \ell_S (x w_0) = \ell_S (x) + \ell_S (w_0)\,,
\]
hence, $\ell_S (x) = 1$, that is, $x = w_0 y w_0^{-1} \in X$.
This proves that $w_0 Y w_0^{-1} \subseteq X$.

Let $b' = w_0 b w_0^{-1}$ and $u = ab'$.
Note that $b' \in W [\Gamma_X]$, because $w_0 W [\Gamma_Y] w_0^{-1} \subseteq W [\Gamma_X]$, hence $u = a b' \in W[\Gamma_X]$.
Now, $w = a w_0 b = ab' w_0 = u w_0$.
\end{proof}

Let $X \subseteq S$.
We define the subspace
\[
V_X = \bigoplus_{x \in X} \R \alpha_x \subseteq V = \bigoplus_{s \in S} \R \alpha_s\,,
\]
and we consider the root system $\Phi [\Gamma_X]$ as a subset of $V_X \subseteq V$.
Although the following lemma is proved in \cite{Qi07} using a similar argument, we provide a proof here in order to make the presentation more accessible and self-contained.

\begin{lem}\label{lem3_5}
Let $X \subseteq S$. 
Then $\Phi [\Gamma_X] = V_X \cap \Phi [\Gamma]$.
\end{lem}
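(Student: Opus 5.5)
The inclusion $\Phi [\Gamma_X] \subseteq V_X \cap \Phi [\Gamma]$ is the easy direction. By Proposition \ref{prop2_3} we identify $W [\Gamma_X]$ with $W_X [\Gamma]$. The canonical representation of $W [\Gamma_X]$ on $V_X$ is the restriction of the action of $W_X [\Gamma]$ on $V$ to the invariant subspace $V_X$, because for $x,y \in X$ the form $\langle \alpha_x, \alpha_y \rangle$ is the same in both settings. Hence a root $w (\alpha_x)$ of $\Gamma_X$, with $w \in W_X[\Gamma]$ and $x \in X$, is also a root of $\Gamma$, and it lies in $V_X$.

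For the reverse inclusion, let $\beta \in V_X \cap \Phi [\Gamma]$. Since $\Phi[\Gamma] = \Phi^+[\Gamma] \sqcup \Phi^-[\Gamma]$ and both $\Phi [\Gamma_X]$ and $V_X$ are stable under $\beta \mapsto -\beta$, we may assume $\beta \in \Phi^+ [\Gamma]$. We argue by induction on $\dpt (\beta)$. If $\dpt (\beta) = 1$, then by Remark \ref{rem3_3} we have $\beta = \alpha_s$ for some $s \in S$. Since $\beta \in V_X$, this forces $s \in X$, so $\beta \in \Phi [\Gamma_X]$.

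Now suppose $\dpt (\beta) \ge 2$. By Remark \ref{rem3_3} there is some $s \in S$ with $\dpt (s(\beta)) < \dpt(\beta)$, and by Lemma \ref{lem3_2} this means $\langle \beta, \alpha_s \rangle > 0$. We then want to show that $s$ can be taken in $X$. Write $\beta = \sum_{x \in X} \lambda_x \alpha_x$ with all $\lambda_x \ge 0$. For $s \notin X$ every $\langle \alpha_x, \alpha_s \rangle$ with $x \in X$ is $\le 0$, so $\langle \beta, \alpha_s \rangle \le 0$. Hence the $s$ given by Remark \ref{rem3_3} automatically lies in $X$.

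With $s \in X$, the root $s(\beta) = \beta - \langle \beta, \alpha_s\rangle \alpha_s$ again lies in $V_X$. It is positive, since $\beta \neq \alpha_s$ and a simple reflection permutes $\Phi^+[\Gamma] \setminus \{\alpha_s\}$ (Lemma \ref{lem3_2} implicitly uses this). Its depth is strictly smaller than $\dpt(\beta)$. By the induction hypothesis $s(\beta) \in \Phi [\Gamma_X]$, and therefore $\beta = s(s(\beta)) \in \Phi [\Gamma_X]$ because $s \in X$.

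The only delicate point is the observation that a depth-decreasing simple reflection must come from $X$. This rests on the sign pattern of the bilinear form: off-diagonal values are nonpositive, combined with the nonnegativity of the coefficients of the positive root $\beta$.
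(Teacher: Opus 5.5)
Your proof is correct and takes essentially the same route as the paper: reduce to positive roots, induct on depth, use Remark \ref{rem3_3} and Lemma \ref{lem3_2} to get $s$ with $\langle \beta, \alpha_s \rangle > 0$, and use the nonpositivity of $\langle \alpha_x, \alpha_s \rangle$ for $s \notin X$ to force $s \in X$. You also make explicit two points the paper leaves implicit: that the canonical representation of $W[\Gamma_X]$ is the restriction to $V_X$, and that $s(\beta)$ stays positive so the induction hypothesis applies.
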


\begin{proof}
The inclusion $\Phi [\Gamma_X] \subseteq V_X \cap \Phi [\Gamma]$ is obvious, hence we only need to prove the reverse inclusion.
Let $\beta \in V_X \cap \Phi [\Gamma]$.
Up to replacing $\beta$ by $- \beta$ if necessary, we can assume that $\beta \in \Phi^+ [\Gamma]$.
We prove that $\beta \in \Phi [\Gamma_X]$ by induction on $\dpt (\beta)$.
If $\dpt(\beta) = 1$, then there exists $s \in S$ such that $\beta = \alpha_s$.
Since $\beta \in V_X$, we must have $s \in X$, which implies $\beta \in \Phi[\Gamma_X]$.
Now, suppose that $\dpt(\beta) \ge 2$, and that the induction hypothesis holds. 
Let $\beta = \sum_{x \in X} \lambda_x \alpha_x$ be the representation of $\beta$ in the basis $\Pi_X = \{ \alpha_x \mid x \in X \}$ of $V_X$.
According to Remark \ref{rem3_3}, there exists $s \in S$ such that $\dpt(s(\beta)) < \dpt(\beta)$.
By Lemma \ref{lem3_2}, this implies $\langle \beta, \alpha_s \rangle >0$.
Suppose  that $s \not \in X$.
Since $\lambda_x \ge 0$ and $\langle \alpha_x, \alpha_s \rangle \le 0$ for all $x \in X$, we would have $\langle \beta, \alpha_s \rangle \le 0$, which is a contradiction.
So, $s \in X$.
It follows that $\beta' = s (\beta) \in V_X \cap \Phi [\Gamma]$.
By the induction hypothesis, $\beta' \in \Phi [\Gamma_X]$, hence $\beta = s (\beta') \in \Phi [\Gamma_X]$.
\end{proof}

In our study, the elements of $\Phi [\Gamma]$ are viewed as linear combinations of the elements of $\Pi = \{ \alpha_s \mid s \in S\}$.
This makes sense from an algorithmic point of view, as we can take the coefficients in a number field $\K$ containing $\{\cos (\pi/m_{s,t}) \mid s,t \in S\,,\ s \neq t \text{ and } m_{s,t} \neq \infty\}$.
If $\beta = w (\alpha_s)$ with $w \in W [\Gamma]$ and $s \in S$, then the coordinates of $\beta$ are easily determined by applying the canonical linear representation.
The reverse operation which, given $\beta \in \Phi [\Gamma]$, consists of finding $s \in S$ and $w \in W [\Gamma]$ such that $\beta = w (\alpha_s)$, is less obvious (see Lemma \ref{lem4_2} for a solution).

\begin{corl}\label{corl3_6}
Let $X \subseteq S$.
There exists an algorithm that, given $\beta \in \Phi [\Gamma]$, decides whether $\beta \in \Phi [\Gamma_X]$ or not.
\end{corl}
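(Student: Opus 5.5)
The plan is to combine Lemma \ref{lem3_5} with the fact that the root system is a subset of a finite-dimensional vector space equipped with explicit coordinates. By Lemma \ref{lem3_5}, $\Phi [\Gamma_X] = V_X \cap \Phi [\Gamma]$. So, for a given $\beta \in \Phi [\Gamma]$, deciding whether $\beta \in \Phi [\Gamma_X]$ amounts to deciding whether $\beta \in V_X$. Here we use the explicit representation of $\beta$ as a linear combination $\beta = \sum_{s \in S} \lambda_s \alpha_s$, with coefficients $\lambda_s$ in the number field $\K$, as explained just before the statement.

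The algorithm then proceeds as follows. Given $\beta$ through its coordinates $(\lambda_s)_{s \in S}$ in the basis $\Pi$, check for each $s \in S \setminus X$ whether $\lambda_s = 0$. Since $\Pi$ is a basis of $V$ and $\Pi_X = \{ \alpha_x \mid x \in X\}$ spans $V_X$, we have $\beta \in V_X$ if and only if $\lambda_s = 0$ for all $s \in S \setminus X$. By Lemma \ref{lem3_5}, this happens if and only if $\beta \in \Phi [\Gamma_X]$.

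The only point needing care is that equality with zero must be decidable in $\K$. This holds because $\K$ can be taken to be a finite extension of $\mathbb{Q}$, for instance the field generated by the finitely many numbers $\cos(\pi/m_{s,t})$. Such a field admits exact arithmetic, for example by representing elements as polynomials modulo a minimal polynomial. If $\beta$ is instead given as $w(\alpha_s)$ with $w$ a word over $S$, its coordinates are first computed exactly by applying the canonical representation $\rho$, whose matrices have entries in $\K$. I do not expect any genuine obstacle: the substance is entirely contained in Lemma \ref{lem3_5}, and the remaining work is routine exact linear algebra over a number field.
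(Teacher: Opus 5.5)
Your proposal is correct and follows essentially the paper's route: the paper's proof also reads off the coordinates $\lambda_s$ of $\beta$ in the basis $\Pi$ and checks that $\lambda_s = 0$ for all $s \in S \setminus X$, relying implicitly on Lemma \ref{lem3_5}. Your remarks on exact zero-testing in the number field $\K$ just spell out what the paper leaves implicit.
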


\begin{proof}
Let $\beta = \sum_{s \in S} \lambda_s \alpha_s$ be a root in $\Phi [\Gamma]$.
Then $\beta \in \Phi [\Gamma_X]$ if and only if $\lambda_s = 0$ for all $s \in S \setminus X$.
\end{proof}

\begin{lem}\label{lem3_7}
Let $X \subseteq S$.
Let $\beta, \gamma \in \Phi [\Gamma_X]$.
Suppose there exist $s,t \in S$ and $w \in W [\Gamma]$ such that $\beta = w(\alpha_s)$ and $\gamma = w(\alpha_t)$.
Then there exist $x,y \in X$ and $u \in W [\Gamma_X]$ such that $\beta = u(\alpha_x)$ and $\gamma = u(\alpha_y)$.
\end{lem}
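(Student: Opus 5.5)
The idea is to reduce $\beta$ to a simple root of $\Gamma_X$ by reflections in $X$, and then apply Lemma \ref{lem3_4} with $Y=\{s,t\}$.

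First, by Lemma \ref{lem3_5} and an induction on depth as in its proof, every root of $\Phi[\Gamma_X]$ is of the form $u(\alpha_x)$ with $u \in W[\Gamma_X]$ and $x \in X$. So we may pick $u_1 \in W[\Gamma_X]$ and $x \in X$ with $\beta = u_1(\alpha_x)$. Replacing $w$ by $u_1^{-1} w$, $\beta$ by $\alpha_x$ and $\gamma$ by $u_1^{-1}(\gamma)$ preserves all hypotheses, since $u_1^{-1}(\gamma) \in \Phi[\Gamma_X]$. So we can assume $\beta = \alpha_x$.

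Set $Y=\{s,t\}$. The key step is to show that $w W[\Gamma_Y] w^{-1} \subseteq W[\Gamma_X]$. The group $w W[\Gamma_Y] w^{-1}$ is generated by $wsw^{-1} = r_\beta$ and $wtw^{-1} = r_\gamma$. For any root $\delta = v(\alpha_z) \in \Phi[\Gamma_X]$, written with $v \in W[\Gamma_X]$ and $z \in X$ as in the first step, we have $r_\delta = v z v^{-1} \in W[\Gamma_X]$. Hence $r_\beta, r_\gamma \in W[\Gamma_X]$, and the inclusion holds.

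Lemma \ref{lem3_4} then gives the $(X,Y)$-minimal element $w_0$ with $w_0 Y w_0^{-1} \subseteq X$ and $w = u w_0$ for some $u \in W[\Gamma_X]$. Put $x' = w_0 s w_0^{-1} \in X$ and $y' = w_0 t w_0^{-1} \in X$. It remains to check that $w_0(\alpha_s) = \alpha_{x'}$ and $w_0(\alpha_t) = \alpha_{y'}$.

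Since $w_0 s w_0^{-1} = x'$, the reflections of $w_0(\alpha_s)$ and $\alpha_{x'}$ coincide, so $w_0(\alpha_s) = \pm \alpha_{x'}$. Because $w_0$ is $(\emptyset,Y)$-minimal, $\ell_S(w_0 s) > \ell_S(w_0)$, and Lemma \ref{lem3_1} shows that $w_0(\alpha_s)$ is positive. Therefore $w_0(\alpha_s) = \alpha_{x'}$, and likewise $w_0(\alpha_t) = \alpha_{y'}$.

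Consequently $\beta = u(\alpha_{x'})$ and $\gamma = u(\alpha_{y'})$. Undoing the first reduction replaces $u$ by $u_1 u$, which still lies in $W[\Gamma_X]$.

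The main obstacle is the first step, namely that every root of $\Phi[\Gamma_X]$ is a $W[\Gamma_X]$-translate of a simple root $\alpha_x$ with $x \in X$. Lemma \ref{lem3_5} identifies $\Phi[\Gamma_X]$ with $V_X \cap \Phi[\Gamma]$, but one has to be careful that $\Phi[\Gamma_X]$ is, by definition, the set of $W[\Gamma_X]$-translates of $\{\alpha_x\}_{x \in X}$ inside $V_X$. Once this is granted, the reflection argument and Lemma \ref{lem3_4} do the rest. With that observation the reduction $\beta = \alpha_x$ is not even needed: the inclusion $r_\beta, r_\gamma \in W[\Gamma_X]$ holds directly for any $\beta, \gamma \in \Phi[\Gamma_X]$.
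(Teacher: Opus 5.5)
Your proof is correct and is essentially the paper's argument. Like the paper, you show $wW[\Gamma_{\{s,t\}}]w^{-1}\subseteq W[\Gamma_X]$ via $r_\beta,r_\gamma\in W[\Gamma_X]$, apply Lemma \ref{lem3_4}, and then fix the signs $w_0(\alpha_s)=\alpha_{x'}$, $w_0(\alpha_t)=\alpha_{y'}$ using $(\emptyset,\{s,t\})$-minimality and Lemma \ref{lem3_1}.

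The preliminary reduction to $\beta=\alpha_x$ is unnecessary, as you note yourself. The ``obstacle'' you flag is also not a real one: $\Phi[\Gamma_X]$ is by definition the $W[\Gamma_X]$-orbit of $\{\alpha_x\}_{x\in X}$, because the canonical representation of $W[\Gamma_X]$ on $V_X$ is the restriction of that of $W[\Gamma]$.
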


\begin{proof}
Let $Y = \{s,t\}$.
We have $wsw^{-1} = r_\beta \in W [\Gamma_X]$ and $wtw^{-1} = r_\gamma \in W[\Gamma_X]$, hence $w W[\Gamma_Y] w^{-1} \subseteq W [\Gamma_X]$.
Let $w_0$ denote the unique $(X,Y)$-minimal element in the double coset $W [\Gamma_X] w W[\Gamma_Y]$.
By Lemma \ref{lem3_4}, we have $w_0 Y w_0^{-1} \subseteq X$, and there exists $u \in W[\Gamma_X]$ such that $w = u w_0$.
Let $x,y \in X$ be such that $w_0 s w_0^{-1} = x$ and $w_0 t w_0^{-1} = y$.
As noted at the beginning of this section, a given reflection is defined by exactly two roots opposite to each other, hence the equality $w_0 s w_0^{-1} = x$ implies that $w_0 (\alpha_s) = \alpha_x$ or $w_0 (\alpha_s) = - \alpha_x$.
Since $w_0$ is $(\emptyset, Y)$-minimal, by Corollary \ref{corl2_7}, $\ell_S (w_0 s) > \ell_S (w_0)$, hence, by Lemma \ref{lem3_1}, $w_0 (\alpha_s) \in \Phi^+ [\Gamma]$, and therefore $w_0 (\alpha_s) = \alpha_x$.
Similarly, $w_0 (\alpha_t) = \alpha_y$.
Finally,
\[
\beta = w(\alpha_s) = u w_0 (\alpha_s) = u (\alpha_x) \quad \text{and} \quad \gamma = w (\alpha_t) = u w_0 (\alpha_t) = u (\alpha_y)\,.
\proved
\]
\end{proof}

We now establish some notation for what follows.
Recall that $\hat \Gamma$ denotes the Coxeter graph from Theorem \ref{thm2_15}, and that $\hat M = (\hat m_{\beta, \gamma})_{\beta, \gamma \in \Phi [\Gamma]}$ denotes its Coxeter matrix.
Recall also that we identify $A [\hat \Gamma]$ with $\KVA [\Gamma]$.
Let $X \subseteq S$.
We denote by $\hat \Gamma_{\Phi[\Gamma_X]}$ the full Coxeter subgraph of $\hat \Gamma$ spanned by $\Phi [\Gamma_X]$.
By Proposition \ref{prop2_11}, $A [\hat \Gamma_{\Phi [\Gamma_X]}]$ is a subgroup of $A [\hat \Gamma]$.
On the other hand, let $\widehat{\Gamma_X}$ be the Coxeter graph obtained by applying Theorem \ref{thm2_15} to $\VA[\Gamma_X]$.
Again, we identify $A[\widehat{\Gamma_X}]$ with $\KVA[\Gamma_X]$.
The two Coxeter graphs, $\hat \Gamma_{\Phi[\Gamma_X]}$ and $\widehat{\Gamma_X}$, share the same vertex set, $\Phi[\Gamma_X]$, but they are not isomorphic {\it a priori}.
We now show that they are, in fact, isomorphic.

\begin{lem}\label{lem3_8}
Let $X \subseteq S$.
The identity map on $\Phi [\Gamma_X]$ induces an isomorphism of Coxeter graphs between $\widehat{\Gamma_X}$ and $\hat \Gamma_{\Phi [\Gamma_X]}$.
\end{lem}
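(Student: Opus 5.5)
We need to show that for $\beta \neq \gamma$ in $\Phi[\Gamma_X]$, the Coxeter matrix entries of $\widehat{\Gamma_X}$ and of $\hat \Gamma$ coincide, i.e.\ $\hat m^X_{\beta,\gamma} = \hat m_{\beta,\gamma}$. Here $\hat m^X$ denotes the Coxeter matrix of $\widehat{\Gamma_X}$, defined by the same recipe (a)--(c) but with $W[\Gamma_X]$ and $X$ in place of $W[\Gamma]$ and $S$. The vertex sets agree, since the root system of $\Gamma_X$, computed inside $V_X$ with the restricted bilinear form, is exactly $\Phi[\Gamma_X] \subseteq V$. So it suffices to compare the matrix entries, and we split according to whether case (b) of the definition applies.

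\emph{Finite entries of $\widehat{\Gamma_X}$.} Suppose first that $\hat m^X_{\beta,\gamma} \neq \infty$. Then there exist $x,y \in X$ and $u \in W[\Gamma_X]$ with $u(\alpha_x) = \beta$, $u(\alpha_y) = \gamma$, $m_{x,y} \neq \infty$, and $\hat m^X_{\beta,\gamma} = m_{x,y}$. Since $u \in W[\Gamma]$ and $x,y \in S$, the same triple witnesses case (b) for $\hat M$. Hence $\hat m_{\beta,\gamma} = m_{x,y}$, using the independence of choices noted after the definition of $\hat M$.

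\emph{Finite entries of $\hat \Gamma$.} Conversely, suppose $\hat m_{\beta,\gamma} \neq \infty$. Then there exist $s,t \in S$ and $w \in W[\Gamma]$ with $w(\alpha_s) = \beta$, $w(\alpha_t) = \gamma$, $m_{s,t} \neq \infty$, and $\hat m_{\beta,\gamma} = m_{s,t}$. By Lemma~\ref{lem3_7}, there are $x,y \in X$ and $u \in W[\Gamma_X]$ with $u(\alpha_x) = \beta$ and $u(\alpha_y) = \gamma$. What remains is to show $m_{x,y} = m_{s,t}$, so that this triple witnesses case (b) for $\widehat{\Gamma_X}$ with the same value.

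\emph{Main obstacle.} The cleanest route is to look inside the proof of Lemma~\ref{lem3_7} rather than only at its statement. That proof gives $w = u w_0$ with $w_0 s w_0^{-1} = x$ and $w_0 t w_0^{-1} = y$. Therefore $x$ and $y$ are simultaneously conjugate to $s$ and $t$ by the single element $w_0$. Consequently $xy = w_0 (st) w_0^{-1}$ has the same order as $st$, which is $m_{s,t}$. This gives $m_{x,y} = m_{s,t}$.

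Alternatively, one can use the independence statement directly. Both $(w,s,t)$ and $(u,x,y)$ are triples of the required form for $\hat M$. Since $m_{s,t} \neq \infty$, the value $m_{x,y}$ must also be finite, because $\langle \alpha_x, \alpha_y \rangle = \langle \beta, \gamma \rangle = \langle \alpha_s, \alpha_t \rangle > -2$. Then independence forces $m_{x,y} = m_{s,t}$.

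Either way, the entries of the two matrices equal to $\infty$ also coincide, since each is the complement of the finite case in both matrices. Hence the identity map on $\Phi[\Gamma_X]$ is an isomorphism of Coxeter graphs between $\widehat{\Gamma_X}$ and $\hat \Gamma_{\Phi[\Gamma_X]}$.
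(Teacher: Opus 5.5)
Your proof is correct and follows the paper's argument. Both compare the finite entries in each direction, use Lemma~\ref{lem3_7} to get a witness in $W[\Gamma_X]$, and conclude $m_{x,y}=m_{s,t}$ because $st$ and $xy$ are conjugate. The paper obtains that conjugacy from the statement of Lemma~\ref{lem3_7} alone, via $r_\beta r_\gamma = w(st)w^{-1} = u(xy)u^{-1}$, so you do not need to reopen its proof: your $w_0$ is just $u^{-1}w$. Your bilinear-form alternative is also valid.
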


\begin{proof}
Let $\hat M^X = (\hat m_{\beta, \gamma}^X)_{\beta, \gamma \in \Phi [\Gamma_X]}$ be the Coxeter matrix of $\widehat{\Gamma_X}$.
We must show that $\hat m_{\beta, \gamma} = \hat m^X_{\beta, \gamma}$ for all $\beta, \gamma \in \Phi [\Gamma_X]$.
To this end, it suffices to prove the following two claims:
\begin{itemize}
\item[(1)]
Let $\beta, \gamma \in \Phi [\Gamma_X]$ with $\beta \neq \gamma$.
If $\hat m^X_{\beta, \gamma} \neq \infty$, then $\hat m_{\beta, \gamma} \neq \infty$ and $\hat m_{\beta, \gamma} = \hat m^X_{\beta, \gamma}$.
\item[(2)]
Let $\beta, \gamma \in \Phi [\Gamma_X]$ with $\beta \neq \gamma$.
If $\hat m_{\beta, \gamma} \neq \infty$, then $\hat m_{\beta, \gamma}^X \neq \infty$ and $\hat m_{\beta, \gamma}^X = \hat m_{\beta, \gamma}$.
\end{itemize}
Let $\beta, \gamma \in \Phi [\Gamma_X]$ with $\beta \neq \gamma$.
Suppose first that $\hat m^X_{\beta, \gamma} \neq \infty$.
There exist $x,y \in X$ and $u \in W [\Gamma_X]$ such that $\beta = u (\alpha_x)$, $\gamma = u (\alpha_y)$, $m_{x,y} \neq \infty$, and $\hat m_{\beta, \gamma}^X = m_{x,y}$.
Then, by definition, $\hat m_{\beta, \gamma} \neq \infty$ and $\hat m_{\beta, \gamma} = m_{x,y} = \hat m_{\beta, \gamma}^X$.
Suppose now that $\hat m_{\beta, \gamma} \neq \infty$.
There exist $s, t \in S$ and $w \in W [\Gamma]$ such that $\beta = w (\alpha_s)$, $\gamma = w (\alpha_t)$, $m_{s,t} \neq \infty$, and $\hat m_{\beta, \gamma} = m_{s,t}$.
By Lemma \ref{lem3_7}, there exist $x,y \in X$ and $u \in W [\Gamma_X]$ such that $\beta = u(\alpha_x)$ and $\gamma = u (\alpha_y)$.
Note that $m_{x,y}$ and $m_{s,t}$ are both equal to the order of $r_\beta r_\gamma = w(st) w^{-1} = u(xy) u^{-1}$ in $W[\Gamma]$, hence $m_{x,y} \neq \infty$ and $m_{x,y} = m_{s,t}$.
Finally, by definition, $\hat m_{\beta, \gamma}^X \neq \infty$ and $\hat m_{\beta, \gamma}^X = m_{x,y} = \hat m_{\beta, \gamma}$.
\end{proof}

In the proofs of Theorems \ref{thm1_1} and \ref{thm1_2} below, we use classical results of Van der Lek \cite{vLek83} (see Propositions \ref{prop2_3} and \ref{prop2_4}). 
Recall that, although these results are proved for finitely generated Artin groups, their extension to infinitely generated Artin groups is immediate via inductive limits of finitely generated standard parabolic subgroups (see \cite{BPT23}).

\begin{proof}[Proof of Theorem \ref{thm1_1}]
Let $X \subseteq S$.
Let $\varphi : \VA [\Gamma_X] \to \VA [\Gamma]$ be the natural homomorphism that maps $\sigma_x$ to $\sigma_x$ and $\tau_x$ to $\tau_x$ for all $x \in X$.
We have the following commutative diagram:
\[
\xymatrix{
1 \ar[r] & \KVA [\Gamma_X] \ar[r] \ar[d]_{\psi} & \VA [\Gamma_X] \ar[r] \ar[d]_{\varphi} & W [\Gamma_X] \ar[r] \ar[d]_{\iota_X} & 1 \\
1 \ar[r] & \KVA [\Gamma] \ar[r] & \VA [\Gamma] \ar[r] & W [\Gamma] \ar[r] & 1
}
\]
where the two rows are exact sequences, $\psi : \KVA [\Gamma_X] \to \KVA [\Gamma]$ is the restriction of $\varphi$ to $\KVA [\Gamma_X]$, and $\iota_X : W [\Gamma_X] \to W [\Gamma]$ is the natural homomorphism that maps $x$ to $x$ for all $x \in X$.
We know from Proposition \ref{prop2_3} that $\iota_X$ is injective.
By Theorem \ref{thm2_15}, we can assume that $\KVA[\Gamma_X] = A[\widehat{\Gamma_X}]$ and $\KVA[\Gamma] = A[\hat \Gamma]$.
Furthermore, by Lemma \ref{lem3_8}, we can assume that $A[\widehat{\Gamma_X}] = A[\hat \Gamma_{\Phi[\Gamma_X]}]$.
Once these identifications are made, $\psi$ becomes the natural homomorphism $\psi : A [\hat \Gamma_{\Phi [\Gamma_X]}] \to A [\hat \Gamma]$ that  maps $\delta_\beta$ to $\delta_\beta$ for all $\beta \in \Phi [\Gamma_X]$.
We know from Proposition \ref{prop2_11} that this homomorphism is injective.
Finally, by the five lemma, we conclude that $\varphi : \VA [\Gamma_X] \to \VA [\Gamma]$ is injective.
\end{proof}

From now on, for $X \subseteq S$, we identify $\VA [\Gamma_X]$ with its image $\VA_X [\Gamma]$ in $\VA [\Gamma]$ under the above homomorphism $\varphi$.

\begin{proof}[Proof of Theorem \ref{thm1_2}]
Let $X, Y \subseteq S$.
The inclusion $\VA [\Gamma_{X \cap Y}] \subseteq \VA [\Gamma_X] \cap \VA [\Gamma_Y]$ is obvious, hence we only need to prove the reverse inclusion.
Let $g \in \VA [\Gamma_X] \cap \VA [\Gamma_Y]$.
Let $w =\pi_K (g)$.
We have $w \in W [\Gamma_X] \cap W [\Gamma_Y] = W [\Gamma_{X \cap Y}]$, hence $\iota_W (w) \in \VA [\Gamma_{X \cap Y}]$.
Let $h = g\, \iota_W (w)^{-1}$.
Note that $h \in \VA [\Gamma_X] \cap \KVA [\Gamma] = \KVA [\Gamma_X]$ and $h \in \VA [\Gamma_Y] \cap \KVA [\Gamma] = \KVA [\Gamma_Y]$.
By Theorem \ref{thm2_15}, Lemma \ref{lem3_8}, and Proposition \ref{prop2_12}, we have:
\[
h \in \KVA [\Gamma_X] \cap \KVA [\Gamma_Y] = A [\hat \Gamma_{\Phi[\Gamma_X]}] \cap A [\hat \Gamma_{\Phi[\Gamma_Y]}] = A [\hat \Gamma_{ \Phi[\Gamma_X]\cap \Phi[\Gamma_Y]}]\,.
\]
From Lemma \ref{lem3_5}, we have:
\[ 
\Phi[\Gamma_X] \cap \Phi[\Gamma_Y] = \Phi [\Gamma] \cap V_X \cap V_Y = \Phi [\Gamma] \cap V_{X \cap Y} = \Phi [\Gamma_{X \cap Y}]\,.
\]
Thus, $h \in A [\hat \Gamma_{\Phi[\Gamma_{X \cap Y}]}] \subseteq \VA [\Gamma_{X \cap Y}]$, hence $g = h \, \iota_W (w) \in \VA [\Gamma_{X \cap Y}]$.
\end{proof}

As promised in the introduction, we conclude this section with a simple example of two parabolic subgroups of $\VA [\Gamma]$ whose intersection is not a parabolic subgroup.

\begin{expl}\label{expl3_9}
Let $\Gamma$ be the Coxeter graph $A_2$ illustrated in Figure \ref{fig3_1}.
We denote its vertex set by $S = \{s,t\}$. 
Let $X = \{s\}$, $Y = \{t\}$, $g = \sigma_s \sigma_t$, $P = \VA [\Gamma_Y]$, and $Q = g \VA [\Gamma_X] g^{-1}$.
We have $\pi_K (\sigma_t) = 1$, $\pi_K (\tau_t) = t$, $\pi_K (g \sigma_s g^{-1}) = 1$, and $\pi_K (g \tau_s g^{-1}) = s$, hence $\pi_K (P \cap Q) \subseteq W[\Gamma_Y] \cap W[\Gamma_X] = \{1\}$.
Note that, if the image of a parabolic subgroup of $\VA [\Gamma]$ under the homomorphism $\pi_K$ is $\{ 1 \}$, then this parabolic subgroup must be trivial.
In particular, if $P \cap Q$ were a parabolic subgroup, then we would have $P \cap Q = \{1\}$.
However, $\sigma_t = g \sigma_s g^{-1} \in P \cap Q$ and $\sigma_t \neq 1$, hence $P \cap Q$ is not a parabolic subgroup.
\end{expl}

\begin{figure}[ht!]
\begin{center}
\includegraphics[width=1.5cm]{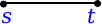}
\caption{Coxeter graph $A_2$}\label{fig3_1}
\end{center}
\end{figure}


\section{Word problem}\label{sec4}

We denote by $\AA = \SS \sqcup \SS^{-1} \sqcup \TT$ the standard generating set for $\VA [\Gamma]$, viewed as a monoid (note that $\TT^{-1} = \TT$).
The following two lemmas are preliminary results for the proof of Theorem \ref{thm1_6}.
For an element $g$ in $\KVA [\Gamma]$, they allow to move from an expression of $g$ over $\AA$ to an expression of $g$ over $\{ \delta_\beta \mid \beta \in \Phi [\Gamma]\}$, and vice versa.
Since the set $\Phi [\Gamma]$ is generally infinite, these two transitions are not immediate.
Lemma \ref{lem4_1} is proven in \cite[Proposition 5.2]{BPT23}.

\begin{lem}[Bellingeri--Paris--Thiel \cite{BPT23}]\label{lem4_1}
There exists an algorithm that, given a word $\omega$ over $\AA$ representing an element $g \in \KVA [\Gamma]$, determines:
\begin{itemize}
\item[(i)]
a finite subset $\XX \subseteq \Phi [\Gamma]$ such that $g \in A [\hat \Gamma_\XX]$;
\item[(ii)]
the Coxeter matrix $\hat M_\XX$ of $\hat \Gamma_\XX$;
\item[(iii)]
a word $\mu$ over $\{ \delta_\beta, \delta_\beta^{-1} \mid \beta \in \XX\}$ representing $g$.
\end{itemize}
\end{lem}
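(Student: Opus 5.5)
We rewrite $\omega$ as a product of conjugates of $\sigma$'s by elements of $W[\Gamma]$, reading off roots as we go. Write $\omega = a_1 a_2 \cdots a_n$ with $a_i \in \AA$. For $0 \le i \le n$, let $w_i = \pi_K(a_1 \cdots a_i) \in W[\Gamma]$. Each $w_i$ is computed as a word over $S$ by deleting the letters from $\SS \sqcup \SS^{-1}$ and replacing $\tau_s$ by $s$. Since $\overline{\omega} = g \in \KVA[\Gamma]$, we have $w_n = 1$. Inserting $\iota_W(w_{i-1})\iota_W(w_{i-1})^{-1}$ between consecutive letters gives the identity in $\VA[\Gamma]$
\[
g = \prod_{i=1}^{n} \iota_W(w_{i-1})\, a_i\, \iota_W(w_i)^{-1}\,.
\]
If $a_i = \tau_s$, then $w_i = w_{i-1} s$, so the $i$-th factor equals $1$. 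If $a_i = \sigma_s^{\pm 1}$, then $w_i = w_{i-1}$, so the factor is $\iota_W(w_{i-1})\sigma_s^{\pm1}\iota_W(w_{i-1})^{-1} = \delta_{\beta_i}^{\pm 1}$ with $\beta_i = w_{i-1}(\alpha_s)$. This is well defined by \cite[Lemma 2.2]{BPT23}.

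The algorithm is then as follows. For every $i$ with $a_i \in \SS \sqcup \SS^{-1}$, compute the coordinates of $\beta_i = w_{i-1}(\alpha_s)$ in the basis $\Pi$ by applying the canonical representation $\rho$. The coefficients lie in the number field $\K$, so equality of roots can be tested exactly. Set $\XX = \{\beta_i\}$, removing duplicates by comparing coordinate vectors, and set $\mu = \prod_i \delta_{\beta_i}^{\varepsilon_i}$. This gives (i) and (iii), since $g$ is represented by $\mu$ in $A[\hat\Gamma]$ via Theorem \ref{thm2_15}.

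The main step is (ii): computing $\hat m_{\beta,\gamma}$ for $\beta \ne \gamma$ in $\XX$. By the proof of Lemma \ref{lem3_8}, when $\hat m_{\beta,\gamma} \ne \infty$ it equals the order of $r_\beta r_\gamma$. The difficulty is deciding whether there exist $w$ and $s,t$ with $m_{s,t} \ne \infty$ such that $\beta = w(\alpha_s)$ and $\gamma = w(\alpha_t)$, since $W[\Gamma]$ may be infinite. I would characterise this through the dihedral reflection subgroup $\langle r_\beta, r_\gamma \rangle$. If $\beta = w(\alpha_s)$ and $\gamma = w(\alpha_t)$, then $w^{-1}$ sends $\{\beta,\gamma\}$ to the simple roots of the standard parabolic subgroup $W_{\{s,t\}}$, which has finite order $m_{s,t}$. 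So $\langle \beta,\gamma\rangle = -2\cos(\pi/m)$ with $m$ finite, and $\{\beta,\gamma\}$ is a canonical simple system for $\langle r_\beta, r_\gamma\rangle$. Conversely, a condition of this form should be sufficient, at least after conjugating by an explicit $w$.

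This suggests the following effective test, which is what I expect \cite[Proposition 5.2]{BPT23} to contain:
\begin{itemize}
\item[(1)] Check whether $\langle\beta,\gamma\rangle = -2\cos(\pi/m)$ for some integer $m \ge 2$; these finitely many values can be tested in $\K$. If not, set $\hat m_{\beta,\gamma} = \infty$.
\item[(2)] Otherwise, try to move the pair simultaneously to simple roots by a depth-reduction procedure. Starting from $(\beta,\gamma)$, repeatedly choose $s \in S$ with $\langle\beta,\alpha_s\rangle > 0$ or $\langle\gamma,\alpha_s\rangle > 0$ and apply $s$ to both roots. By Lemma \ref{lem3_2} this lowers the total depth, as long as neither root is $\alpha_s$ and neither image becomes negative.
\item[(3)] The procedure terminates, since depth is a non-negative integer. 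Decide $\hat m_{\beta,\gamma}$ according to whether the final pair is $(\alpha_{s'},\alpha_{t'})$ with $m_{s',t'} \ne \infty$.
\end{itemize}

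Proving correctness of this reduction is the main obstacle. It requires showing that whenever some $w$ exists, this greedy procedure finds one. I would approach it with a minimal-length argument in the spirit of Lemma \ref{lem3_4}: take $w$ minimal with $w^{-1}\beta, w^{-1}\gamma \in \Pi$ and show that a descent of $w$ is exactly a simple reflection that lowers the depths. If that fails, I would simply cite \cite[Proposition 5.2]{BPT23}.
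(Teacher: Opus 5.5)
\textbf{Verdict: there is a genuine gap, in part (ii).} Your rewriting for (i) and (iii) is correct. The telescoping product kills every $\tau$-factor and turns each $\sigma_s^{\pm1}$ into $\delta_{w_{i-1}(\alpha_s)}^{\pm1}$, with the roots compared exactly in $\K$. The paper gives no proof of this lemma and simply quotes \cite[Proposition 5.2]{BPT23}, so your fallback is exactly what the paper does. The argument you sketch for (ii), however, does not work.

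\textbf{The descent procedure fails.} The claim in step (2) that the move lowers the total depth is false. By Lemma \ref{lem3_2}, if $\langle\beta,\alpha_s\rangle>0$ but $\langle\gamma,\alpha_s\rangle<0$, the depth of $\gamma$ rises by one and the total is unchanged, so step (3) has no termination argument. Take $\Gamma=A_3$, realised by $\alpha_a=e_1-e_2$, $\alpha_b=e_2-e_3$, $\alpha_c=e_3-e_4$, and set $\beta=e_1-e_4$, $\gamma=e_2-e_3$. The permutation $1\mapsto1$, $2\mapsto4$, $3\mapsto2$, $4\mapsto3$ sends $\alpha_a,\alpha_c$ to $\beta,\gamma$, so $\hat m_{\beta,\gamma}=2$.
\begin{itemize}
\item Your rule allows $a$, giving $\{e_2-e_4,e_1-e_3\}$ with total depth still $4$. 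It then allows $c$, which returns to $\{\beta,\gamma\}$. So the procedure may cycle forever.
\item The natural repair, allowing only moves that strictly lower total depth, gives a wrong answer instead. Here $a$ and $c$ keep the total at $4$, and $b$ makes $\gamma=\alpha_b$ negative. So the procedure halts at a non-simple pair and outputs $\infty$.
\end{itemize}
The simple pair is reachable only through a depth-preserving move ($a$, then $b$). So the completeness statement you flag as the main obstacle is not just unproved: its natural versions are false.

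\textbf{Negative roots are not handled.} Depth is only defined on $\Phi^+[\Gamma]$, and your procedure aborts when a root becomes negative. But $\XX$ contains negative roots in general: your own rewriting gives $\tau_s\sigma_s\tau_s=\delta_{-\alpha_s}$. Mixed-sign pairs can also carry finite labels. In $A_2$, $w=ts$ sends $\alpha_t\mapsto\alpha_s$ and $\alpha_s\mapsto-\alpha_s-\alpha_t$, so $\hat m_{\alpha_s,-\alpha_s-\alpha_t}=3$. A smaller point: in step (1) only the finitely many values $-2\cos(\pi/m_{s,t})$ with $m_{s,t}\neq\infty$ are relevant, not all $m\ge 2$.

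\textbf{A route that works.} If you want a self-contained proof of (ii), replace descent by a finiteness argument.
\begin{itemize}
\item If $\langle\beta,\gamma\rangle$ is not one of the values above, then $\hat m_{\beta,\gamma}=\infty$, because the form is $W[\Gamma]$-invariant.
\item Otherwise the form is positive definite on $\R\beta+\R\gamma$. So $r_\beta r_\gamma$ is a rotation of finite order on that plane and the identity on its orthogonal complement, and $\langle r_\beta,r_\gamma\rangle$ is finite.
\item By Tits' theorem, this finite subgroup is conjugate into a finite standard parabolic subgroup. Enumerating $v\in W[\Gamma]$ therefore eventually finds $v$ and $Z\subseteq S$ with $W[\Gamma_Z]$ finite and $v^{-1}(\beta),v^{-1}(\gamma)\in V_Z$. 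This works because a reflection $r_\delta\in W[\Gamma_Z]$ has $\delta\in V_Z$, since $u(x)-x\in V_Z$ for all $u\in W[\Gamma_Z]$.
\item Then Lemmas \ref{lem3_5} and \ref{lem3_7} show that $\hat m_{\beta,\gamma}=\hat m_{v^{-1}(\beta),v^{-1}(\gamma)}$ is decided by a finite search over $u\in W[\Gamma_Z]$ and $x,y\in Z$.
\end{itemize}
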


\begin{lem}\label{lem4_2}
There exists an algorithm that, given $\beta \in \Phi [\Gamma]$, determines a word $\eta$ over $S$ and an element $s \in S$ such that $\beta = \overline{\eta} (\alpha_s)$, where $\overline{\eta}$ denotes the element of $W [\Gamma]$ represented by $\eta$.
\end{lem}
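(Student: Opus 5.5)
We will reduce $\beta$ to a simple root by a descending depth procedure, recording the simple reflections used. Write $\beta = \sum_{s \in S} \lambda_s \alpha_s$ with coefficients in the number field $\K$, so that all computations (evaluating $\langle \cdot , \cdot \rangle$, applying $\rho(s)$, comparing vectors) are exact and algorithmic. Replacing $\beta$ by $-\beta$ if necessary, we may assume $\beta \in \Phi^+ [\Gamma]$. This is decidable by looking at the sign of any nonzero coefficient, since every root is either positive or negative. If $\beta$ was negative, we will at the end replace $\eta$ by $\eta s$. Indeed, $s(\alpha_s) = -\alpha_s$, so $-\beta = \overline{\eta}(\alpha_s)$ gives $\beta = \overline{\eta s}(\alpha_s)$.

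The procedure for $\beta \in \Phi^+ [\Gamma]$ is as follows. First test whether $\beta = \alpha_s$ for some $s \in S$; if so, output the empty word $\eta$ together with $s$. Otherwise, search over the finitely many $s \in S$ for one satisfying $\langle \beta, \alpha_s \rangle > 0$, and replace $\beta$ by $s(\beta) = \beta - \langle \beta, \alpha_s \rangle \alpha_s$.

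Such an $s$ always exists. Since $\beta$ is positive and not simple, $\dpt(\beta) \ge 2$. By Remark \ref{rem3_3} there is some $s \in S$ with $\dpt(s(\beta)) < \dpt(\beta)$, and by Lemma \ref{lem3_2} this forces $\langle \beta, \alpha_s \rangle > 0$. Conversely, for any $s$ with $\langle \beta, \alpha_s \rangle > 0$, we have $\beta \neq \alpha_s$ (the root is not simple), so Lemma \ref{lem3_2} gives $\dpt(s(\beta)) = \dpt(\beta) - 1$. It also gives $s(\beta) \in \Phi^+ [\Gamma]$, because $s$ permutes $\Phi^+ [\Gamma] \setminus \{\alpha_s\}$, which is consistent with depth being defined only on positive roots.

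Iterating this step produces a sequence $s_1, s_2, \dots, s_k$ with $s_k \cdots s_1 (\beta) = \alpha_s$ for some $s \in S$. Since depth strictly decreases at each step, the process terminates after exactly $\dpt(\beta) - 1$ steps. We then output $\eta = s_1 s_2 \cdots s_k$ and this $s$. Because each $s_i$ is an involution, $\overline{\eta}(\alpha_s) = s_1 \cdots s_k (\alpha_s) = \beta$.

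The only real point is termination and correctness of the greedy choice. Termination is guaranteed by the Brink--Howlett depth formula (Lemma \ref{lem3_2}) together with Remark \ref{rem3_3}: the algorithm never needs to know $\dpt(\beta)$ in advance, only the sign of $\langle \beta, \alpha_s \rangle$, which is computable exactly in $\K$. The input hypothesis that $\beta$ is genuinely a root is essential here, since otherwise no such $s$ need exist and the loop could fail.
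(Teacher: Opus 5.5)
Your proposal is correct and follows essentially the same route as the paper. You reduce to a positive root, repeatedly apply a simple reflection $s$ with $\langle \beta, \alpha_s \rangle > 0$ (which lowers the depth by one by Lemma~\ref{lem3_2}) until a simple root is reached, and for a negative root you append $s$ to the word obtained for $-\beta$; your iterative loop is just the unrolled form of the paper's induction on $\dpt(\beta)$.
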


\begin{proof}
Let $\beta \in \Phi [\Gamma]$.
First, assume that $\beta \in \Phi^+ [\Gamma]$.
We determine a word $\eta \in S^*$ and an element $s \in S$ such that $\beta = \overline{\eta} (\alpha_s)$ by induction on $\dpt (\beta)$.
If $\dpt (\beta) = 1$, then there exists $s \in S$ such that $\beta = \alpha_s$.
In this case, it suffices to set $\eta = 1$.
Suppose that $\dpt (\beta) \ge 2$, and that the induction hypothesis holds. 
By testing each element of $S$, we can find $t \in S$ such that $\langle \beta, \alpha_t \rangle > 0$.
Recall that $S$ is finite, hence this operation can be performed in finitely many steps.
By Lemma \ref{lem3_2}, it follows that $\dpt(t(\beta)) = \dpt(\beta)-1$.
By the induction hypothesis, we can determine a word $\eta' \in S^*$ and an element $s \in S$ such that $t(\beta) = \overline{\eta'}(\alpha_s)$.
Then $\beta = \overline{\eta}(\alpha_s)$, where $\eta = t \eta'$.

Now, suppose that $\beta \in \Phi^- [\Gamma]$.
From the argument above, we can find a word $\eta' \in S^*$ and an element $s \in S$ such that $- \beta = \overline{\eta'} (\alpha_s)$.
Then $\beta = \overline{\eta} (\alpha_s)$, where $\eta = \eta' s$.
\end{proof}

\begin{proof}[Proof of Theorem \ref{thm1_6}]
We assume that the word problem in $\VA [\Gamma]$ is solvable.
Let $X \subseteq S$.
Let $\SS_X = \{ \sigma_x \mid x \in X\}$, $\TT_X = \{ \tau_x \mid x \in X \}$, and $\AA_X = \SS_X \sqcup \SS_X^{-1} \sqcup \TT_X$.
Let $\omega_0 \in \AA^*$.
Or goal is to decide whether $\overline{\omega_0} \in \VA [\Gamma_X]$, and if so, to determine a word $\mu_0 \in \AA_X^*$ such that $\overline{\omega_0} = \overline{\mu_0}$.
We set $g_0 = \overline{\omega_0} \in \VA [\Gamma]$.

Let $\pi_K^* : \AA^* \to S^*$ be the monoid homomorphism that maps $\sigma_s$ and $\sigma_s^{-1}$ to $1$ and $\tau_s$ to $s$ for all $s \in S$.
It is clear that $\overline{\pi_K^* (\omega_0)} = \pi_K (g_0)$ in $W [\Gamma]$.
By Proposition \ref{prop2_5}, we can decide whether $\overline{\pi_K^* (\omega_0)} = \pi_K (g_0) \in W [\Gamma_X]$, and if so, determine a word $\eta_0 \in X^*$ such that $\overline{\eta_0} = \pi_K (g_0)$.
If $\pi_K (g_0) \not \in W [\Gamma_X]$, then $g_0 \not \in \VA [\Gamma_X]$, and the algorithm stops here.

So, we can assume that $\pi_K (g_0) \in W [\Gamma_X]$ and that we have obtained a word $\eta_0 \in X^*$ such that $\overline{\eta_0} = \pi_K (g_0)$.
Let $\iota_W^* : S^* \to \AA^*$ be the monoid homomorphism defined by $\iota_W^* (s) = \tau_s$ for all $s \in S$.
Let $\omega_1 = \omega_0 \, \iota_W^*(\eta_0)^{-1}$ and let $g_1 = \overline{\omega_1}$. 
Note that $\iota_W^* (\eta_0) \in \AA_X^*$ and $\pi_K (g_1) = 1$, that is, $g_1 \in \KVA [\Gamma]$.
Note also that $g_0 \in \VA [\Gamma_X]$ if and only if $g_1 \in \KVA [\Gamma] \cap \VA [\Gamma_X] = \KVA[\Gamma_X]$.

Applying the algorithm from Lemma \ref{lem4_1} to the word $\omega_1$, we obtain:
\begin{itemize}
\item[(i)]
a finite subset $\XX \subseteq \Phi [\Gamma]$ such that $g_1 \in A [\hat \Gamma_\XX]$;
\item[(ii)]
the Coxeter matrix $\hat M_\XX$ of $\hat \Gamma_\XX$;
\item[(iii)]
a word $\nu_1$ over $\{ \delta_\beta, \delta_\beta^{-1} \mid \beta \in \XX\}$ representing $g_1$.
\end{itemize}
Furthermore, using the algorithm from Corollary \ref{corl3_6}, we can determine $\YY = \XX \cap \Phi [\Gamma_X]$.
Note that, by Proposition \ref{prop2_12}, we have $A [\hat \Gamma_\XX] \cap A [\hat \Gamma_{\Phi[\Gamma_X]}] = A [\hat \Gamma_\YY]$.
In particular, we have $g_1 \in \KVA [\Gamma_X] = A [\hat \Gamma_{\Phi [\Gamma_X]}]$ if and only if $g_1 \in A [\hat \Gamma_\YY]$.

The word problem in $A[\hat \Gamma_\XX$] can be solved as follows.
For each $\beta \in \XX$, we apply the algorithm of Lemma \ref{lem4_2} to determine a word $\kappa \in S^*$ and an element $s \in S$ such that $\beta = \overline{\kappa} (\alpha_s)$, and we set $\xi (\beta) = \iota_W^* (\kappa) \sigma_s \iota_W^* (\kappa)^{-1} \in \AA^*$.
Let $\xi^* : \{ \delta_\beta, \delta_\beta^{-1} \mid \beta \in \XX \}^* \to \AA^*$ be the monoid homomorphism that maps $\delta_\beta^{\pm 1}$ to $\xi (\beta)^{\pm 1}$.
Let $\nu$ be a word over $\{ \delta_\beta, \delta_\beta^{-1} \mid \beta \in \XX \}$.
By construction, we have $\overline{\nu} = \overline{\xi^* (\nu)}$.
So, using the word problem in $\VA [\Gamma]$, we can decide whether $\overline{\nu} =1$ or not.

By Proposition \ref{prop2_13}, we can decide whether $g_1 \in A [\hat \Gamma_\YY]$, and if so, determine a word $\nu_1' \in \{ \delta_\beta, \delta_\beta^{-1} \mid \beta \in \YY \}^*$ representing $g_1$.
If $g_1 \not \in A [\hat \Gamma_\YY]$, then $g_0 \not \in \VA [\Gamma_X]$, and the algorithm stops here.
If $g_1 \in A [\hat \Gamma_\YY]$, then $g_0 \in \VA [\Gamma_X]$, and we still need to determine a word $\mu_0 \in \AA_X^*$ representing $g_0$.

So, we can assume that $g_1 \in A [\hat \Gamma_\YY]$ and that we have obtained a word $\nu_1' \in \{ \delta_\beta, \delta_\beta^{-1} \mid \beta \in \YY \}^*$ representing $g_1$.
We define the map $\xi_X^* : \{ \delta_\beta, \delta_\beta^{-1} \mid \beta \in \YY \}^* \to \AA_X^*$ by restricting the previous construction of $\xi$ to $\YY$ and $\AA_X^*$ as follows.
For each $\beta \in \YY \subseteq \Phi[\Gamma_X]$, we apply Lemma \ref{lem4_2} to determine a word $\kappa \in X^*$ and an element $x \in X$ such that $\beta = \overline{\kappa} (\alpha_x)$, and we set $\xi_X (\beta) = \iota_W^* (\kappa) \sigma_x \iota_W^* (\kappa)^{-1} \in \AA_X^*$.
Let $\xi_X^* : \{ \delta_\beta, \delta_\beta^{-1} \mid \beta \in \YY \}^* \to \AA_X^*$ be the monoid homomorphism that maps $\delta_\beta^{\pm 1}$ to $\xi_X (\beta)^{\pm 1}$ for each $\beta \in \YY$.
By construction, for any word $\nu \in \{ \delta_\beta, \delta_\beta^{-1} \mid \beta \in \YY \}^*$, we have $\overline{\nu} = \overline{\xi_X^* (\nu)}$.
In particular, $\xi_X^* (\nu_1')$ is a word over $\AA_X$ representing $g_1 = \overline{\omega_1}$.
Now, set $\mu_0 = \xi_X^* (\nu_1')\,\iota_W^* (\eta_0)$.
Then $\mu_0$ is a word over $\AA_X$ representing $g_0$.
\end{proof}

\begin{proof}[Proof of Theorem \ref{thm1_4}]
We assume that the word problem in $\VA[\Gamma_X]$ is solvable for each free of infinity subset $X \subseteq S$, and we show that the word problem in $\VA[\Gamma]$ is solvable.
We argue by induction on the number $e_\infty(\Gamma)$ of edges of $\Gamma$ labeled with $\infty$.
If $e_\infty(\Gamma) = 0$, then $S$ itself is free of infinity, hence the word problem in $\VA[\Gamma]$ is solvable by assumption.
Now, assume that $e_\infty(\Gamma) \ge 1$, and that the induction hypothesis holds.
Choose $s,t \in S$ such that $s \neq t$ and $m_{s,t} = \infty$.
Let $X = S \setminus \{s\}$, $Y = S \setminus \{t\}$, and $Z = S \setminus \{s,t\}$.
From the presentation of $\VA [\Gamma]$, it follows that:
\[
\VA [\Gamma] = \VA[\Gamma_X] *_{\VA [\Gamma_Z]} \VA [\Gamma_Y]\,.
\]
By the induction hypothesis, the word problem is solvable in both $\VA [\Gamma_X]$ and $\VA [\Gamma_Y]$. 
By Theorem \ref{thm1_6}, it follows that the strong membership problem for $\VA[\Gamma_Z]$ in $\VA[\Gamma_X]$ and the strong membership problem for $\VA[\Gamma_Z]$ in $\VA[\Gamma_Y]$ are solvable.
By Proposition \ref{prop2_18}, we conclude that the word problem in $\VA[\Gamma]$ is solvable.
\end{proof}


\end{document}